\theoremstyle{plain}
\newtheorem{thm}{Theorem}[section]
\newtheorem{prop}[thm]{Proposition}
\newtheorem{lem}[thm]{Lemma}
\newenvironment{conj*}[1]
{\theoremvar}
{\endtheoremvar}
\theoremstyle{definition}
\newtheorem{defn}[thm]{Definition}
\newtheorem{example}[thm]{Example}
\theoremstyle{remark}
\newtheorem{rem}[thm]{Remark}
\numberwithin{equation}{section}
\numberwithin{thm}{section}
\DeclareMathOperator{\Hom}{Hom}
\DeclareMathOperator{\Har}{Har}
\DeclareMathOperator{\End}{End}
\DeclareMathOperator{\Aut}{Aut}
\DeclareMathOperator{\Ht}{ht}
\DeclareMathOperator{\Frob}{Frob}
\DeclareMathOperator{\Spec}{Spec}
\DeclareMathOperator{\chr}{char}
\DeclareMathOperator{\ord}{ord}
\DeclareMathOperator{\Gal}{Gal}
\DeclareMathOperator{\GL}{GL}
\DeclareMathOperator{\PGL}{PGL}
\DeclareMathOperator{\SL}{SL}
\DeclareMathOperator{\Div}{Div}
\DeclareMathOperator{\Stab}{Stab}
\newcommand{\sep}{\mathrm{sep}}
\newcommand{\alg}{\mathrm{alg}}
\newcommand{\tor}{\mathrm{tor}}
\newcommand{\new}{\mathrm{new}}
\newcommand{\dd}{\mathrm{d}}
\newcommand{\Ver}{\mathrm{Ver}}
\newcommand{\Ed}{\mathrm{Ed}}
\newcommand{\fE}{\mathfrak{E}}
\newcommand{\fM}{\mathfrak{M}}
\newcommand{\fP}{\mathfrak{P}}
\newcommand{\fl}{\mathfrak{l}}
\newcommand{\fm}{\mathfrak{m}}
\newcommand{\fn}{\mathfrak{n}}
\newcommand{\fp}{\mathfrak{p}}
\newcommand{\fq}{\mathfrak{q}}
\newcommand{\cB}{\mathcal{B}}
\newcommand{\cC}{\mathcal{C}}
\newcommand{\cE}{\mathcal{E}}
\newcommand{\cG}{\mathcal{G}}
\renewcommand{\cH}{\mathcal{H}}
\newcommand{\cI}{\mathcal{I}}
\newcommand{\cJ}{\mathcal{J}}
\renewcommand{\cL}{\mathcal{L}}
\newcommand{\cM}{\mathcal{M}}
\newcommand{\cO}{\mathcal{O}}
\newcommand{\cS}{\mathcal{S}}
\newcommand{\cT}{\mathcal{T}}
\newcommand{\cU}{\mathcal{U}}
\newcommand{\cY}{\mathcal{Y}}
\newcommand{\sB}{\mathscr{B}}
\newcommand{\sT}{\mathscr{T}}
\newcommand{\A}{\mathbb{A}}
\newcommand{\C}{\mathbb{C}}
\newcommand{\F}{\mathbb{F}}
\newcommand{\gm}{\mathbb{G}}
\newcommand{\N}{\mathbb{N}}
\newcommand{\p}{\mathbb{P}}
\newcommand{\Q}{\mathbb{Q}}
\newcommand{\R}{\mathbb{R}}
\newcommand{\T}{\mathbb{T}}
\newcommand{\Z}{\mathbb{Z}}
\newcommand{\boz}{\boldsymbol{z}}
\newcommand{\G}{\Gamma}
\newcommand{\To}{\longrightarrow}
\newcommand{\bs}{\setminus}
\newcommand{\Fi}{F_\infty}
\newcommand{\Ci}{\C_\infty}
\newcommand{\oF}{\overline{\F}}
\newcommand{\oK}{\overline{K}}
\newcommand{\La}{\Lambda}
\newcommand{\la}{\lambda}
\newcommand{\twist}[1]{#1\!\left\{\tau\right\}}
\newcommand{\atwist}[2]{#1\left\{#2\right\}}
\newcommand{\ls}[2]{#1\!\left(\mkern-4mu\left(#2\right)\mkern-4mu\right)}
\newcommand{\Mod}[1]{\ (\mathrm{mod}\ #1)}
\newcommand{\abs}[1]{\left|#1\right|}
\newcommand{\longhookrightarrow}{\lhook\joinrel\longrightarrow}
\DeclareMathOperator{\Cot}{\mathrm{Cot}} 
\DeclareMathOperator{\spe}{\mathrm{spe}} 
\begin{document}
\title{Ogg's conjectures over function fields}

\author{C\'ecile Armana}
\address{Universit\'{e} de Franche-Comté, CNRS, LmB (UMR 6623), F-25000 Besan\c{c}on, France}
\email{cecile.armana@univ-fcomte.fr}

\author{Sheng-Yang Kevin Ho}
\address{Department of Mathematics, Pennsylvania State University, University Park, Pennsylvania, United States of America}
\email{kevinho@psu.edu}

\author{Mihran Papikian}
\address{Department of Mathematics, Pennsylvania State University, University Park, Pennsylvania, United States of America}
\email{papikian@psu.edu}

\thanks{The first author was supported by ANR -- France (French National Research Agency), PadLEfAn project ANR-22-CE40-0013. The third author was supported in part by a grant from the Simons Foundation, Award No. MPS-TSM-00008093.}

\subjclass[2010]{11G09, 11G05}
\keywords{Drinfeld modules; Elliptic curves}

\dedicatory{Dedicated to Andrew Ogg}



\maketitle


\section{Introduction} 

In the early 1970s, Andrew Ogg made several conjectures about the rational torsion points of elliptic curves over $\Q$ and 
the Jacobians of modular curves; cf. \cite{Ogg1970},  \cite{Ogg1973}, \cite{Ogg1975}. In particular, he proposed a finite list 
of groups that can occur as rational torsion subgroups of elliptic curves over $\Q$. These conjectures were proved shortly after by Barry Mazur \cite{Mazur1977}, \cite{Mazur1978} as a consequence of his fundamental study of the arithmetic properties of modular curves and Hecke algebras. The powerful techniques introduced by Mazur had tremendous impact on later developments in arithmetic geometry; for example, they have been instrumental in the proof of the Main Conjecture of Iwasawa theory \cite{MW} and the proof of Fermat's Last Theorem \cite{Wiles}, \cite{Ribet1990}.

At around the same time, Vladimir Drinfeld introduced certain function field analogues of elliptic curves, nowadays called \textit{Drinfeld modules}. 
In fact, due to this analogy, in his seminal paper \cite{Drinfeld} 
Drinfeld called these objects ``elliptic modules''. 
Drinfeld's motivation was to construct function field analogues 
of classical modular curves classifying elliptic curves with some additional data, and use these to relate automorphic forms 
and Galois representations, in line with the program envisioned by Langlands. 
The theory of Drinfeld modules and their generalizations, called \textit{shtukas}, has since lead to a successful resolution 
of the Langlands correspondence 
over function fields, cf. \cite{DrinfeldICM}, \cite{LRS}, \cite{Lafforgue}, \cite{VLafforgue}, and   
it continues to play a central role in number theory because of its applications to many other important problems, 
such as the Birch and Swinnerton-Dyer conjecture, cf. \cite{YunZhang}. 

Due to the close similarity between elliptic curves and Drinfeld modules, Ogg conjectures have natural analogues in the function field setting, as was already suggested by Mazur in \cite{Mazur1977}. 
As in the classical case, a comprehensive approach to these conjectures heavily relies on the 
theory of (Drinfeld) modular forms and modular varieties. Since some of the necessary aspects of this theory were developed only in the 1990s, the analogues of Ogg's conjectures were stated and proved in certain cases only in the 2000s; cf. \cite{SchweizerMZ}, \cite{PalDoc}, \cite{ArmanaANT}. 

In this paper, we review the function field analogues of Ogg's conjectures, their current status, and the methods that have been applied to prove some of these conjectures. The methods are 
based on the ideas of Mazur and Ogg, but there are interesting differences and technical complications 
that arise in the function field setting, as well as intriguing possible new directions for generalizations. 

The contents of the paper are as follows. 
In Section \ref{sOggConj}, we recall the statements of Ogg's original conjectures, some of their generalizations, and what is currently known about these conjectures (Ogg's conjectures have been generalized in many different directions, and our exposition of these generalizations is by no means exhaustive). 
In Section \ref{sDM}, we review the basic theory 
of Drinfeld modules, putting the emphasis on those aspects of the theory that are relevant for Ogg's conjectures. In Section \ref{sDMF}, we review the theory of Drinfeld modular forms, which is 
extensively used in the proofs of the analogues of Ogg's conjectures.  
In Sections \ref{sDMT} and \ref{sDJT=C}, we give the 
statements of Ogg's conjectures in the setting of Drinfeld's theory, the current status of these conjectures, a brief summary of the ideas that go into the proofs, and some of the nontrivial 
complications that arise in this setting. 


\section{Ogg's conjectures}\label{sOggConj}

\subsection{Torsion of elliptic curves} 
Let $N$ be a positive integer and let $\G_0(N)$ (resp. $\G_1(N)$) be the congruence subgroup 
of $\SL_2(\Z)$ consisting of matrices that are upper-triangular (resp. are unipotent) modulo $N$. 
Let $Y_i(N)(\C):=\G_i(N)\bs \cH$, $i=0,1$, be the affine modular curve, where $\cH$ is the complex upper half-plane and $\SL_2(\Z)$ acts on $\cH$ by linear fractional transformations. Denote by $X_i(N)(\C)$ the compactification of $Y_i(N)(\C)$ obtained by adjoining finitely many points, called \textit{cusps}. 
The curve $X_i(N)(\C)$ has a canonical non-singular projective model $X_i(N)$ defined over $\Q$, and in this model the set of cusps is invariant under the action of $\Gal(\overline{\Q}/\Q)$. 

The first, and probably most famous, conjecture in Ogg's paper \cite{Ogg1975} 
gives a classification of possible torsion subgroups $E(\Q)_\tor$ of elliptic curves over $\Q$. 
This conjecture was essentially formulated by Beppo Levi in 1908, although Ogg was not aware of this as  Levi's work on the arithmetic of elliptic curves did not receive the attention it deserved. 
In any case, Ogg's papers were instrumental in spelling out the close connection between this problem 
and the theory of modular curves and in popularizing the conjecture.  

\begin{conj*}{TEC}\label{conjOgg1}
If $E$ is an elliptic curve 
over $\Q$ then its rational torsion subgroup $E(\Q)_\tor$ is one of the following fifteen groups:
$$
\Z/N\Z, \quad 1\leq N\leq 10 \text{ or } N=12;
$$
$$
\Z/2\Z\times \Z/2N\Z, \quad 1\leq N\leq 4. 
$$
This is essentially equivalent to saying that the modular curve 
$X_1(N)$ has no $\Q$-rational points besides cusps, unless $1\leq N\leq 10$ and $N=12$. 
\end{conj*}

A related conjecture, stated in \cite{Ogg1975}  as a problem, concerns rational cyclic subgroups of elliptic curves over $\Q$. If $C$ is a finite subgroup of an elliptic curve $E$ over $\Q$, then $C$ is said to be rational if $\sigma(C)=C$ for all $\sigma\in \Gal(\overline{\Q}/\Q)$.  

\begin{conj*}{TEC$^+$}\label{conjOgg2}
A rational cyclic subgroup of an elliptic curve over $\Q$ has order $1\leq N\leq 19$, or $N=21, 25, 27, 37, 43, 67, 163$. Equivalently, the modular curve $X_0(N)$ has no $\Q$-rational points besides cusps, unless $N$ is one of the listed values. 
\end{conj*}

The philosophy behind Conjectures \ref{conjOgg1} and \ref{conjOgg2} is that ``modular curves only have rational points for which there is a reason", the reason being geometric; cf. \cite[p. 17]{Ogg1975}.
More specifically, $Y_1(N)$ has a $\Q$-rational point if and only if $X_1(N)$ has genus $0$. The 
genus of $X_1(N)$ is $0$ exactly for $1\leq N\leq 10$ and $N=12$. Moreover, for these values $X_1(N)$ has infinitely many $\Q$-rational points, so the groups listed in Conjectures \ref{conjOgg1}  
occur as the rational torsion subgroups of infinitely many non-isomorphic elliptic curves over $\Q$. 
Similarly, $X_0(N)(\Q)$ is infinite if and only if its genus is $0$, which happens exactly 
for $1\leq N\leq 10$, and $N=12,13,16,18,25$. The curve $X_0(N)$ has genus $1$ and $Y_0(N)(\Q)$ 
is finite and known for $N=11,14,15,17,19,21,27$. The cases when $X_0(N)$ has genus $\geq 2$ and 
$Y_0(N)(\Q)$ is non-empty are $N=37, 43, 67, 163$. 
For $N=43, 67, 163$, the curve $Y_0(N)$ has one $\Q$-rational point which corresponds to an elliptic curve over $\Q$ with CM by the ring of integers of $\Q(\sqrt{-N})$ (note that in these cases the class number of $\Q(\sqrt{-N})$ is one). The case $N=37$ is somewhat special: $Y_0(37)$ has two $\Q$-rational points, whose existence is related to the fact that the hyperelliptic involution of $X_0(37)$ is not the Atkin-Lehner involution. 
 	
Conjectures \ref{conjOgg1} and \ref{conjOgg2} were proved by Mazur in \cite{Mazur1977}, \cite{Mazur1978}, where he developed an intricate arithmetic theory of modular curves, Eisenstein ideals in Hecke algebras, and isogeny characters of elliptic curves. 

By extending Mazur's techniques, one can attack the problem of classifying the points on $X_1(N)$ and $X_0(N)$ that are rational over 
number fields of given degree; cf. \cite{Kamienny}, \cite{DKSS}. One can also 
consider rational points on Shimura curves classifying abelian surfaces with 
quaternionic multiplication; cf. \cite{Jordan}, \cite{AM}. 

The Uniform Boundlessness Conjecture (UBC) extends Conjecture \ref{conjOgg1} to all number fields, but without giving an explicit classification. This conjecture is mentioned in \cite{Ogg1975} as a ``folklore conjecture": 

\begin{conj*}{UBC}\label{conjUBC}
	The orders of rational torsion subgroups of elliptic curves over a number field $K$ are uniformly bounded by a constant depending only on the degree $[K:\Q]$.  
\end{conj*}

The \ref{conjUBC} was proved by Merel \cite{Merel}, again building upon Mazur's work and subsequent refinements by Kamienny, and combining them with the recent at the time progress on the Birch and Swinnerton-Dyer conjecture.  

Finally, one can extend Conjectures \ref{conjOgg1} and \ref{conjOgg2} to modular curves classifying elliptic curves with 
other level structures, such as split or non-split Cartan subgroups. These conjectures are closely related to Serre's Uniformity Question (SUQ): 

\begin{conj*}{SUQ}\label{conjSUQ}
	The Galois representation $\rho_{E, p}\colon \Gal(\overline{\Q}/\Q)\to \Aut(E[p])\cong \GL_2(\F_p)$, arising from the action 
	of the Galois group on the $p$-torsion subgroup of an elliptic curve over~$\Q$ without complex multiplication, is surjective once $p>37$. 
\end{conj*}

Although the \ref{conjSUQ} remains open, substantial progress has been made: it is known that for
$p > 37$ the representation $\rho_{E, p}$ is either surjective or has image contained in the normaliser of a non-split Cartan subgroup; cf. \cite{BP}, \cite{BPR2013}, \cite{BDM}. 


\subsection{Torsion of the Jacobian variety of $X_0(N)$}  
Let $J_0(N)$ be the Jacobian variety of the modular curve $X_0(N)$. 
Let $\cC_N$ be the subgroup of $J_0(N)$ generated by the divisor classes $[c]-[c']$, where $[c]$ and $[c']$ run over the cusps of $X_0(N)$. By a result of Manin and Drinfeld \cite{DrinfeldTwoThms}, $\cC_N$ is a finite group. Denote by $\cC_N(\Q)$ the subgroup of $\cC_N$ fixed by the Galois action, so $\cC_N(\Q)\subseteq J_0(N)(\Q)_\tor$. Let $\cC(N)$ be the \textit{rational cuspidal divisor class group} of $X_0(N)$; this is the subgroup of $\cC_N(\Q)$ generated by the linear equivalence classes of degree $0$ rational cuspidal divisors on $X_0(N)$.

When $N=p$ is prime, there are only two cusps on $X_0(p)$, usually denoted by $[0]$ and $[\infty]$. 
In \cite{Ogg1973}, Ogg computed that the divisor class of $[0]-[\infty]$ in $J_0(p)$ has order 
$$
n=(p-1)/\gcd(p-1, 12). 
$$ 
Both cusps are rational, so $\cC(p)=\cC_p(\Q)=\cC_p$. Based on numerical experimentations, Ogg conjectured in \cite{Ogg1975} that 
\begin{conj*}{CJ-p}\label{conjCJ-p} The cyclic group $\cC(p)$ is the full torsion subgroup of $J_0(p)(\Q)$. 
\end{conj*}

Mazur proved this conjecture in \cite{Mazur1977} using his theory of the Eisenstein ideal of the Hecke algebra of level $p$. There is a natural generalization of this conjecture to arbitrary $N$, nowadays called \textit{generalized Ogg conjecture}: 

\begin{conj*}{CJ-N}\label{conjCJ-N} For arbitrary $N\geq 1$, we have $\cC(N)=\cC_N(\Q)=J_0(N)(\Q)_\tor$.
\end{conj*}

Note that Conjecture \ref{conjCJ-N} is a combination of two conjectures. The first is the equality $\cC(N)=\cC_N(\Q)$, which concerns only the cuspidal divisor group; this was explicitly conjectured by Hwajong Yoo in \cite{YOO_rational_2023} in response to a question of Ribet. 
The second is the equality $\cC_N(\Q)=J_0(N)(\Q)_\tor$, which predicts that all $\Q$-rational torsion points on $J_0(N)$ arise from cuspidal divisors, so there are no ``unexpected" $\Q$-rational torsion points on $J_0(N)$. 

Determining the structure of the cuspidal divisor group for general $N$ is quite complicated. Recently, extending an earlier work of Lorenzini, Ling, Takagi, and others, the structure of $\cC(N)$ has been completely determined by Yoo \cite{YOO_rational_2023}. Yoo's proof combines a careful study of modular units with explicit construction of appropriate rational cuspidal divisors. Consequently, if Conjecture \ref{conjCJ-N} is true, then the structure of $J_0(N)(\Q)_\tor$ is determined as well. 

The conjecture $\cC(N)=\cC_N(\Q)$ is known in some cases, but not in general. 
Note that for square-free $N$, we trivially have $\cC(N)=\cC_N(\Q)=\cC_N$ because all the cusps are rational. The known non-trivial cases are essentially those $N$ whose prime decomposition contains at most two prime factors with higher powers;  the interested reader 
may consult the introduction of \cite{YOO_rational_2023} for a list of known cases. 

Given a finite abelian group $G$ and a prime $\ell$, we denote by $G_\ell$ the $\ell$-primary subgroup of~$G$. Note that the equality 
\begin{equation}\label{eqC=J}
\cC(N)_\ell = (J_0(N)(\Q)_\tor)_\ell 
\end{equation}
implies $\cC(N)_\ell=\cC_N(\Q)_\ell$. The strongest result towards the equality \eqref{eqC=J} is the following recent result of Yoo \cite{YOO_torsion_2023}: 

\begin{thm}\label{thmYoo} Let $N$ be a positive integer. Let $\ell$ be an odd prime whose square does not divide~$N$. If $\ell \geq 5$, then \eqref{eqC=J} holds. If $\ell=3$, then \eqref{eqC=J} holds 
under the additional assumption that either $3\nmid N$, or $N$ has a prime divisor congruent to $-1$ modulo $3$.
\end{thm}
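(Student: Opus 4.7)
The plan is to follow the Mazur--Ohta--Ribet strategy via the Eisenstein ideal of the Hecke algebra, now carried out uniformly in the level $N$. Let $\T \subset \End(J_0(N))$ be the full Hecke algebra, and let $P \in J_0(N)(\Q)_\tor$ be an $\ell$-primary point; the goal is to show $P \in \cC(N)$.

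The first step is to extract an Eisenstein congruence from rationality alone. For any prime $p \nmid N\ell$, reduction modulo $p$ is injective on $\ell$-power torsion, and the Eichler--Shimura congruence $T_p = \Frob_p + p\cdot \Frob_p^{\vee}$ on $J_0(N)_{\F_p}$, applied to the Frobenius-fixed point $P$, forces $T_p(P) = (p+1)P$. A parallel analysis at primes $q \mid N$, using the Atkin--Lehner involutions together with the hypothesis $\ell^2 \nmid N$ (which makes inertia at $q$ act tamely on the mod-$\ell$ representation cut out by $P$), shows that $U_q$ and $w_q$ also act on $P$ in an Eisenstein fashion. Together these give $\cI_\ell \cdot P = 0$, where $\cI_\ell \subset \T \otimes \Z_\ell$ is the Eisenstein ideal. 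It therefore suffices to prove, for every maximal ideal $\fm \supset \cI_\ell$ of residue characteristic $\ell$, that $J_0(N)(\Q)[\fm^\infty] \subseteq \cC(N)[\fm^\infty]$.

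The second step is to parametrize the relevant $\fm$ by pairs of Dirichlet characters $(\chi_1,\chi_2)$ of conductor dividing $N$ with $\chi_1\chi_2 = 1$, corresponding to the weight-$2$ Eisenstein series at level $N$. For each such $\fm$, Yoo's explicit description of the rational cuspidal divisor class group in \cite{YOO_rational_2023} produces elements of $\cC(N)[\fm]$ of the expected order, giving the lower bound on the cuspidal $\fm$-part. For the matching upper bound on $J_0(N)(\Q)[\fm]$, I would run a Ribet-style argument: analyze the component group $\Phi_q$ of the Néron model of $J_0(N)$ at each $q\mid N$, show that the $\fm$-part of $\Phi_q$ is exactly the ``toric-cuspidal'' part under the hypothesis $\ell^2 \nmid N$, and rule out any hypothetical non-cuspidal $\fm$-torsion via an inflation--restriction calculation on the residual representation attached to $\fm$. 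One then upgrades from $\fm$-torsion to $\fm^\infty$-torsion by a filtration argument on the Jordan--Hölder constituents of $J_0(N)(\Q)[\ell^\infty]$ as a $\T/\fm$-module.

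The main obstacle is the small-residue-characteristic case $\ell = 3$ with $3 \mid N$: the group $\F_3^\times$ is so small that extra Eisenstein characters become trivial modulo $3$, producing genuinely exceptional maximal ideals $\fm$ for which the tame-inertia step of the upper-bound argument collapses. The two hypotheses offered in the theorem --- either $3 \nmid N$, or some $q \mid N$ with $q \equiv -1 \pmod 3$ --- are precisely what is needed to exclude these exceptional $\fm$: in the latter case, such a $q$ forces $\chi(q) \neq 1$ for every would-be exceptional character $\chi$ modulo $3$, so the relevant Euler factor is a unit mod $3$ and the component-group analysis at $q$ goes through. Controlling these exceptional primes --- and, within them, separating unexpected $\Q$-rational torsion from genuinely cuspidal contributions when many primes dividing $N$ are congruent to $1$ modulo $\ell$ --- is the technically most delicate part of the argument and is where one ultimately invokes Yoo's refined bookkeeping of the rational cuspidal divisors.
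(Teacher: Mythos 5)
The paper does not prove this statement: Theorem \ref{thmYoo} is quoted as a result of Yoo \cite{YOO_torsion_2023}, with the proof only described as resting on Eisenstein ideal theory together with Ohta's refinements \cite{ohta_eisenstein_2014} (where the square-free case is done). So your overall plan --- reduce to Eisenstein maximal ideals and compare $J_0(N)(\Q)[\fm^\infty]$ with the cuspidal group computed in \cite{YOO_rational_2023} --- is pointed in the same direction as the actual proof, but as it stands it has genuine gaps rather than being a proof.

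Two concrete problems. First, in your opening step the Eichler--Shimura argument only yields $(T_p-(p+1))P=0$ for primes $p\nmid N\ell$ of good reduction; the assertion that $U_q$ and $w_q$ for $q\mid N$ "also act in an Eisenstein fashion" on $P$ is not a formal consequence of rationality, and pinning down the possible eigenvalues of $U_q$ modulo an Eisenstein maximal ideal (e.g.\ $U_q\equiv 1$ or $U_q\equiv q$) is itself a nontrivial ingredient of the Ribet--Ohta--Yoo analysis, not something you may assume. You also misapply the hypothesis $\ell^2\nmid N$: for $q\mid N$ with $q\neq\ell$, tameness of inertia on $\ell$-primary torsion is automatic, and the real role of $\ell^2\nmid N$ is to control the case where the residue characteristic $\ell$ itself divides the level (exactly once), which is where Ohta's study of the Eisenstein component at $\ell$ enters. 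Second, and more seriously, the upper bound $J_0(N)(\Q)[\fm^\infty]\subseteq\cC(N)[\fm^\infty]$ is the heart of the theorem, and your sketch only names tools ("inflation--restriction", "filtration on Jordan--H\"older constituents") without supplying the mechanism. In Mazur's prime-level prototype this step rests on multiplicity-one/Gorenstein properties of the completed Hecke algebra, which are known to fail for composite $N$ (cf.\ the paper's remark citing \cite{YooMZ}); Ohta and Yoo replace them by a delicate computation of the index and module structure of the Eisenstein component, matched against the explicit structure of $\cC(N)$ from \cite{YOO_rational_2023}. Without that input the argument does not close, and your explanation of the $\ell=3$ side conditions (the Euler-factor-is-a-unit heuristic at a prime $q\equiv -1\bmod 3$) is a plausible guess rather than a derivation of why precisely those hypotheses rescue the exceptional maximal ideals.
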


Yoo's proof is based on the theory of Eisenstein ideals, with important  refinements introduced by Ohta in \cite{ohta_eisenstein_2014}, where Theorem \ref{thmYoo} is proved for square-free $N$. 

\begin{rem}
\begin{enumerate}
    \item When $N = p^r$ is a prime power, \eqref{eqC=J} holds for all odd $\ell\neq p$, so the technical conditions on $\ell=3$ can be dropped; cf. \cite[(1.1)]{YOO_torsion_2023}. Moreover, 
    in this case there is a different geometric approach to this problem due to Lorenzini. 
Let $\cJ_0(N)$ denote the N\'eron model of $J_0(N)$ over $\Spec(\Z)$. Let $\cJ_0(N)_{\F_p}$ denote the fibre of $\cJ_0(N)$ at $p$, and let $\cJ_0(N)_{\F_p}^0$ be its connected component of the identity. Let $\Phi_{N,p}=\cJ_0(N)_{\F_p}/\cJ_0(N)_{\F_p}^0$ be the group of connected components of $\cJ_0(N)$ at $p$. There is a canonical reduction map $J_0(N)(\Q)_\tor\to \cJ_0(N)_{\F_p}$, which is \textit{injective}; cf. Appendix in \cite{Katz}. Composing this reduction map with $\cJ_0(N)_{\F_p} \to \Phi_{N,p}$, one obtains a canonical homomorphism $\pi_p\colon J_0(N)(\Q)_\tor\to \Phi_{N,p}$. The idea of Lorenzini's approach \cite{Lorenzini1995} is that, for certain $\ell$, the map $\pi_p\colon (\cC(N))_\ell\to (\Phi_{N,p})_\ell$ is surjective, whereas $\pi_p\colon (J_0(N)(\Q)_\tor)_\ell\to (\Phi_{N,p})_\ell$ is injective. This is based on complicated computations with $(\Phi_{N,p})_\ell$ and some inductive arguments with $r$. We note that for prime $N=p$, Mazur already proved in \cite{Mazur1977} that $\pi_p$ is an isomorphism. 
    \item The proofs of special cases of Conjecture \ref{conjCJ-N} by Lorenzini, Mazur, Ohta, and Yoo implicitly rely on the knowledge of the order of $\cC(N)$. In \cite{RW}, Ribet and Wake prove the equality \eqref{eqC=J} for square-free $N$ and $\ell\nmid 6N$ 
    by showing that both sides are isomorphic modules for the Hecke algebra acting on $J_0(N)$, without actually computing either side explicitly. 
    \item The methods that have been used to prove \eqref{eqC=J} in certain cases so far fall short of proving this equality for $\ell=2$. Thus, \eqref{eqC=J} remains largely open for the $2$-primary part, except when $N=p$ is prime. We note that the most technical arguments in Mazur's paper \cite{Mazur1977} proving Conjecture \ref{conjCJ-p} are actually concerned with the $2$-primary torsion.  
    \item In \cite{OhtaJ1}, Ohta proves that $(J_1(p)(\Q)_\tor)_\ell$ is generated by cuspidal divisors for any odd prime $\ell$, as was conjectured by Conrad, Edixhoven and Stein \cite{CES}. The proof again relies on the Eisenstein ideal machinery. 
\end{enumerate}
\end{rem}

Conjecture \ref{conjCJ-p} has a ``dual" version concerning the largest $\mu$-type subgroup $\cM(N)$ of $J_0(N)$: a commutative group scheme is \textit{$\mu$-type} if it is finite, flat, and its Cartier dual is a constant group scheme. The natural morphism $X_1(N)\to X_0(N)$ induces, by Picard functoriality, a morphism of their Jacobians $J_0(N)\to J_1(N)$. The kernel of this last morphism is called the \textit{Shimura subgroup} of $J_0(N)$, and will be denoted by $\cS(N)$. 
When $N=p$ is prime, $\cS(p)$ is cyclic of the same order as  $\cC(p)$. In \cite{Ogg1975}, Ogg conjectured:
\begin{conj*}{SJ-p}\label{conjSJ-p} For any prime $p$, we have $\cS(p) = \cM(p)$.
\end{conj*}

This conjecture was proved by Mazur in \cite{Mazur1977}. Despite the dual appearance of 
Conjectures \ref{conjCJ-p} and \ref{conjSJ-p}, the proof of Conjecture \ref{conjSJ-p} lies 
deeper than the proof of Conjecture \ref{conjCJ-p} since its proof relies on the fact that the completion of the Hecke algebra at any prime ideal in the support of Eisenstein ideal is Gorenstein. For general $N$, the order of $\cS(N)$, the action of the Hecke operators and the action of $\Gal(\overline{\Q}/\Q)$ on $\cS(N)$ are computed in \cite{LO}. In particular, $\cS(N)\subseteq \cM(N)$. The following generalization of Mazur's result was proved by Vatsal \cite{Vatsal}:  

\begin{thm}\label{Vatsal} Let $\ell$ be an odd prime whose square does not divide $N$.
Then $\cS(N)_\ell=\cM(N)_\ell$. 
\end{thm}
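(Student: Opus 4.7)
The plan is to establish the reverse inclusion $\cM(N)_\ell\subseteq \cS(N)_\ell$, since $\cS(N)\subseteq \cM(N)$ is already known from \cite{LO}. The strategy is Hecke-theoretic, in the spirit of Mazur's proof of Conjecture \ref{conjSJ-p} but adapted to composite $N$, working with a $\mu$-type subgroup via Cartier duality and showing that the associated character of the Hecke algebra $\T$ on $J_0(N)$ is forced to be Eisenstein in a sufficiently strong sense to pin down its source.

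First, I would reduce to the case of a minimal $\mu$-type subgroup $C\subseteq \cM(N)_\ell$, say $C\cong \mu_\ell^{\oplus r}$, and study its Cartier dual $C^\vee$, which is a constant $\F_\ell$-vector space scheme with trivial Galois action. The Weil pairing on $J_0(N)[\ell]$ pairs $C$ with a quotient $J_0(N)[\ell]\twoheadrightarrow C^\vee$; since Hecke operators are self-adjoint up to Atkin--Lehner twists, the action of $\T$ on $C$ is determined by its action on the corresponding étale piece. The key input is that every prime $p \nmid N\ell$ satisfies the congruence $T_p \equiv 1+p \pmod{\fm}$, where $\fm$ is the maximal ideal of $\T$ in the support of $C$; this is an Eichler--Shimura style computation applied to the fact that Frobenius acts as $1$ on $C^\vee$ and as the cyclotomic character on $C$ (up to twisting by characters of conductor dividing $N$ coming from the possible non-rationality of cusps).

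Next, I would invoke the structure theory of the Eisenstein completion $\T_\fm$. The hypothesis that $\ell$ is odd and $\ell^2 \nmid N$ is precisely what one needs to control Eisenstein primes: a refinement of the Mazur--Wiles / Ohta analysis (as used in \cite{ohta_eisenstein_2014} for the square-free case) gives that the $\fm$-adic completion of the relevant quotient of $\T$ is a Gorenstein local ring, so that the module of $\mu$-type elements has the same $\T_\fm$-length as the module of étale constants arising from the cusps. Combining this with the explicit description of $\cS(N)$ via the kernel $J_0(N)\to J_1(N)$, which identifies $\cS(N)_\ell$ with the subgroup of $\mu$-type origin coming from cuspidal data at level $\Gamma_1(N)/\Gamma_0(N)$, one concludes $C \subseteq \cS(N)_\ell$, giving the desired equality.

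The main obstacle is precisely the step where one must transfer the square-free hypothesis used in \cite{ohta_eisenstein_2014} to the weaker assumption $\ell^2 \nmid N$. At primes $p$ with $p^2 \mid N$ but $p \ne \ell$, new $\mu$-type subgroups can appear from the old subvariety of $J_0(N)$ via degeneracy maps $J_0(N/p)^2 \to J_0(N)$, and one must verify that these contributions are already accounted for inside $\cS(N)_\ell$. I would handle this by induction on the number of such ramified primes, using that the degeneracy maps are equivariant for the Hecke action away from $p$ and that Gorenstein-ness of the Eisenstein completion at $\ell$ is preserved under the old/new decomposition when $\ell \ne p$. The case $\ell = p$ is excluded by hypothesis, which is what makes the argument go through; relaxing this would require handling the failure of the canonical reduction map at $\ell$ to be injective on $\mu$-type subgroups, which appears to be genuinely obstructed.
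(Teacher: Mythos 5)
This theorem is not proved in the paper; it is quoted from Vatsal \cite{Vatsal}, and the paper explicitly records that Vatsal's argument is \emph{very different} from Mazur's: it combines a theorem of Ihara on unramified coverings of modular curves over finite fields with deep results on supersingular reductions of CM points on $X_0(N)$ and on special values of $L$-functions. Your proposal instead tries to push Mazur's Eisenstein-ideal/Gorenstein strategy (with Ohta-style refinements) to composite level, so it is not a reconstruction of the known proof but an attempt at a genuinely different route — and unfortunately it founders at its central step.

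The gap is your invocation of Gorenstein-ness of the Eisenstein completion $\T_\fm$ at composite level. For prime level $N=p$ this is exactly the deep input in Mazur's proof of Conjecture \ref{conjSJ-p}, but for composite $N$ it is false in general: as the paper itself notes (citing \cite{YooMZ}), already for $N=p_1p_2$ a product of two distinct primes the Hecke algebra is generally \emph{not} locally Gorenstein at Eisenstein primes, and Ohta's square-free analysis \cite{ohta_eisenstein_2014} does not supply the Gorenstein property you need; it yields results on the cuspidal side under hypotheses that do not include this. Without Gorenstein-ness the length comparison ``$\mu$-type part $=$ constant/cuspidal part'' — which is the step that actually forces $C\subseteq\cS(N)_\ell$ — has no justification, and the multiplicity-one statements implicit in your duality argument are precisely what fail at composite level (where, moreover, there are several Eisenstein maximal ideals attached to the various Eisenstein series indexed by divisors of $N$, an issue your argument does not address). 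The first part of your argument, that the support of a $\mu$-type subgroup in $\T$ is Eisenstein (via Eichler--Shimura and the triviality of Frobenius on the Cartier dual), is fine and standard; but the passage from ``Eisenstein'' to ``contained in the Shimura subgroup'' is the whole difficulty, and it is exactly to circumvent the failure of the Gorenstein property that Vatsal replaced the Hecke-algebra argument by the Ihara/CM-points/$L$-values method. Your inductive treatment of primes $p$ with $p^2\mid N$ via degeneracy maps inherits the same problem, since it presupposes that the Gorenstein input is available for the smaller levels.
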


Vatsal's proof is very different from Mazur's. It combines a theorem of  
Ihara about unramified coverings of modular curves over finite fields with some deep results 
about the supersingular reductions of CM points on $X_0(N)$ and special values of $L$-functions. 

\begin{rem}
    By definition, $\cS(N)$ is the kernel of a natural homomorphism $J_0(N)\to J_1(N)$. One might wonder whether the quotients of $J_0(N)$ by subgroups of $\cC(N)$ also give arithmetically interesting abelian varieties. Incidentally, there is another conjecture of Ogg in that direction. Assume $N$ is square-free and divisible by an even number of primes. Let $\cO_N$ be a maximal order in the indefinite quaternion algebra over $\Q$ with discriminant $N$, and let $\cO_N^1$ be the subgroup of $\cO_N$ consisting of elements of reduced norm $1$. Then $\cO_N^1$ is isomorphic to a subgroup of $\SL_2(\R)$, so it acts on the upper half-plane $\cH$. The quotient $X^N\colonequals \cO_N^1\bs \cH$ is a Shimura curve and has a canonical model over $\Q$. In \cite{Ribet1980}, Ribet proved the existence of an isogeny defined over $\Q$ between the ``new" part $J_0(N)^\new$ of $J_0(N)$ and the Jacobian $J^N$ of $X^N$. Unfortunately, Ribet's proof provides no information about this isogeny beyond its existence. 
    In \cite{Ogg1985}, by studying the component groups of $J_0(N)$ and $J^N$, Ogg made an explicit conjecture about the kernel of Ribet’s isogeny when $N=pq$ is a product of two distinct primes and $p = 2,3,5,7,13$: the conjecture predicts that there is an isogeny $J_0(N)^\new\to J^N$ of minimal degree whose kernel is a specific subgroup of $\cC_N$. Although this conjecture is false in general \cite{KP2}, it seems to be correct when $J_0(N)^\new=J_0(N)$; cf. \cite{KP1}. 
\end{rem}


\section{Review of Drinfeld modules}\label{sDM}

Drinfeld modules can be defined for subrings of function fields 
of general curves over a finite field $\F_q$ with $q$ elements. 
On the other hand, the analogue of $\Q$ in this setting is the rational function field 
$F=\F_q(T)$, where $T$ is an indeterminate. Since Ogg's conjectures are explicit classifications 
of rational torsion points of elliptic curves over $\Q$ or Jacobians of modular curves, their 
analogues over function fields are also stated mostly over $F$. Thus, 
in this section, we discuss Drinfeld modules only for the polynomial ring  
$A=\F_q[T]$. This assumption also simplifies some of the non-essential technicalities 
of the theory. Most of the results in this section are due to Drinfeld \cite{Drinfeld}. 
Detailed proofs of these statements can be found in \cite{Goss} or \cite{PapikianGTM}. 


\subsection{Analytic theory}\label{ssDM} 
We start with the analytic theory of Drinfeld modules, which parallels closely the theory of roots of unity $e^{2\pi i/n}$ and also analytic uniformization of elliptic curves as quotients of $\C$ by lattices. 

First, we introduce the analogues 
of real and complex numbers in the function field setting. 
The degree function $\deg=\deg_T\colon A\to \Z_{\geq 0}\cup \{-\infty\}$, which assigns to $0\neq a\in A$ its degree 
as a polynomial in $T$ and $\deg_T(0)=-\infty$, extends to $F$ by $\deg(a/b)=\deg(a)-\deg(b)$. 
The map $-\deg$ is a valuation on $F$; the corresponding place of $F$ is usually denoted by $\infty$. 
Let $\abs{\cdot}$ denote the corresponding absolute value on $F$ normalized by $\abs{T}=q$. 
The completion $\Fi$ of $F$ with respect to this absolute value is isomorphic to the field $\ls{\F_q}{1/T}$ 
of Laurent series in $1/T$. Finally, let $\Ci$ be the completion of an algebraic closure of $\Fi$. The absolute value 
$\abs{\cdot}$ has a unique extension, also denoted by $\abs{\cdot}$, to $\Ci$. 

An \textit{$A$-lattice} $\La\subset \Ci$ of rank $r\geq 1$ is a discrete 
$A$-submodule of $\Ci$ of rank $r$, where ``discrete" means that for any $N>0$ the set $\{\la\in \La\ :\ \abs{\la}\leq N\}$ is finite. One shows that any $A$-lattice
is of the form $\La=A\omega_1+\cdots +A\omega_r$, where $\omega_1, \dots, \omega_r\in \Ci$ are linearly independent over $\Fi$. 
Since the degree of $\Ci$ over $\Fi$ is infinite, there are $A$-lattices of arbitrarily large ranks (unlike $\Z$-lattices in $\C$). 

The \textit{exponential function} of $\La$ is 
$$
e_\La(x)=x\prod_{0\neq \la\in \La} \left(1-\frac{x}{\la}\right). 
$$
Using the discreteness of $\La$, it is not hard to show that the function $e_\La(x)$ is entire, i.e., converges everywhere on $\Ci$. Because of the non-archimedean setting, this implies that $e_\La(x)\colon \Ci\to \Ci$ is surjective. 
The set of zeros of $e_\La$ is exactly $\La$. 
Finally, because $\La$ is an $\F_q$-vector space,  $e_\La(x)$ satisfies $e_\La(x+y)=e_\La(x)+e_\La(y)$ and $e_\La(\alpha x)=\alpha e_\La(x)$ for all 
$\alpha\in \F_q$. In other words, $e_\La(x)$ is an $\F_q$-linear function. Thus, the power series expansion of $e_\La(x)$  is of the form 
$$
e_\La(x)=\sum_{n\geq 0} e_n(\La)x^{q^n}. 
$$
There are recursive formulas for the coefficients of $e_\La$ in terms of Eisenstein series:
If we put 
\begin{equation}\label{eqEisSer}
    E_n(\La)=\sum_{0\neq \la\in \La} \frac{1}{\la^n},
\end{equation}
then 
\begin{equation}\label{eqCoefExp}
    e_n(\La) = E_{q^n-1}(\La)+\sum_{i=1}^{n-1}e_i(\La) E_{q^{n-i}-1}(\La)^{q^i}. 
\end{equation}
Let 
$$\atwist{\Ci}{x}=\{a_0x+a_1x^q+\cdots+a_n x^{q^n}\mid n\geq 0, a_0, \dots, a_n\in \Ci\}$$ be the non-commutative ring 
of $\F_q$-linear polynomials with usual addition of polynomials but where multiplication is defined via the composition of polynomials. For example, 
$$
(Tx+x^q)\circ (x+Tx^{q^2})=T(x+Tx^{q^2})+(x+Tx^{q^2})^q= Tx+x^q+T^2x^{q^2}+T^qx^{q^3}. 
$$
Given $f(x)=a_0x+a_1x^q+\cdots+a_n x^{q^n}$ in $\atwist{\Ci}{x}$, 
we denote $\partial f=\frac{\dd}{\dd x}f(x)=a_0$. 

An important property of $e_\La(x)$ is the functional equation 
$$
e_\La(T x) = \phi_T^\La(e_\La(x)),
$$
where $\phi_T^\La(x)=Tx+g_1(\La)x^q+\cdots+ g_r(\La)x^{q^r}\in \atwist{\Ci}{x}$, $g_r(\La)\neq 0$. Because $e_\La(x)$ is $\F_q$-linear, this 
functional equation extends to all $a\in A$: for each 
$a\in A$, there is $\phi_a^\La(x)\in \atwist{\Ci}{x}$ such that $\deg_x \phi_a^\La(x)=\abs{a}^r$, $\partial \phi_a^\La=a$, and 
$
e_\La(a x)= \phi_a^\La(e_\La(x))$. 
Moreover, the map 
\begin{align*} 
	\phi^\La\colon A &\To \atwist{\Ci}{x}, \\ a &\longmapsto \phi_a^\La(x) 
	\end{align*} 
	is an $\F_q$-algebra homomorphism, called the \textit{Drinfeld module 
of rank $r$ associated to $\La$}. 

Conversely, a \textit{Drinfeld $A$-module of rank $r$ over $\Ci$} is an $\F_q$-algebra homomorphism $\phi\colon A\to \atwist{\Ci}{x}$, $a\mapsto \phi_a(x)$,  defined by 
$\phi_T(x)=Tx+g_1x^q+\cdots +g_r x^{q^r}$ with $g_r\neq 0$. One constructs an entire $\F_q$-linear function $e_\phi(x)$ 
satisfying $e_\phi(Tx)=\phi_T(e_\phi(x))$ as follows. Put 
$$
e_\phi(x) = e_0x+e_1x^q+e_2x^{q^2}+\cdots,
$$ 
where $e_0, e_1, \dots$ are to be determined. The functional equation $e_\phi(Tx)=\phi_T(e_\phi(x))$ leads to a system 
of equations 
$$
(T^{q^n}-T)e_n=e_{n-1}^qg_1+e_{n-2}^{q^2}g_2+\cdots+ e_{n-r}^{q^r}g_r, \quad n\geq 0,
$$
where $e_i=0$ for $i<0$. If we put $e_0=1$, then every other $e_n$ is uniquely determined from the above recursive formulas. 
It is not hard to show that the resulting function $e_\phi(x)$ is entire and the set of zeros $\La_\phi$ of $e_\phi$ is an 
$A$-lattice of rank $r$ such that $\phi=\phi^{\La_\phi}$.  

A \textit{morphism} $u\colon \phi\to \psi$ of Drinfeld modules is a polynomial $u(x)\in \atwist{\Ci}{x}$ such that $u(\phi_a(x))=\psi_a(u(x))$ 
for all $a\in A$. A morphism $u\colon \phi\to \psi$ is an \textit{isomorphism} 
if $u$ is invertible, i.e., $u=c\in \Ci^\times$. 
Note that since $T$ generates the $\F_q$-algebra $A$, the commutation 
\begin{equation}\label{eq-defmor}
    u(\phi_T(x))=\psi_T(u(x))
\end{equation}
is sufficient to ensure $u(\phi_a(x))=\psi_a(u(x))$  for all $a\in A$. 
Comparing the degrees of both sides of \eqref{eq-defmor}, we see that nonzero morphisms can exist only between Drinfeld modules of the same rank. 
We denote the set of all morphisms $\phi\to \psi$ by $\Hom(\phi, \psi)$. 
A \textit{morphism} of lattices $\La\to \La'$ is an element $c\in \Ci$ 
such that $c\La\subseteq \La'$. The set 
of all morphisms $\La\to \La'$ is denoted $\Hom(\La, \La')$. Both $\Hom(\phi, \psi)$ and $\Hom(\La, \La')$ are naturally $A$-modules. One shows that there is an isomorphism of $A$-modules 
\begin{align*}
\Hom(\phi, \psi) &\overset{\sim}{\To} \Hom(\La_\phi, \La_\psi)\\ 
u &\longmapsto \partial u. 
\end{align*}
From these constructions we get:  
\begin{thm} 
The category of Drinfeld modules of rank $r$ over $\Ci$ and 
the category of $A$-lattices of rank $r$ in $\Ci$ are equivalent. 
\end{thm}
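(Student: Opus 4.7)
The plan is to package the constructions already outlined in this section into a pair of mutually quasi-inverse functors. I would first promote $\La \mapsto \phi^\La$ to a functor by sending $c \in \Hom(\La,\La')$ (so $c\La \subseteq \La'$) to the unique $u_c(x) \in \atwist{\Ci}{x}$ with $u_c(e_\La(x)) = e_{\La'}(cx)$; such a polynomial exists because $e_{\La'}(cx)$ is entire $\F_q$-linear with zero set containing $\La$, so it factors through the surjection $e_\La$, and one checks $u_c \circ \phi_T^\La = \phi_T^{\La'} \circ u_c$ directly from $e_\La(Tx) = \phi_T^\La(e_\La(x))$ using the surjectivity of $e_\La$. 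Functoriality in the other direction is the assignment $u \mapsto \partial u$ already recorded in the excerpt.

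To see that the object-level assignments are mutually inverse, the identity $\phi = \phi^{\La_\phi}$ is already in the excerpt, and the opposite identity $\La = \La_{\phi^\La}$ reduces to the following uniqueness principle: any entire $\F_q$-linear function $f(x) = x + \cdots$ satisfying $f(Tx) = \phi_T^\La(f(x))$ is determined by the recursion for its coefficients, so $e_\La$ and the exponential $e_{\phi^\La}$ of $\phi^\La$ coincide, and hence so do their zero sets. The same uniqueness principle drives the morphism compatibility: given $u \colon \phi \to \psi$, both entire $\F_q$-linear functions $x \mapsto u(e_\phi(x))$ and $x \mapsto e_\psi((\partial u)\,x)$ have derivative $\partial u$ and satisfy $f(Tx) = \psi_T(f(x))$ (the first via $u\phi_T = \psi_T u$), so they are equal; evaluating at $\la \in \La_\phi$ yields $(\partial u)\la \in \La_\psi$. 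Conversely, $\partial u_c = c$ by differentiating $u_c(e_\La(x)) = e_{\La'}(cx)$ at $0$, so $u \mapsto \partial u$ and $c \mapsto u_c$ are mutually inverse, recovering the $A$-module isomorphism of Hom-sets already stated.

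The step I expect to carry the most technical weight is the uniqueness of such entire $\F_q$-linear solutions and, behind it, the convergence and rank-$r$ discreteness of the zero set $\La_\phi$ of $e_\phi$. The formal recursion for the coefficients has a unique solution, but one must know that the resulting series defines an entire function whose zeros form an $A$-lattice of rank exactly $r$; this is the non-archimedean analytic heart of the theorem, requiring a Newton polygon analysis and a zero-counting argument in balls $\{|x| \leq N\}$, and it is already absorbed into the assertion from the excerpt that $\La_\phi$ is an $A$-lattice of rank $r$. Once that analytic input is granted, the remainder is a formal categorical assembly: verifying that $F \colon \La \mapsto \phi^\La$ and $G \colon \phi \mapsto \La_\phi$ respect composition and identities, which is immediate from uniqueness of factorizations in $\atwist{\Ci}{x}$.
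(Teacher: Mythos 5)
Your proposal is correct and follows essentially the same route as the paper, which simply assembles the preceding constructions ($\La \mapsto \phi^\La$ via the exponential and its functional equation, $\phi \mapsto \La_\phi$ via the zeros of $e_\phi$, and the Hom-set isomorphism $u \mapsto \partial u$) and defers the analytic details to the standard references. Your extra verifications (existence of $u_c$, the uniqueness-of-solution principle for the functional equation, and $\partial u_c = c$) are exactly the routine checks the paper leaves implicit, with the genuine analytic content (entirety of $e_\phi$ and $\La_\phi$ being a rank-$r$ lattice) correctly identified as the input already stated in the text.
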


\begin{example} The \textit{Carlitz module} is the Drinfeld module defined by $\phi_T=Tx+x^q$. This is the simplest possible 
	Drinfeld module. We will distinguish the Carlitz module among all other Drinfeld modules by denoting it by $C$, i.e., $C_T=Tx+x^q$. 
	The rank of $C$ is $1$. It is easy to show that the coefficients of $e_C(x)=\sum_{n\geq 0} e_n x^{q^n}$ are given by the formula 
	$$
e_0=1 \quad \text{and}\quad  e_n=(T^{q^n}-T)(T^{q^{n}}-T^q)\cdots (T^{q^n}-T^{q^{n-1}})\quad \text{for}\quad n\geq 1. 
$$
The lattice $\La_C$ of $C$ has rank $1$, so $\La=\pi_C A$ for some $\pi_C\in \Ci^\times$. The generator $\pi_C$, which is well-defined 
up to an $\F_q^\times$-multiple, is called the \textit{Carlitz period}; it plays the role of $2\pi i\in \C$. 
There are various explicit formulas for the Carlitz period, one of which is the following:
\begin{equation}\label{eqCarlitzFormua}
	\sum_{\substack{a\in A \\ a\text{ monic}}} \frac{1}{a^{q-1}}=-\frac{\pi_C^{q-1}}{T^{q}-T}.
\end{equation}
This is the analogue of Euler's formula $\sum_{n\geq 1} 1/n^2 =\pi^2/6$. Wade \cite{Wade} proved that $\pi_C$ is transcendental over $F$, just like $\pi$ is transcendental over $\Q$. 
\end{example}

\begin{rem}
	The Carlitz module was originally introduced by Carlitz in \cite{Carlitz}, where he 
 defined $e_C(x)$ and proved \eqref{eqCarlitzFormua}. 
Carlitz also showed  in \cite{CarlitzTAMS} that $C$ gives rise to the correct analogue of cyclotomic polynomials over $F$. 
Unfortunately, \cite{Carlitz}  and \cite{CarlitzTAMS} did not receive the attention they deserved from the larger mathematical community and were mostly forgotten until the 1970s. 
\end{rem}

Given a Drinfeld module $\phi$, we equip $\Ci$ with a new $A$-module structure $a\circ z =\phi_a(z)$ denoted ${^\phi}\Ci$. 
This gives an exact sequence of $A$-modules 
\begin{equation}\label{eqDMUnif}
	0\To \La_\phi \To \Ci \overset{e_\phi}{\To} {^{\phi}}\Ci\To 0, 
\end{equation}
which can be interpreted as the analogue of analytic uniformization $\C/\La\overset{\sim}{\To}E(\C)$, 
$z\mapsto (\wp(z), \wp'(z))$ of an elliptic curve $E$ over $\C$ (here $\wp$ is the Weierstrass $\wp$-function associated to the lattice $\La$). 

For nonzero $a\in A$, the \textit{$a$-torsion points} of $\phi$ are the roots of $\phi_a(x)$. The set of these roots, denoted $\phi[a]$, is naturally an $A$-module 
via $b\circ \alpha = \phi_b(\alpha)$, where $b\in A$ and $\alpha\in \phi[a]$ (to see that $b\circ \alpha$ is in $\phi[a]$, compute 
$\phi_a(b\circ \alpha)=\phi_a(\phi_b(\alpha))=\phi_b(\phi_a(\alpha))=\phi_b(0)=0$). Applying the snake lemma to the exact sequence \eqref{eqDMUnif}, 
we get an isomorphism of $A$-modules: 
$$
\phi[a]\cong \La_\phi/a\La_\phi\cong (A/aA)^r. 
$$
A morphism $u\colon \phi\to \psi$ induces a homomorphism ${^\phi}\Ci\to {^\psi}\Ci$ of $A$-modules. 
In particular, $u$ induces a homomorphism $\phi[a]\to \psi[a]$, $\alpha\mapsto u(\alpha)$. 

It is easy to show that two Drinfeld modules $\phi$ and $\psi$ are isomorphic if and only if their corresponding lattices are homothetic: $\La_\phi= c\La_\psi$ for $c\in\Ci^\times$. Thus, to classify Drinfeld modules of rank~$r$ up to isomorphism, it is enough to classify $A$-lattices in $\Ci$ of rank $r$ up to homothety. 
This is trivial if $r=1$. For $r\geq 2$ the key object for our task 
is the \textit{Drinfeld symmetric space}
 $$\Omega^r= \p^{r-1}(\Ci)-\bigcup_{\Fi\text{-rational H}} H,
 $$ 
 where the union is over the $\Fi$-rational hyperplanes; equivalently, $\Omega^r$ is the set of all points $(z_1, \dots, z_r)$ of $\p^{r-1}(\Ci)$
 such that $z_1, \dots, z_r$ are linearly independent over $\Fi$. To the point $(z_1, \dots, z_r)$ 
 we associate the homothety class of the lattice $Az_1+\cdots+Az_r$. 
 The action of $\GL_r(\Fi)$ on $\p^{r-1}(\Ci)$ 
 preserves $\Omega^r$. 

 \begin{example}
 	Suppose $r=2$. In this case, $\p^{r-1}(\Ci)=\p^1(\Ci)$ consists of $[1, 0]$ and $[z, 1]$, $z\in \Ci$. It is easy to see that 
 	\begin{align*}\Omega^2 &=\{[z, 1]\mid z\not\in \Fi\}\\ &=\Ci-\Fi.
 	\end{align*}
 	After identifying $\Omega^2$ with $\Ci-\Fi$, $\GL_2(\Fi)$ acts on $\Omega^2$ by linear fractional transformations 
 	$$
 	\begin{pmatrix} a & b \\ c & d\end{pmatrix} z = \frac{az+b}{cz+d}. 
 	$$
 \end{example}
 
 From the bijection between lattices and Drinfeld modules, one deduces that the set of 
 isomorphism classes of Drinfeld modules of rank $r$ over $\Ci$ is in natural bijection with the set of orbits $$\GL_r(A)\bs\Omega^r.$$ 

 Each nonzero ideal $\fn\lhd A$ has a unique monic generator, which, by abuse of notation, we will also denote by $\fn$. Define the subgroups $\G(\fn)\subseteq \G_1(\fn)\subseteq \G_0(\fn)\subseteq \GL_r(A)$ as 
 $$
 \G(\fn)=\left\{\gamma\in \GL_r(A)\mid \gamma\equiv 1 \Mod{\fn}\right\},
 $$
 $$
 \G_1(\fn)= \left\{\begin{pmatrix} c_{ij}\end{pmatrix}\in \GL_r(A)\mid \begin{pmatrix} c_{11}\\ c_{21}\\ \vdots\\ c_{r1} \end{pmatrix}\equiv \begin{pmatrix} 1\\ 0\\ \vdots\\ 0 \end{pmatrix} \Mod{\fn}\right\}, 
 $$
 $$
 \G_0(\fn)= \left\{\begin{pmatrix} c_{ij}\end{pmatrix}\in \GL_r(A)\mid \begin{pmatrix} c_{11}\\ c_{21}\\ \vdots\\ c_{r1} \end{pmatrix}\equiv \begin{pmatrix} \ast\\ 0\\ \vdots\\ 0 \end{pmatrix} \Mod{\fn}\right\}. 
 $$
 Extending the above bijection, one shows that the orbits $Y^r_0(\fn)(\Ci)\colonequals \G_0(\fn)\bs \Omega^r$ are in bijection with the isomorphism classes 
 of pairs $(\phi, G_\fn)$, where $\phi$ is a Drinfeld module of rank~$r$ and $G_\fn\subset \phi[\fn]$ is an $A$-submodule isomorphic to $A/\fn$ 
 (two pairs $(\phi, G_\fn)$ and $(\phi', G_\fn')$ are isomorphic if there is an isomorphism $u\colon \phi\overset{\sim}{\to}\phi'$ such that $u(G_n)=G_n'$). 
 Similarly, $$Y^r_1(\fn)(\Ci):=\G_1(\fn)\bs \Omega^r \longleftrightarrow \{\text{isomorphism classes 
 of pairs } (\phi, P_\fn)\},$$ where $\phi$ is a Drinfeld module of rank $r$ and $P_\fn\in \phi[\fn]$ is a torsion point which generates an $A$-submodule isomorphic to $A/\fn$. Finally, 
 $Y^r(\fn)(\Ci):=\G(\fn)\bs \Omega^r$ classifies the isomorphism classes of pairs $(\phi, \iota)$, where 
 $\iota$ is an isomorphism $(A/\fn)^r\overset{\sim}{\To}\phi[\fn]$. 
 
 The quotients $Y=Y^r(\fn), Y^r_1(\fn), Y^r_0(\fn)$ are much more than just sets. In \cite{Drinfeld}, Drinfeld showed that 
 $\Omega^r$ has a natural structure of a smooth rigid-analytic manifold over $\Fi$, and that the group $\GL_r(A)$ acts discontinously on $\Omega^r$, so 
 $Y$ has a structure of an analytic manifold over $\Fi$. 
 Moreover, he proved that  $Y$ is algebraizable, in the sense 
 that it is the analytic space corresponding to an affine algebraic variety over $\Fi$: this point of view will be discussed at the end of Section~\ref{subs-DMalgtheory}. For more about analytic geometry over non-archimedean fields, the reader might consult \cite{FvdP}. 


\subsection{Algebraic theory}\label{subs-DMalgtheory} An \textit{$A$-field} is a field $K$ equipped with an $\F_q$-algebra homomorphism $\gamma\colon A\to K$. We will denote $t=\gamma(T)$. 
We call a nonzero prime 
ideal $\fp\lhd A$ a \textit{prime} of $A$; the primes of $A$ are in bijection with monic irreducible polynomials of $A$ of positive degree. 
Since $K$ is a field, there are two 
possibilities for the kernel of $\gamma$: either $\ker(\gamma)=0$ or $\ker(\gamma)=\fp$ is a prime of $A$. We call $\chr_A(K)\colonequals \ker(\gamma)$ 
the \textit{$A$-characteristic} of $K$. 

Let $\atwist{K}{x}$ be the non-commutative ring of $\F_q$-linear polynomials with coefficients in $K$, 
defined as earlier for $K=\Ci$. The ring $\atwist{K}{x}$ 
can be identified with the ring of $\F_q$-linear endomorphisms of the additive group scheme $\gm_{a, K}$  over $K$. 
A \textit{Drinfeld module} of rank $r\geq 1$ over $K$ is an $\F_q$-algebra homomorphism $\phi\colon A \to \atwist{K}{x}$, $a\mapsto \phi_a(x)$, such that 
$$
\phi_T(x)=tx+g_1x^q+\cdots+g_rx^{q^r}, \quad g_r\neq 0. 
$$
Note that the homomorphism $\phi$ is always injective, even if $\chr_A(K)\neq 0$, so 
$\phi$ gives an embedding of the commutative ring $A$ into the non-commutative ring $\atwist{K}{x}$. From the definition it follows 
that $\partial \phi_a=\gamma(a)$ for all $a\in A$. Hence, $\phi$ equips $\gm_{a, K}$ with a new action of $A$ such that on the tangent space of $\gm_{a, K}$ around the origin the induced action of $A$ is  
via the structure morphism $\gamma$.

Given a polynomial 
$f(x)=a_0x+a_1x^q +\cdots+ a_dx^{q^d}\in \atwist{K}{x}$, the smallest index $0\leq h\leq d$ 
such that $a_h\neq 0$ is the \textit{height} of $f$, denoted  $\Ht(f)$. 
If $\chr_A(K)=\fp\neq 0$, then $\gamma$ factors through the quotient $A\to A/\fp$ 
and $\Ht(\phi_\fp)>0$.  Using the commutation $\phi_a(\phi_\fp(x))=\phi_\fp(\phi_a(x))$, 
it is not hard to show that there is an integer $1\leq H(\phi)\leq r$, 
called the \textit{height of $\phi$}, such that for all nonzero $a\in A$ we have 
$$
\Ht(\phi_a)=H(\phi)\cdot \ord_\fp(a)\cdot \deg(\fp),
$$
where $\ord_\fp(a)$ is the power with which $\fp$ divides $a$. 

Given a Drinfeld module $\phi$ over $K$ of rank $r$ and $0\neq a\in A$, let $\phi[a]$ be the set of roots of $\phi_a(x)$ in an algebraic 
closure $\oK$ of $K$ without repetitions. As over $\Ci$, the set $\phi[a]$ is naturally an $A$-module. Decomposing $a=\fp_1^{s_1}\cdots\fp_m^{s_m}$ 
into distinct prime powers, we get an isomorphism of $A$-modules
$$
\phi[a]\cong \phi[\fp_1^{s_1}]\times \cdots \times  \phi[\fp_m^{s_m}].
$$
Moreover, for a prime $\fp$ we have 
$$
\phi[\fp^n]\cong
\begin{cases}
	(A/\fp^n)^r, & \text{if } \fp\neq \chr_A(K); \\ 
	(A/\fp^n)^{r-H(\phi)}, & \text{if } \fp = \chr_A(K). 
\end{cases}
$$

Assume that $\chr_A(K)\nmid a$.  In this case the polynomial $\phi_a(x)\in K[x]$ is separable, so 
the $A$-module $\phi[a]$ is naturally equipped with an action of the absolute Galois group $G_K\colonequals\Gal(K^\sep/K)$ of $K$. 
Since this action commutes with the action of $A$, we obtain a representation 
\begin{equation}\label{eqResRep}
    \rho_{\phi, a}: G_K\To \Aut_A(\phi[a])\cong \GL_r(A/aA). 
\end{equation}

\begin{example}
	Let $K=F$ and $\gamma\colon A\to F$ be the natural embedding of $A$ into its field of fractions. Consider the Carlitz module defined by 
	$C_T=Tx+x^q$ as a Drinfeld module of rank $1$ over $F$. The extensions $F(C[a])/F$ are the analogues of cyclotomic extensions of~$\Q$; cf. \cite{Hayes}.  
	For example, $F(C[a])/F$ is unramified at the primes of $A$ not dividing $a$ and $\Gal(F(C[a])/F) \cong (A/aA)^\times$, 
	where the isomorphism is given by mapping the Frobenius at $\fp$ to the residue of $\fp$ modulo $a$. 
\end{example}

Repeating an earlier definition, a \textit{morphism} 
$u\colon \phi\to \psi$ is an element $u\in \atwist{K}{x}$ such that $u\circ \phi_T=\psi_T\circ u$ (and thus, $u\circ \phi_a=\psi_a\circ u$ 
for all $a\in A$). The set $\Hom_K(\phi, \psi)$ of all morphisms $\phi\to \psi$ is naturally an $A$-module, with 
the action of $a\in A$ on $u\in \Hom_K(\phi, \psi)$ defined by $a * u=u\circ \phi_a=\psi_a\circ u$.  
A basic fact of the theory of Drinfeld modules is that 
if $\phi$ and $\psi$ have rank $r$, then $\Hom_K(\phi, \psi)$ is a free $A$-module of rank $\leq r^2$. 
Denote $\End_K(\phi):=\Hom_K(\phi, \phi)$; this 
is the centralizer of $\phi(A)$ in $\atwist{K}{x}$. 

\begin{example}
	Let $K= \F_{q^2}$ and $\gamma\colon A\to A/(T)\cong \F_q \hookrightarrow K$.  
	Let $\phi\colon A\to \twist{K}$ be the Drinfeld module of rank $2$ defined by $\phi_T=tx+x^{q^2}=x^{q^2}$. 
	In this case we have 
	$\End_K(\phi)=\atwist{K}{x}$, since $\phi_T$ is in the center of $\atwist{K}{x}$. Now it is not hard to see 
	that $\End_K(\phi)$ has rank~$4$ over $A$. But one can say more: $\End_K(\phi)$ is a maximal $A$-order 
	in the quaternion division algebra over $F$ ramified at $T$ and $\infty$. To see this, assume for simplicity that $q$ is odd. 
	Fix a non-square $\alpha$
	in $\F_q^\times$ and let $j\in \F_{q^2}$ be such that $j^2=\alpha$. The conjugate of $j$ over $\F_q$ is $-j=j^q$. 
	Thus, $\F_{q^2}\cong \F_q(j)$, and $x^q\circ  j=j^q\circ x^q=-j\circ x^q$. 
	If we denote $i=x^q$, then $i\circ i=\phi_T$, and we see that $\twist{K}\cong A[i, j]$, where $i^2=T$, $j^2=\alpha$, $ij=-ji$. 
\end{example}

\begin{example} If $K=\Ci$, then 
	$$
	\End_{\Ci}(\phi) \overset{\sim}{\To}\{c\in \Ci\mid c\La_\phi\subseteq \La_\phi\}. 
	$$
	This can be used to show that $\End_{\Ci}(\phi)$ is commutative and the rank of $\End_{\Ci}(\phi)$  
	as an $A$-module divides $r$. To construct a Drinfeld module with large endomorphism ring in this setting, 
	let $L/F$ be a field extension such that there is a unique place in $L$ over $\infty$. 
	Let $B$ be the integral closure of $A$ in $L$. Let $1=z_1, z_2, \dots, z_r\in B$ be a basis of $L$ as a vector space over $F$. 
	Put 
	$$
	\La=Az_1+Az_2+\cdots +Az_r. 
	$$
	Then $\La\subset \Ci$ is an $A$-lattice of rank $r$ and $\End_{\Ci}(\phi^\La)\cong B$. 
\end{example}

The action of $u\in \End_K(\phi)$ on $\phi[a]$ commutes with the action of $G_K$. If $\fp\neq \chr_A(\phi)$, then there is an injective homomorphism 
$$
\End_K(\phi)\otimes_A A/aA\To \End_{A[G_K]}(\phi[a]). 
$$
Thus, the image of $G_K$ in $\Aut(\phi[a])\cong \GL_r(A/aA)$ is proportionally smaller to the size of $\End_K(\phi)$ (larger the endomorphism ring, smaller the image of $G_K$ will be). 

\begin{defn}
A morphism $u\colon \phi\to \psi$ over $K$ is an \textit{isomorphism} if it has an inverse in $\atwist{K}{x}$. Hence, an isomorphism is given by a 
nonzero constant $c\in K$ such that $\phi_T(x)=c\psi_T(c^{-1}x)$. If $\phi_T=tx+g_1x^q+\cdots+g_rx^{q^r}$ and 
$\psi_T=tx+h_1x^q+\cdots+h_rx^{q^r}$, then this is equivalent to $g_i=h_i c^{q^i-1}$ for all $1\leq i\leq r$. 
\end{defn}

\begin{example}
		Suppose $\phi$ and $\psi$ have rank $1$. 
	Let $c$ be a root of $x^{q-1}=g_1/h_1$. Then $c\phi_Tc^{-1}=\psi_T$, so $\phi$ and $\psi$ are isomorphic 
	over $K(\sqrt[q-1]{g_1/h_1})$. This implies that, up to isomorphism, the Carlitz module $C_T=tx+x^q$ 
	is the only Drinfeld module of rank $1$ over $K^\sep$. 
	But note that $x^{q-1}=g_1/h_1$ might not have roots in $K$, so $\phi$ and $\psi$ might not be isomorphic over $K$.
	
	Now suppose $\phi$ and $\psi$ have rank $2$. The \textit{$j$-invariant} of $\phi$ is 
	$$
	j(\phi):=g_1^{q+1}/g_2. 
	$$
	It is not hard to check that $\phi$ and $\psi$ are isomorphic over $\oK$ if and only if $j(\phi)=j(\psi)$. 
	
	For a Drinfeld module $\phi$ of rank $\geq 3$ there is a finite collection of $j$-invariants, which are rational functions in the 
	coefficients of $\phi$ and which distinguish the isomorphism class of $\phi$ over $\oK$. These were constructed by Potemine \cite{Potemine}. 
	These $j$-invariants are not algebraically independent, which is reflected in the fact that the moduli space of Drinfeld modules of rank $r\geq 3$ 
	is not isomorphic to the affine space $\A^{r-1}$.  
\end{example}

A \textit{$\G(\fn)$-structure} on a Drinfeld module $\phi$ of rank $r$ over $K$, if $\chr_A(K)\nmid \fn$, is an isomorphism of $A$-modules 
$$
\iota\colon (A/\fn)^r \To \phi[\fn](K),
$$
where $ \phi[\fn](K)$ is the set of $K$-rational $\fn$-torsion points of $\phi$. A \textit{$\G_1(\fn)$-structure} is an injective homomorphism 
$A/\fn \to \phi[\fn](K)$, and a \textit{$\G_0(\fn)$-structure} is a morphism $\phi\to \psi$ over $K$ whose kernel is isomorphic to $A/\fn$. 
The notion of Drinfeld module and level structures can be extended 
to an arbitrary $A$-scheme $S$ (instead of $S=\Spec(K)$): a Drinfeld module over $S$ is a homomorphism 
$\phi\colon A\to \End(\cL)$ from $A$ to the ring of $S$-endomorphisms of the group scheme underlying a line bundle $\cL$ on $S$ 
satisfying certain conditions, and $\G(\fn)$-structure is a homomorphism of $A$-modules $\iota: (A/\fn)^r \To \phi[\fn](S)$, which 
induces an equality of Cartier divisors $\phi[\fn](S) = \sum_{\alpha\in (A/\fn)^r} \iota(\alpha)$. There results a moduli functor 
$\cY^r(\fn)$ from the category of $A$-schemes to the category of sets which to an $A$-scheme $S$ associates the 
set $\cY^r(\fn)(S)$ of isomorphism classes of Drinfeld modules over $S$ of rank $r$ equipped with a $\G(\fn)$-structure. 
Under the assumption that $\fn$ is divisible by at least two primes, Drinfeld proved in \cite{Drinfeld} that  $\cY^r(\fn)$ 
is representable by an affine flat $A$-scheme $Y^r(\fn)$ of dimension $r-1$ whose fibres over $\Spec(A)$ are smooth 
away from $\fn$. Taking the quotients of $Y^r(\fn)$ by finite groups one constructs (coarse) moduli schemes $Y^r_1(\fn)$ and $Y^r_0(\fn)$ 
classifying Drinfeld modules with level $\G_1(\fn)$ and $\G_0(\fn)$-structures. 

\smallskip
Assume $r=2$. Let $Y_0(\fn)\colonequals Y_0^2(\fn)$ and denote by $X_0(\fn)$ the unique projective smooth geometrically connected curve containing $Y_0(\fn)$ as an affine subvariety. The set of points $X_0(\fn)-Y_0(\fn)$ are the cusps of $X_0(\fn)$.

\begin{example}
Over $\Ci$ the $j$-invariant of Drinfeld modules gives an isomorphism between the projective line $\p^1_{\Ci}$ and $X_0(1)$.  
	Let 
	\begin{equation}\label{eqUDM}
		\phi_T(x)=Tx+x^q+j^{-1}x^{q^2},
	\end{equation}
	be the ``universal" Drinfeld module of rank $2$ with $j$-invariant $j$, where we consider $j$ as a variable.  
	A cyclic $T$-submodule $\cC$ of $\phi$ is the set of roots of an $\F_q$-linear polynomial of the form $f_\cC(x)=x+\alpha x^q$, where 
	$\alpha$ is another variable 
	(we may assume that the coefficient of $x$ in $f_\cC(x)$ is $1$ because the polynomial is separable and the set of zeros does not change 
	if we multiply $f_\cC$ by a nonzero constant). Since $\cC\subset \phi[T]$, we have 
	$$
	\phi_T(x)= (Tx+\tilde{\alpha}x^q)\circ (x+\alpha x^q). 
	$$
	Thus, $j^{-1}=\tilde{\alpha}\alpha^q$ and $\alpha T+\tilde{\alpha}=1$. This leads to $j^{-1}=(1-\alpha T)\alpha^q$. Substituting $\alpha\mapsto -1/\alpha$, we obtain 
	$$
	T+\alpha+j^{-1}\alpha^{q+1} = 0. 
	$$
	Note that $\cC$ is automatically a $\phi(A)$-module, i.e., $f_\cC\circ \phi_T=g\circ f_\cC$ for some $g\in \atwist{\Ci}{x}$, since any root of $f_\cC(x)$ 
	maps to $0$ under the action of $\phi_T$. Thus, $X_0(T)$ is defined by 
	$$
	j=-\frac{\alpha^{q+1}}{\alpha+T}. 
	$$ 
	Similarly, $X_0(T+1)$ is defined by 
	$j=-\beta^{q+1}/(\beta+(T+1))$ for another variable $\beta$. A cyclic $T(T+1)$-submodule of $\phi$ decomposes into a direct product of cyclic $T$ and $T+1$ submodules. Changing the notation for the  variables  $\alpha$ and $\beta$ to the 
	more conventional $x$ and $y$, we obtain the following as an equation of the curve $X_0(T(T+1))$ in the affine plane $\Spec(\Ci[x, y])$:
	$$
		\frac{x^{q+1}}{T+x} = 	\frac{y^{q+1}}{(T+1)+y}.  
	$$
\end{example}


\section{Modular forms over $\F_q(T)$}\label{sDMF}

In the context of Drinfeld modular varieties there are two different concepts that generalize 
classical modular forms. The first are Drinfeld modular forms, which are $\Ci$-valued holomorphic 
functions on $\Omega^r$. The second are Drinfeld automorphic forms, which are $\C$-valued functions 
on certain adele groups; these automorphic forms have a combinatorial interpretation as harmonic cochains on the Bruhat-Tits building $\cB^r$ of $\PGL_r(\Fi)$. Both of these concepts are important in the study of Drinfeld modular varieties. 


\subsection{Drinfeld modular forms}\label{subsection-DMF}

Our main references are \cite{Goss-ModForms}, \cite{Gekeler-LNM}, \cite{Gekeler-Coeffs}, \cite{GekelerJNTB}, \cite{BBP}. 
Let $r\geq 2$ and let $\G$ be a congruence subgroup of $\GL_r(A)$, i.e., $\G(\fn)\subseteq \G\subseteq \GL_r(A)$ for some $\fn\neq 0$. If we normalize the projective coordinates of $\boz=(z_1, \dots, z_r)\in \Omega^r$ by $z_r=1$, then 
$\gamma=(\gamma_{m,n})\in \G$ acts on $\Omega^r$ as 
$$
\gamma \boz = j(\gamma, \boz)^{-1} (\ldots, \sum_n \gamma_{m, n} z_n, \dots ),
$$
where $j(\gamma, \boz) = \sum_{n=1}^r \gamma_{r, n}z_n$. For example, if $r=2$ and we identify  $\Omega^2$ with $\Ci - \Fi$, then for $\gamma = \begin{pmatrix}a&b\\c&d\end{pmatrix}\in\G$ we have $j(\gamma, \boz)=j(\gamma,z) = cz+d$ and $\gamma \boz = \gamma z = \dfrac{az+b}{cz+d}$.

\begin{defn}
A \textit{Drinfeld modular form} for $\G$ of weight $k\in \Z_{\geq 0}$ and type $m\in \Z/(q-1)\Z$ is a holomorphic function 
$$
f\colon \Omega^r\To \Ci
$$
such that 
\begin{enumerate}
	\item[(i)] $f(\gamma \boz) =(\det \gamma)^{-m} j(\gamma, \boz)^k f(\boz)$ for all $\gamma\in \G$, and 
	\item[(ii)] $f(\boz)$ is holomorphic at the cusps of $\G$. 
\end{enumerate}
Denote the space of such functions by $M^r_{k, m}(\G)$. (It is shown in \cite{BBP} that $M^r_{k, m}(\G)$ is finite dimensional over $\Ci$.)
\end{defn}
Condition (ii) is technically complicated to explain when $r\geq 3$, so we will 
explain it only for $r=2$, and refer to \cite{BBP} for $r\geq 2$. Because $\G$ is a congruence group, it contains the subgroup $U_b=\left(\begin{smallmatrix} 1 & bA \\ 0 & 1\end{smallmatrix}\right)$ for 
some nonzero $b\in A$. Condition (i) implies that $f(z+b)=f(z)$, which itself implies that $f(z)$ can be expanded as 
$$
f(z)=\sum_{n\in \Z} a_n (1/e_{bA}(z))^n, \qquad a_n\in \Ci, 
$$
assuming $\Im(z)\colonequals \inf_{\alpha\in \Fi}\abs{z-\alpha}\gg 0$ (for simplicity, we will omit this condition in what follows). We say that $f(z)$ is \textit{holomorphic at the cusp $[\infty]$} if in the above expansion $a_n=0$ for all $n<0$ (this vanishing of coefficients with negative indices does not depend on the choice of~$b$). 
Next, for $g\in \GL_2(A)$, put $f|_g (z) = (\det g)^m j(g,z)^{-k}f(g z)$. This 
$f|_g$ satisfies (i) for any $\gamma \in g^{-1}\G g$, which is again a congruence group. Condition (ii) means that $f|_g$ is holomorphic at $[\infty]$ for all $g\in \GL_2(A)$. 
(Note that $f|_g=f$ for $g\in \G$, so for this last condition to hold it suffices that $f|_g$ is holomorphic at $[\infty]$ for left coset representatives of $\G$ in $\GL_2(A)$.) 

\begin{example}
Important examples of modular forms for $\GL_r(A)$ arise as ``coefficient forms". For 
$\boz=(z_1, \dots, z_r)\in \Omega^r$, we define the rank-$r$ lattice 
$
\La_{\boz}=Az_1+Az_2+\cdots+Az_r\subset \Ci$. 
Denote the Drinfeld module of rank $r$ associated to $\La_{\boz}$ by $\phi^{\boz}$. It is determined by 
\begin{equation}\label{eqCoeffForms}
	\phi^{\boz}_T=Tx+g_1(\boz)x^q+\cdots+g_r(\boz)x^{q^r}. 
\end{equation}
The functions $g_i(\boz)$, $1\leq i\leq r$, are Drinfeld modular forms for $\GL_r(A)$ of type $0$ and weights $q^i-1$. The function $\Delta_r(\boz)\colonequals g_r(\boz)$, called the \textit{Drinfeld discriminant function}, plays an especially important role in the study of the cuspidal divisor group in this context; it is the analogue of Ramanujan's $\Delta$ function. Note that $\Delta_r(\boz)$ 
is non-vanishing on $\Omega^r$ since $g_r(\La)\neq 0$ for any $A$-lattice $\La$ of rank $r$. 

The Eisenstein series $E_{q^n-1}(\boz)\colonequals E_{q^n-1}(\La_{\boz})$ defined in \eqref{eqEisSer} and the coefficients $e_{n}(\boz)\colonequals e_{n}(\La_{\boz})$ defined in \eqref{eqCoefExp} 
are Drinfeld modular forms of weight $q^n-1$.  
It can be shown that the $\Ci$-algebra of all Drinfeld modular forms of type $0$ is a polynomial ring:
$$
\bigoplus_{k\geq 1} M^r_{k, 0}(\GL_r(A)) = \Ci[g_1, \dots, g_r] = \Ci[e_1, \dots, e_r]= \Ci[E_{q-1}, E_{q^2-1}, \dots, E_{q^r-1}] .
$$
\end{example}

\begin{example}
    The \textit{$j$-function} on $\Omega^2$ is $j(z)\colonequals g_1(z)^{q+1}/g_2(z)$. This function is holomorphic on $\Omega$ but has a pole at the cusp $[\infty]$. 
Since $j(\gamma z)=j(z)$ for all $\gamma\in \GL_2(A)$, it defines a rational function on $X_0(1)$. 
In fact, $j(z)$ generates the field of rational functions on $X_0(1)\cong \p^1_{\Ci}$. 
\end{example}

Now we specialize to $r=2$ and $\G=\G_0(\fn)$. To simplify the notation, we will omit the superscript $r$, so for example $\Omega\colonequals\Omega^2$ and $M_{k,m}(\fn) \colonequals M^2_{k,m}(\G_0(\fn))$. Since $\G_0(\fn)$ contains the group of scalar matrices $\left(\begin{smallmatrix} \alpha & 0 \\ 0 & \alpha\end{smallmatrix}\right)$, $\alpha\in \F_q^\times$, applying 
condition (i) to such matrices one concludes that if $M_{k, m}(\fn)\neq 0$, then $k\equiv 2m\Mod{q-1}$. Hence, if $q$ is odd and $M_{k, m}(\fn)\neq 0$, then $k$ is necessarily even and $m=k/2$ or $m=k/2+(q-1)/2$ modulo $q-1$. Next, a simple calculation shows that the differential $\dd z$ on $\Omega$ satisfies 
$$
\dd(\gamma z) = \frac{\det(\gamma)}{(cz+d)^2}\dd z \quad \text{for all }\gamma\in \GL_2(\Fi). 
$$
Hence, if $f(z)\in M_{2k, k}(\fn)$, then $f(z)(\dd z)^k$ can be identified with a $k$-fold differential form on the Drinfeld modular curve $X_0(\fn)$. 

Since $\Gamma_0(\fn)$ contains the subgroup $U_1$, the expansion of a Drinfeld modular form $f$ at $[\infty]$ is $f(z)=\sum_{n\geq 0} a_n (1/e_A(z))^n$. Instead we will now use the parameter
$$t(z) = \frac{1}{\pi_C e_A(z)} = \frac{1}{e_C(\pi_C z)} = \frac{1}{\pi_C} \sum_{a\in A} \frac{1}{z+a},$$
which plays the role of the classical parameter $q = e^{2i\pi z}$ (this renormalization is chosen so that the expansions of the modular forms $\pi_C^{1-q} g_1$ and $\pi_{C}^{1-q^2} g_2$ at~$[\infty]$ have coefficients in~$A$). Then we have $f = \sum_{n\geq 0} a'_n t^n$ with $a'_n \in \Ci$. Since $t(\alpha z)=\alpha^{-1}t(z)$  for any $\alpha \in \F_q^\times$, the coefficients $a'_n$ are zero unless $n\equiv m \Mod{q-1}$ so the expansion of~$f$ is $$f = \sum_{i\geq 0} b_i t^{m+i(q-1)}.$$
A Drinfeld modular form is said to be \emph{cuspidal} (resp. \emph{doubly cuspidal}) \emph{at the cusp $[\infty]$} if $a'_0=0$ (resp. $a'_0 = 0$ and $a'_1=0$) (again these vanishing conditions do not depend on the choice of $b$).  If for all $g\in \GL_2(A)$, $f|_g$ is cuspidal (resp. doubly cuspidal) at $[\infty]$, we say that the Drinfeld modular form $f$ is \emph{cuspidal} (resp. \emph{doubly cuspidal}). Let $M_{k,m}^0(\fn)$ (resp. $M_{k,m}^{0,0}(\fn)$) denote the subspaces of such functions. Since $\dd e_A(z) / \dd z=1$, we have $\dd t = -\pi_C t^2 \dd z$ so doubly cuspidal Drinfeld modular forms play a role similar to classical cusp forms. Namely the map $f(z) \mapsto f(z) \dd z$ is an isomorphism between $M_{2,1}^{0,0}(\fn)$ and the space of holomorphic differential forms on the curve $X_0(\fn)$. In particular the dimension of $M_{2,1}^{0,0}(\fn)$ is equal to the genus of $X_0(\fn)$.


Hecke operators on Drinfeld modular forms can be defined as double coset operators for $\Gamma_0(\fn)$. Let $\fm \lhd A$ be a nonzero ideal of $A$. For the double coset $\Gamma_0(\fn) \bs (\Gamma_0(\fn) \left(\begin{smallmatrix}\fm&0\\0&1\end{smallmatrix}\right) \Gamma_0(\fn))$, a set of representatives is  \begin{equation}\label{eq-Sm}
S_{\fm}=\{ \left(\begin{smallmatrix} a & b \\ 0 & d \end{smallmatrix}\right) \in M_2(A) \, :\, a,d \text{ monic}, (ad) = \fm, (a)+\fn = A, \deg b < \deg d\}.
\end{equation} For $f \in M_{k,m}(\fn)$, we define $f|T_\fm = \sum_{g\in S_{\fm}} f|_g$. In more concrete terms
$$f|T_{\fm}\, (z) = \frac{1}{\fm^{k-m}} \sum_{\left(\begin{smallmatrix}a&b\\0&d\end{smallmatrix}\right) \in S_\fm} f\left( \frac{az+b}{d} \right).$$ 
The \emph{$\fm$-th Hecke operator $T_\fm$} is a $\Ci$-linear transformation of the space $M_{k,m}(\fn)$. It stabilizes the cuspidal and doubly-cuspidal subspaces. The Hecke operators $(T_\fm)_{\fm\lhd A}$ generate a commutative $\Ci$-subalgebra 
of $M_{k,m}(\fn)$, called the \emph{Hecke algebra} for $M_{k,m}(\fn)$. On Drinfeld modular forms, Hecke operators are completely multiplicative, i.e. for any $\fm,\fm' \lhd A$, we have $T_\fm T_{\fm'} = T_{\fm \fm'}=T_{\fm'} T_\fm$. This property distinguishes them from Hecke operators on classical modular forms, where they are only multiplicative.

Another important difference is that, the characteristic being positive, the space $M_{k,m}^{0,0}(\fn)$ has no inner product. Hence there is no guarantee that $T_\fm$ is diagonalizable. Goss was the first to point out this problem. Since then, examples of non-diagonalizable Hecke operators have been obtained in special cases (cf. \cite{Li-Meemark} for the subgroups $\Gamma_1(T)$ and $\Gamma(T)$) but in general, the question is still wide open. In particular we know no natural bases of Drinfeld modular forms which are simultaneous eigenforms for Hecke operators.


\subsection{Harmonic cochains}\label{subsection-HC}
Let $r\geq 2$ and let $V$ be an $r$-dimensional vector space over $\Fi$. 
Denote by $\cO_\infty$ the ring of integers of $\Fi$ and let $\pi_\infty$ be a uniformizer of $\cO_\infty$. 
An \textit{$\cO_\infty$-lattice in $V$} is a free $\cO_\infty$-module of rank $r$ which contains a basis of $V$. Two lattices $L_1$ and $L_2$ are \textit{homothetic} if there exists $\alpha\in \Fi^\times$ with $\alpha\cdot L_1=L_2$; this defines an equivalence relation on the set of lattices in $V$. We denote the equivalence class of $L$ by $[L]$. The \textit{Bruhat-Tits building} of $\PGL_r(\Fi)$ is the $(r-1)$-dimensional simplicial complex $\sB^r$ with set of vertices  
$$
\Ver(\sB^r)=\{[L]\mid L \text{ is a lattice in }V\},  
$$
in which the vertices $[L_0], \cdots, [L_n]$ form an $n$-simplex if and only if there is $L_i'\in [L_i]$, $1\leq i\leq n$, such that 
$$
L_0'\supsetneq L_1'\supsetneq \cdots \supsetneq L_n'\supsetneq \pi_\infty L_0'. 
$$
Since $\GL_r(\Fi)$ acts transitively on $\Ver(\sB^r)$ and the stabilizer of a vertex is conjugate to 
$\Fi^\times \GL_r(\cO_\infty)$, we have a bijection 
$$\Ver(\sB^r) \cong \GL_r(\Fi)/\Fi^\times \GL_r(\cO_\infty).$$ Similar bijections exist between the sets of higher dimensional 
simplices of $\sB^r$ of various types and left cosets of parahoric subgroups of $\GL_r(\cO_\infty)$. 
When $r=2$, $\sB^2$ is an infinite tree in which every vertex is adjacent to exactly $q+1$ other vertices. 

Harmonic cochains on simplicial complexes naturally arise in the study of combinatorial laplacians; cf. \cite{Garland}. A variant of harmonic $i$-cochains on $\sB^r$ was defined by de Shalit \cite{deShalit}: these are functions on pointed $i$-simplices of $\sB^r$, $1\leq i\leq r-1$, satisfying certain conditions (in \cite{deShalit} the building $\sB^r$ is defined over an arbitrary local field). The significance of the group $\Har^i(\sB^r, \Q_\ell)$ 
of $\Q_\ell$-valued harmonic $i$-cochains is that it is isomorphic to the $\ell$-adic cohomology group $H^i_{\mathrm{et}}(\Omega^r, \Q_\ell)$ of $\Omega^r$; cf.\cite{SS}, \cite{deShalit}. Also, $\Har^i(\sB^r, \Q_\ell)$ can be interpreted as a space of automorphic forms on the adele group $\GL_r(\A_F)$ with special restriction at $\infty$; see \cite{AitAmrane} (one transforms functions on $\GL_r(\A_F)$ to functions on $\GL_r(\Fi)$ using the strong approximation theorem and uses the bijections between the sets of simplices of $\sB^r$ and cosets in $\PGL_r(\Fi)$ mentioned earlier). 

The most relevant for our purposes are the harmonic $1$-cochains, which for $r=2$ were already defined by van der Put in \cite{vdPut}.  
\begin{defn} 
Let $\Ed(\sB^r)$ be the set of oriented $1$-simplices of $\sB^r$. Let $R$ be a commutative ring with unity. 
For $r=2$, an \emph{$R$-valued harmonic $1$-cochain} on $\sB^2$ is a 
function $f\colon \Ed(\sB^2)\rightarrow R$ that satisfies
\begin{enumerate}
    \item $$f(e)+f(\overline{e})=0\text{ for all }e\in \Ed(\sB^2).$$ 
    \item $$\sum_{\substack{e\in \Ed(\sB^2)\\o(e)=v}}f(e)=0\text{ for all }v\in \Ver(\sB^2).$$ 
\end{enumerate}
Here, $o(e)$ is the origin of $e$ and $\bar{e}$ is the edge $e$ with opposite orientation. 

When $r\geq 3$, two of the conditions defining harmonic $1$-cochains are similar to (1) and (2), with (2) refined by ``types" of edges, where $\mathrm{type}([L_0], [L_1])=\dim_{\F_q}L_0/L_1$.  
But there are two extra conditions. One of these conditions says that the sum of values of $f$ over the edges of a closed path in $\sB^r$ is $0$, and the other that the values of $f$ are uniquely determined by its values on edges of type $1$. We denote the space of $R$-valued harmonic $1$-cochains by $\Har^1(\sB^r, R)$. 
\end{defn}

From another perspective, $\sB^r$ is a combinatorial ``skeleton'' of $\Omega^r$. In fact, there is a $\GL_r(\Fi)$-equivariant map $\Omega^r\to \sB^r$ which sends affinoids to simplices; see \cite{Drinfeld}. The following important result relates the group of holomorphic invertible functions $\cO(\Omega^r)^\times$ on $\Omega^r$ to $\Z$-valued harmonic $1$-cochains on $\sB^r$: 
\begin{equation}\label{eqGvdP}
    0 \To \Ci^\times \To \cO(\Omega^r)^\times \xrightarrow{\mathrm{dlog}} \mathrm{Har}^1(\sB^r, \Z)\To 0, 
\end{equation}
where $\mathrm{dlog}$ is some sort of a logarithmic derivative.
The existence of \eqref{eqGvdP} was proved for $r=2$ by van der Put and extended to arbitrary $r\geq 2$ by Gekeler \cite{GekelerANT}.

From now on, the discussion will mostly concentrate on harmonic $1$-cochains on the Bruhat-Tits tree $\sB^2$. To simplify the notation, we denote $\sT=\sB^2$. The group $\GL_2(\Fi)$ acts on $\sT$ from the left.  Let 
$$
\cH(\fn, R)\colonequals \Har^1(\sT, R)^{\G_0(\fn)}
$$
be the submodule of $R$-valued harmonic $1$-cochains such that $f(\gamma e)=f(e)$ for all $\gamma\in \G_0(\fn)$ and $e\in \Ed(\sT)$. 
   The module of $R$-valued $\G_0(\fn)$-invariant \textit{cuspidal harmonic cochains}, denoted  $\cH_0(\fn, R)$, is the submodule of $\cH(\fn, R)$ consisting of functions which have compact 
   support as functions on $\G_0(\fn)\bs \sT$, i.e., functions which have value $0$ on all but finitely many edges of $\G_0(\fn)\bs \sT$. We denote by $\cH_{00}(\fn, R)$ the image 
   of $\cH_0(\fn)\otimes R$ in $\cH_0(\fn, R)$. (It is easy to construct examples where the inclusion $\cH_{00}(\fn, R)\subseteq \cH_0(\fn, R)$ is strict; cf. \cite[$\S$1.1]{papikian_eisenstein_2015}.)
It is known that the quotient 
graph $\G_0(\fn)\bs \sT$ is the edge disjoint union 
$$
\G_0(\fn)\bs \sT = (\G_0(\fn)\bs \sT)^0\cup \bigcup_{s\in \G_0(\fn)\bs \p^1(F)} h_s
$$
of a finite graph $(\G_0(\fn)\bs \sT)^0$ with a finite number of half-lines $h_s$, called \textit{cusps}. The cusps are in bijection with the orbits of the natural (left) action of $\G_0(\fn)$ on $\p^1(F)$.  
It is clear that $f\in \cH(\fn, R)$ is cuspidal if and only if it eventually vanishes on each $h_s$. 
One can show that $\cH_0(\fn, \Z)$ and $\cH(\fn, \Z)$ are finitely generated free 
$\Z$-modules of rank $g(\fn)$ and  $g(\fn)+c(\fn)-1$, respectively, where $g(\fn)$ 
is the genus of the curve $X_0(\fn)$ and $c(\fn)$ is the number of cusps. 

\begin{rem}
Drinfeld modular forms are related to $\Ci$-valued harmonic cochains. More precisely, 
using residues of differential forms on $\Omega$, Teitelbaum \cite{Teitelbaum} established an isomorphism between $M_{2,1}^0(\fn)$ and $\cH_0(\fn, \Ci)$. This isomorphism restricts to an 
isomorphism $M_{2,1}^{00}(\fn) \overset{\sim}{\To} \cH_{00}(\fn, \Ci)$; see \cite[(6.5)]{GekelerREVERSAT}. 
\end{rem}

Harmonic $1$-cochains invariant under congruence groups have Fourier expansions. This theory for $r=2$ was first developed by Weil in adelic language, and recast into more explicit formulas by Gekeler \cite{GekelerJNTB2} and P\'al \cite{PalDoc}. The theory was extended to arbitrary $r\geq 2$ in \cite{PW-MA}. We briefly discuss the $r=2$ case. 

The edges of $\sT$ are in bijection with $\GL_2(\Fi)/\Fi^\times \cI_\infty$, where $\cI_\infty$ is the Iwahori group:
$$
\cI_\infty:=\left\{\begin{pmatrix} a & b\\ c & d\end{pmatrix}\in \GL_2(\cO_\infty)\ \bigg|\ c\in \pi_\infty\cO_\infty\right\}. 
$$
Let 
$$\Ed(\sT)^+:=\left\{\begin{pmatrix} \pi_\infty^k & u \\ 0 & 1\end{pmatrix}\ \bigg|\
\begin{matrix} k\in \Z\\ u\in \Fi,\ u\mod{\pi_\infty^k\cO_\infty}\end{matrix}\right\}. $$
It is not hard to show that $\Ed(\sT)=\Ed(\sT)^+\bigsqcup \Ed(\sT)^+ \begin{pmatrix} 0 & 1\\ \pi_\infty & 0\end{pmatrix}$.

Assume that $R$ is a ring such that $p\in R^\times$ and $R$ contains the $p$-th roots of unity. A function $f\in \cH(\fn, R)$ has a \textit{Fourier expansion} 
$$
f \left(\begin{pmatrix} \pi_\infty^k & u \\ 0 &1\end{pmatrix}\right) = f^0(\pi_\infty^k)+
\sum_{0\leq j\leq k-2} q^{-k+2+j}\sum_{\substack{\fm\in A\text{, monic}\\ \deg(\fm)=j}}f^\ast(\fm)\, \nu(\fm u), 
$$
where $\nu(x):=-1$ if $x$ has a term of order $\pi_\infty$ in its $\pi_\infty$-expansion; $\nu(x):=q-1$ otherwise. Here the \textit{Fourier coefficients}  $f^0(\pi_\infty^k)$ and $f^\ast(\fm)$ are certain finite sums of values of $f$ twisted by a character $\chi\colon \Fi\to \C^\times$ taking values in the $p$-th roots of unity. Moreover, the \textit{constant Fourier coefficient} $f^0(\pi_\infty^k)$ 
is equal to $0$ for cuspidal harmonic cochains.

Following Weil, one can attach a Dirichlet $L$-series to a cuspidal harmonic cochain $f \in \cH_0(\fn,\C)$. Put
$$L (f,s) = \sum_{\fm} f^*(\fm) |\fm |^{-s-1}$$
where the sum is over all non-negative  divisors on $F$, including those with an $\infty$-component. This $L$-series is defined for $s\in\C$, is a polynomial in $q^{-s}$, and satisfies a functional equation when substituting $s \mapsto 2-s$.

Fix a non-zero ideal $\fn\lhd A$. Given a non-zero ideal $\fm\lhd A$, define 
an $R$-linear transformation of the space of $R$-valued functions on $\Ed(\sT)$ by 
\begin{equation}\label{eqDefTm}
f|T_\fm:=\sum_{g\in S_{\fm}} f|_g
\end{equation}
where the set $S_{\fm}$ has been defined in \eqref{eq-Sm}. 
This transformation is the \textit{Hecke operator} $T_\fm$. Following a common convention, 
for a prime divisor $\fp$ of $\fn$ one sometimes writes $U_\fp$ instead of $T_\fp$.

The group-theoretic proofs of properties of the Hecke operators 
acting on classical modular forms work also in this setting. In particular, 
the Hecke operators preserve $\cH_0(\fn, R)$, and satisfy the recursive formulas: 
\begin{align*}
T_{\fm\fm'}&= T_\fm T_{\fm'}\quad \text{if}\quad  \fm+\fm'=A,\\
T_{\fp^i} &= T_{\fp^{i-1}}T_\fp-|\fp|T_{\fp^{i-2}}\quad \text{if}\quad  \fp\nmid \fn, \\
T_{\fp^i} &= T_\fp^i\quad \text{if}\quad  \fp| \fn. 
\end{align*}
Let $\T(\fn)$ be the commutative $\Z$-subalgebra of $\End_\Z(\cH_0(\fn, \Z))$ 
generated by all Hecke operators. 

One can give an explicit formula the action of $T_\fm$ on the Fourier expansion of $f\in \cH_0(\fn, R)$. This formula implies that 
\begin{equation}\label{eqfTm}
    (f|T_\fm)^\ast(1)=|\fm|f^\ast(\fm). 
\end{equation}
Thus, the pairing 
\begin{equation}\label{eqPairing}
\begin{array}{ccc}
 \T(\fn)\times \cH_0(\fn, \Z) & \To & \Z \\
 ( T, f) & \longmapsto & (f|T_\fm)^\ast(1) 
 \end{array}
\end{equation}
is non-degenerate and becomes perfect after tensoring with $\Z[p^{-1}]$.

\begin{example}\label{exampleEis} The Gekeler--van der Put map \eqref{eqGvdP}, when applied to modular units arising from the Drinfeld discriminant function, produces Eisenstein harmonic $1$-cochains. This is a reflection of the Kronecker limit formula in this context; cf. \cite{PalDoc}, \cite{WeiInventiones}. We give an example of a special case, where the Eisenstein harmonic cochain can be described quite explicitly.  

Let $\fp\lhd A$ be a prime of degree $3$. In this case, the quotient graph $\G_0(\fp)\bs \sT$ looks like the graph in Figure~\ref{Fig1}; cf. \cite[Section 4.1]{papikian_eisenstein_2016}. 

\begin{figure}[ht]
\begin{tikzpicture}[scale=2, ->, >=stealth, semithick, inner sep=.5mm, vertex/.style={circle, fill=black}]

\node[vertex] (00) at (0, 0) {};
\node[vertex] (01) at (0, 1) {};
\node[vertex] (11) at (1, 1) {};
\node[vertex] (10) at (1, 0) {};
\node[vertex] (c1) at (-1, 0) {};
\node[vertex] (c2) at (2, 0) {};
\node at (.5, 1.05) {$\vdots\ b_u$};

\path[]
(11) edge[bend right]  (01)
(11) edge[bend left]  (01)
(01) edge node[auto,swap] {$a_\infty$} (00)
(10) edge node[auto] {$d_\infty$} (00)
(11) edge node[auto] {$a_1$} (10)
(00) edge[dashed] node[auto] {$s_\infty$} (c1)
(c2) edge[dashed] node[auto] {$s_1$} (10);
\end{tikzpicture}
\caption{$\G_0(\fp)\bs \sT$, $\fp$ is a prime of degree $3$.}\label{Fig1}
\end{figure}
The matrix representatives of the edges are the following. The dashed edges
$$
s_\infty=\begin{pmatrix} \pi_\infty & 0 \\0&1\end{pmatrix}, \quad s_1=\begin{pmatrix} \pi_\infty^3 & 0 \\0&1\end{pmatrix}
$$
indicate that they are the first edges on a half-line corresponding to the cusps $[\infty]$ and $[0]$, respectively; 
$$
a_\infty=\begin{pmatrix} \pi_\infty^2 & \pi_\infty \\0&1\end{pmatrix}, \quad a_1=\begin{pmatrix} \pi_\infty^3 & \pi_\infty^2 \\0&1\end{pmatrix},
\quad
d_\infty=\begin{pmatrix} \pi_\infty^2 & 0 \\0&1\end{pmatrix}; 
$$
there are $q$ edges 
$$
b_u = \begin{pmatrix} \pi_\infty^3 & \pi_\infty + u \pi_\infty^2 \\0&1\end{pmatrix}, \quad u \in \F_q.
$$ 

Let $E\colonequals \mathrm{dlog}(\Delta/\Delta_{\fp})\in \cH(\fp, \Z)$, where $\Delta = \Delta_2$ and $\Delta_{\fp}(z) :=\Delta(\fp z)$. One can compute the values of $E$ on the edges of $\G_0(\fp)\bs \sT$ using \cite[Corollary 2.9]{gekeler_1997} and \cite[Lemma 2.4]{ho_rational_2024}:
\begin{enumerate}
    \item $E(s_\infty) = E(s_1) = (q^2+q+1)(q-1)^2$.
    \item $E(a_\infty) = E(\overline{a_1}) = q(q-1)^2$ and $E(d_\infty) = (2q+1)(q-1)^2$.
     \item $E(b_u) = (q-1)^2$ for $u\in \F_q$.
\end{enumerate}
By \cite[Corollary 2.11]{gekeler_1997} and \cite[Lemma 4.4]{ho_torsion_2024}, we have $$E|U_{\fp}=E, \quad \text{and}\quad E|T_\fq = (|\fq|+1)E\quad \text{for all prime $\fq\neq \fp$}.$$ By using this and applying \cite[(2.5) and Corollary 2.8]{gekeler_1997} and \cite[Lemma 3.6]{ho_torsion_2024}, one can compute the Fourier coefficients of $E$:
\begin{enumerate}
    \item $E^0(\pi_\infty^k) = q^{1-k} (q^2+q+1)(q-1)^2$.
    \item $E^\ast(1) = q^{-1}(q+1)(q-1)^2$.
    \item $E^\ast(\fm) = \frac{(q+1)(q-1)^2}{q\, |\fm|} \, \prod_{i=1}^s \frac{|\fq_i|^{k_i+1}-1}{|\fq_i|-1}$ for $\fm = \fp^k \, \prod_{i=1}^s {\fq_i}^{k_i}\lhd A$. 
\end{enumerate}
Note that, similar to the appearance of divisor function in the Fourier expansions of classical Eisenstein series \cite[p. 100]{Miyake}, the factor  $(\abs{\fq}^{k+1}-1)/(\abs{\fq}-1)=\abs{\fq}^k+\cdots+\abs{\fq}+1$ in the expression for $E^\ast(\fm)$ is a version of the divisor function for $\fq^{k+1}$. We also point out that the term $(q^2+q+1)$ in 
$E^0(\pi_\infty^k)$ is the order of the cuspidal divisor group of $X_0(\fp)$, and this type of relations are  important in the proofs in Section \ref{sDJT=C}.
\end{example}

The analogue of the Petersson inner product in this setting is the pairing on $\cH_0(\fn, \C)$ 
defined by 
$$
(f, g) =\sum_{e\in \Ed(\G_0(\fn)\bs \sT)} f(e)\overline{g(e)} \mu(e)^{-1},
$$
where $\mu(e)=\frac{q-1}{2}\# \Stab_{\G_0(\fn)}(\tilde{e})$ and $\tilde{e}$ 
is a preimage of $e$ in $\sT$. (A Haar measure on $\GL_2(\Fi)$ induces a push-forward measure 
on $\Ed(\G_0(\fn)\bs \sT)$, which, up to scalar multiple, is equal to $\mu(e)$.) The Hecke operators $T_\fm$, $(\fn, \fm)=1$, are self-adjoint with respect to this pairing. Hence, the usual conclusions about the Hecke operators being diagonalizable and their eigenvalues being real are also valid in this setting. 

The Hecke operators may also be defined using correspondences on $X_0(\fn)$. Recall the moduli interpretation of $Y_0(\fn)$: it is the generic fibre of the coarse moduli scheme for pairs $(\phi,G_\fn)$ consisting of a Drinfeld $A$-module $\phi$ of rank~$2$ over $F$ with an $F$-rational $A$-submodule $G_\fn$ of $\phi[\fn]$ isomorphic to $A/\fn$ ($F$-rational means that $\sigma(G_\fn)=G_\fn$ for all $\sigma\in \Gal(F^\alg/F)$). For $\fm \lhd A$, the Hecke operator $T_\fm$ is defined as the correspondence on $Y_0(\fn)$ given by
$$T_\fm : (\phi,G_\fn) \mapsto \sum_{G_\fm \cap G_\fn = \{ 0 \} } (\phi/G_\fm , (G_\fn + G_\fm)/G_\fm).$$
It uniquely extends to $X_0(\fn)$. The resulting correspondence induces an endomorphism
of the Jacobian variety $J_0(\fn)$ of $X_0(\fn)$, also denoted $T_\fm$. The $\Z$-subalgebra of $\End(J_0(\fn))$ generated by $(T_{\fm})_{\fm \lhd A}$ 
is canonically isomorphic to $\T(\fn)$; this is a 
consequence of Drinfeld's Reciprocity Law \cite[Theorem 2]{Drinfeld}. Having this fact, 
one can use the classical Shimura construction to 
associate to a $\T(\fn)$-eigenform $f\in \cH_0(\fn, \C)$ an abelian variety $B_f$ 
whose number of points over $\F_\fp$, $\fp\nmid \fn$, is computed using the eigenvalue of $T_\fp$ acting on $f$. In particular, if $f$ has rational eigenvalues, then $B_f$ 
is an elliptic curve over $F$. Combining this with some deep results of Grothendieck and Deligne, 
one can deduce the following analogue of the Modularity Theorem (cf. \cite[(8.3)]{GekelerREVERSAT}):

\begin{thm}
    Let $E$ be an elliptic curve over $F$ with split multiplicative reduction at $\infty$. Then there is a non-constant morphism $X_0(\fn)\to E$ defined over $F$, where $\fn$ is the finite part of the conductor of $E$. 
\end{thm}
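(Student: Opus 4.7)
The plan is threefold: (a) attach a cuspidal Hecke eigencochain $f_E \in \cH_0(\fn,\C)$ to $E$ whose $L$-series matches $L(E,s)$, (b) apply the Shimura construction recalled above to obtain an elliptic curve $B_{f_E}$ as a quotient of $J_0(\fn)$ over $F$, and (c) identify $E$ with $B_{f_E}$ up to $F$-isogeny. Composing the canonical morphism $X_0(\fn) \to J_0(\fn)$ (using the rational cusp $[\infty]$ as base point) with the projection $J_0(\fn) \twoheadrightarrow B_{f_E}$ and an $F$-isogeny $B_{f_E}\to E$ will then yield the desired non-constant morphism.

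For step~(a), I would invoke the results of Grothendieck and Deligne on the $\ell$-adic cohomology of the elliptic surface attached to $E$: the $L$-function $L(E,s)$ is a polynomial in $q^{-s}$ with an Euler product whose local factors are determined by the reduction type of $E$ at each place. The split multiplicative reduction at $\infty$ pins down the local factor at $\infty$ to be $(1-q^{-s})^{-1}$, which on the automorphic side is the signature of the unramified Steinberg representation and corresponds precisely to a harmonic cochain on $\sT$ rather than a more exotic object at $\infty$. Away from $\infty$ the Euler factors are ramified exactly on the prime divisors of $\fn$ and with the expected conductor exponents. Formally expanding the Euler product produces a sequence $a_\fm \in \Z$ indexed by nonzero ideals $\fm \lhd A$ that satisfies the recursions listed after \eqref{eqDefTm}. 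Realising $(a_\fm)$ as the Fourier coefficients $(f_E^\ast(\fm))$ of some $f_E \in \cH_0(\fn,\C)$ is then an instance of Drinfeld's reciprocity law \cite[Theorem~2]{Drinfeld} for $\GL_2$ over $F$, and in view of \eqref{eqfTm} the resulting $f_E$ is automatically a simultaneous Hecke eigenform with rational eigenvalues.

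For steps~(b) and~(c), the Shimura construction recalled in the excerpt produces an abelian variety quotient $B_{f_E}$ of $J_0(\fn)$ of dimension $[\Q(\{a_\fm\}):\Q]=1$, hence an elliptic curve over $F$, with $\#B_{f_E}(\F_\fp)=\abs{\fp}+1-a_\fp=\#E(\F_\fp)$ at every prime $\fp\nmid \fn$. The isogeny theorem for abelian varieties over function fields (Zarhin, after Tate) then implies that $E$ and $B_{f_E}$ have isomorphic $\ell$-adic Tate modules as Galois representations and are $F$-isogenous, which completes the argument. The main obstacle is step~(a): fixing the level of $f_E$ to be exactly $\fn$ and its local component at $\infty$ to be the unramified Steinberg, rather than a twist or a form of different level, requires a careful local analysis at the primes dividing the conductor and at $\infty$, and uses in an essential way that the reduction at $\infty$ is split multiplicative (not non-split or additive).
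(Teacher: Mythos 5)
Your overall architecture is exactly the one the paper has in mind (the paper itself only sketches the argument, citing \cite[(8.3)]{GekelerREVERSAT}): establish automorphy of $E$ using the Grothendieck--Deligne theory of $L$-functions, pass through the Shimura construction to an elliptic curve quotient $B_{f_E}$ of $J_0(\fn)$, invoke the Tate conjecture for abelian varieties over function fields (Zarhin) to get an $F$-isogeny $B_{f_E}\to E$, and compose with the Abel--Jacobi embedding based at the rational cusp $[\infty]$.

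There is, however, one concrete misstep at the crux of step (a). Producing a cochain $f_E\in\cH_0(\fn,\C)$ whose Fourier coefficients are the prescribed $a_\fm$ is \emph{not} an instance of Drinfeld's reciprocity law: that law identifies the Hecke and Galois module structure of (the Tate module of) $J_0(\fn)$ with the space of automorphic forms/harmonic cochains, and it is precisely what you need later, in steps (b)--(c), to know that the Shimura quotient $B_{f_E}$ has Frobenius traces equal to the Hecke eigenvalues of $f_E$. The existence of $f_E$ itself is the content of Weil's converse theorem for $\GL_2$ over $F$: a sequence $(a_\fm)$ satisfying the Hecke recursions is nowhere near enough (any Euler product produces such a sequence); what is required is that $L(E,s)$ \emph{and all its twists by finite-order characters} are polynomials in $q^{-s}$ satisfying the expected functional equations with the correct conductors and local constants. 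This is exactly what Grothendieck's cohomological interpretation of these $L$-functions and Deligne's theory of local $\varepsilon$-factors supply, and it is only after the converse theorem yields a cuspidal automorphic representation that the hypothesis of split multiplicative reduction at $\infty$ pins its component at $\infty$ to be the Steinberg representation, so that the new vector is a $\G_0(\fn)$-invariant harmonic cochain of level exactly $\fn$ (this is also where ``split'' versus ``non-split'' matters, as you correctly anticipate). With Drinfeld's reciprocity moved to its proper place and the converse theorem inserted where you currently cite reciprocity, your proof matches the intended one.
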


\begin{defn}\label{defEI} The \textit{Eisenstein ideal} $\fE(\fn)$ of $\T(\fn)$ is the 
ideal generated by the elements $T_\fp-(|\fp|+1)$ for all prime $\fp\nmid \fn$. 
\end{defn}

The quotient $\T(\fn)/\fE(\fn)$ is a finite ring. Indeed, otherwise the perfectness of the pairing \eqref{eqPairing} implies that there is nonzero $f\in \cH_{00}(\fn, \Q)$ annihilated by $\fE(\fn)$, 
which contradicts Weil's bounds using the Shimura construction mentioned earlier. 

\begin{rem}
\begin{enumerate}
    \item The Eisenstein ideal in the context of Drinfeld modular curves was first defined by Tamagawa \cite{Tamagawa95} as the kernel of $\T(\fp)\to \End_\Z(\cC(\fp))$, where $\fp$ is prime and $\cC(\fp)$ is the cuspidal divisor group of $J_0(\fp)$. It can be shown that this is equivalent to  Definition \ref{defEI} when $\fn=\fp$ is prime; cf. \cite{papikian_eisenstein_2016}. Moreover, $U_\fp-1\in \fE(\fp)$, so $\fE(\fp)$ 
    is the analogue of Mazur's definition of the Eisenstein ideal in \cite{Mazur1977}. 
    \item Depending on the problem where it is used, the definition of Eisenstein ideal 
    might be modified to include elements of the form $U_\fp+a$ or $W_\fp+a$, where $\fp\mid \fn$, $a\in \Z$,  
    and $W_\fp$ is an Atkin-Lehner involution; cf. \cite{papikian_eisenstein_2016}, \cite{papikian_rational_2017}, \cite{ho_rational_2024}. 
\end{enumerate}
\end{rem}

\section{Torsion of Drinfeld modules}\label{sDMT}

\subsection{Analogue of Ogg's conjecture}
Let $\phi$ be a Drinfeld $A$-module of rank $2$ over $F$. Its torsion $A$-module $(^\phi F)_\tor\colonequals \bigcup_{a\in A} \phi[a](F)$ is finite as can be proved, for instance, by an argument involving the reductions of $\phi$; cf. Remark \ref{rem5.1} (2). 
Moreover, $(^\phi F)_\tor$ can be generated by two elements $$(^\phi F)_\tor \cong A/\fm \oplus A/\fn,$$ where $\fm$ and $\fn$ are non-zero ideals of $A$ and $\fm$ divides $\fn$. For rank-$2$ Drinfeld modules we have a counterpart of Conjecture~\ref{conjOgg1} on the possible torsion subgroups for elliptic curves.

\begin{conj*}{TDM}[{\cite[Conjecture 1]{SchweizerMZ}}] \label{conjSchweizer1}
Let $\phi$ be a rank-$2$ Drinfeld $A$-module over $F$. If we write $$(^\phi F)_\tor\cong A/\fm\oplus A/\fn\text{~with~}\fm|\fn,$$ then $\deg \fm+\deg \fn\leq 2$. This is essentially equivalent to saying that if $\fp$ is a prime of $A$ of degree $\geq 3$, then the Drinfeld modular curve $Y_1(\fp)$ has no $F$-rational points.
\end{conj*}

\begin{rem}\label{rem5.1}
    \begin{enumerate}
        \item The finite $A$-modules listed in Conjecture \ref{conjSchweizer1} as possible $F$-rational torsion submodules of rank-$2$ Drinfeld modules correspond exactly to the levels of those Drinfeld modular curves $X_1(\fn)$ and $X(\fn)$ which are isomorphic to $\p^1_F$. Thus, these finite modules occur as the torsion submodules of infinitely many non-isomorphic Drinfeld modules.   
        \item As of 2024, Conjecture \ref{conjSchweizer1} remains largely open. On the other hand,  under an extra assumption, it is easy to prove. We say that the Drinfeld module $\phi$ defined over $F$ by $\phi_T(x)=Tx+g_1x^q+\cdots+g_r x^{q^r}$ has \textit{good reduction} at the prime $\fl\lhd A$  if 
        $\ord_{\fl}(g_i)\geq 0$ for $1\leq i\leq r$ and $\ord_{\fl}(g_r)=0$. If $\phi$ has good reduction at $\fl$, then reducing the coefficients of $\phi_T$ modulo $\fl$, one obtains a Drinfeld module $\bar{\phi}$ over $\F_\fl$ of the same rank. It is not hard to show that the reduction modulo $\fl$ induces an injection $({^\phi}F)_\tor\hookrightarrow ({^{\bar{\phi}}}\F_\fl)_\tor$; cf. \cite[Theorem 6.5.10]{PapikianGTM}. Thus, $\# ({^\phi}F)_\tor\leq \abs{\fl}$. In particular, if after replacing $\phi$ by an isomorphic Drinfeld module it acquires good reduction at a prime of degree $\leq 2$, then Conjecture \ref{conjSchweizer1} holds for $\phi$.  
        \item It seems like an interesting and important problem to give an explicit conjectural classification of possible $F$-rational torsion submodules of Drinfeld modules of rank $3$,  possibly using the geometry of Drinfeld modular surfaces as a guide.  
    \end{enumerate}
\end{rem}

Following Ogg's philosophy on the existence of rational points on modular curves, we may also formulate the following conjecture which is an analogue of \ref{conjOgg2}.
It is supported by the fact that the curve $X_0(\fp)$ has nonzero genus if and only if $\deg \fp \geq 3$ (and this genus is then at least $2$), and by the list of all ideals $\fn$ of $A$ with $\deg \fn\geq 3$ such that $Y_0(\fn)$ has a $F$-rational CM point; cf. \cite{SchweizerMZ}.

\begin{conj*}{TDM$^+$} \label{conjSchweizer2}
Let $C = 3$ if $q\neq 3$, and $C = 4$ if $q = 3$. If $\fp$ is a prime of $A$ of degree~$\geq C$, there exists no Drinfeld $A$-module $\phi$ of rank~$2$ over $F$ with an $F$-rational $A$-submodule of $\phi[\fp]$ isomorphic to $A/\fp$. 
Equivalently, if $\fp$ is a prime of $A$ of degree~$\geq C$, the Drinfeld modular curve $Y_0(\fp)$ has no $F$-rational points.
\end{conj*}

There is also an analogue of Conjecture~\ref{conjUBC} for Drinfeld modules of arbitrary rank $r$. If $r\geq 2$, it is reminiscent of the uniform boundedness conjecture for the torsion of abelian varieties over a number field.
\begin{conj*}{UBC-DM}[{\cite[Conjecture 2]{PoonenUBT}}] \label{conjPoonen}
Fix $q$ as well as $r\geq 1$ and $d\geq 1$. There is a uniform bound on $\#(^\phi L)_\text{tors}$ as $L$ ranges over extensions of $F$ of degree $\leq d$, and $\phi$ ranges over rank-$r$ Drinfeld $A$-modules over $L$. In the rank $r = 2$ case, this is equivalent to saying that there exists a constant $C > 0$ such that if $\fn$ is an ideal of $A$ with $\deg\fn \geq C$ and $L$ is an extension of $F$ of degree $\leq d$, then the Drinfeld modular curve $Y_1(\fn)$ has no $L$-rational points.
\end{conj*}

\begin{rem}
\begin{enumerate}
    \item The dependence of the universal bound on the rank $r$ in Conjecture \ref{conjPoonen} cannot be avoided. Indeed, let $V\subset F$ be an arbitrary $r$-dimensional $\F_q$-vector subspace of $F$, 
    and let $\phi_T(x)=Tx\prod_{0\neq v\in V}(1-x/v)$. Since $\phi_T(x)$ is an $\F_q$-linear polynomial of degree $q^r$, it defines a Drinfeld module over $F$ of rank $r$. On the other hand, by construction, $\phi[T]\cong (A/TA)^r$ is rational over $F$.  
 \item This conjecture in \cite{{PoonenUBT}} is stated for more general rings $A$, namely those of regular functions on the affine curve obtained by removing a closed point from
a nonsingular projective curve over $\F_q$. This more general conjecture reduces to \ref{conjPoonen} for $\F_q[T]$ since any Drinfeld $A$-module can be considered as a Drinfeld $\F_q[T]$-module of higher rank. 
An earlier formulation of the conjecture can be found in Probl\`{e}me~3 of \cite{Denis1995}. 
\item For rank-$1$ Drinfeld modules, Poonen \cite{PoonenUBT} gave several proofs of this conjecture, including an explicit bound on the size of the torsion. (For example, $\# ({^\phi}F)_\tor\leq q$ if $\phi$ has rank $1$ and $q\neq 2$.)  See also Rosen \cite{RosenFG} and Schweizer \cite{SchweizerMZ} for other proofs or improvements in this case. A fact essential for the proofs is that any rank-$1$ Drinfeld module defined over $L$ becomes isomorphic to the Carlitz module over an extension of $L$ of degree $\leq q-1$.  
\item 

It is possible to reformulate Conjectures \ref{conjSchweizer1} and \ref{conjPoonen} in an 
equivalent, elementary form, which makes no reference to Drinfeld modules; cf. \cite{PoonenClassifPreperiodic,Schweizer-Periodicpoints}. 
Using terminology from arithmetic dynamics, we say that $\alpha\in L$ is a \textit{preperiodic point} for the polynomial 
$f(x)\in L[x]$ if the set $\{\alpha, f(\alpha), f(f(\alpha)), f(f(f(\alpha))),\dots\}$ is finite. Note that for a Drinfeld module  $\phi$ over $L$, the set of preperiodic points of $\phi_T(x)$ is exactly $({^\phi}L)_\tor$. Thus, Conjecture \ref{conjSchweizer1} can be stated as saying that for any $f(x)=Tx+gx^q+\Delta x^{q^2}\in F[x]$, with $\Delta\neq 0$, the set 
of preperiodic points in $F$ has cardinality at most $q^2$. Also, Ingram \cite{Ingram} has proposed a perspective on Conjecture~\ref{conjPoonen} via the adelic filled Julia set attached the Drinfeld module when viewed as a dynamical system, with applications to certain families of rank-$r$ Drinfeld modules.
 \item A version of Conjecture \ref{conjUBC} for elliptic curves over function fields is 
        relatively easy to prove, and was already done by Levine \cite{Levin} in 1968. Similarly, let $L$ be a field of transcendence degree $1$ over $F^\alg$, 
        and let $\phi$ be a Drinfeld module of rank $2$ over $L$ such that $j(\phi)\not\in F^\alg$. Schweizer proved in \cite{Schweizer2004} that 
        $$
\# ({^\phi}L)_\tor\leq (\gamma^2(q^2+1)(q+1))^{q/(q-1)},
        $$
        where $\gamma$ is the gonality of $L$. 
\end{enumerate}
\end{rem}

In rank $2$, Conjecture~\ref{conjPoonen} is currently open in general but progress has been made. Recall that a first and important step for the proof of Conjecture~\ref{conjUBC} was Manin's result \cite{Manin}, namely a uniform bound on the $p$-primary torsion of elliptic curves over a number field $L$, depending on $p$ and $L$. Its analogue for Drinfeld $A$-modules of rank~$2$ has been established by Poonen \cite{PoonenUBT}. 
The following theorem of Schweizer improves this result by making the constant depend only on the prime of $A$ and the degree of $L$ (see also \cite{CKK2015} for a generalization to more general function fields than~$F$):
\begin{thm} [{\cite[Theorem 2.4]{SchweizerMZ}}]\label{thm-schweizeruniformpprimary}
Fix $q$, as well as $d\geq 1$ and a prime $\fp$ of $A$. As $L$ ranges over all extensions of $F$ with $[L:F] \leq  d$ and $\phi$ ranges over all rank-$2$ Drinfeld $A$-modules over $L$, there is a uniform bound on the size of the $\fp$-primary part of $(^\phi L)_\tor$.
\end{thm}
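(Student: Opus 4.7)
The plan is to adapt Manin's Galois-theoretic bound for the $p$-primary torsion of elliptic curves over number fields to the Drinfeld setting. Since $(^\phi L)_\tor \cap \phi[\fp^\infty]$ is an $A_\fp$-submodule of $\phi[\fp^n]\cong(A/\fp^n)^2$ for some $n$, it suffices to bound the exponent $n$ such that $\phi[\fp^n](L)$ contains a point of exact order $\fp^n$. Fix such a point $P\in\phi[\fp^n](L)$; the goal is to show that $n\leq n_0(\fp,d)$ for an explicit function $n_0$.

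Since $\langle P\rangle$ is an $L$-rational cyclic $A$-submodule of $\phi[\fp^n]$, the representation $\rho_{\phi,\fp^n}\colon G_L\to\Aut_A(\phi[\fp^n])\cong\GL_2(A/\fp^n)$ of \eqref{eqResRep} has image in a Borel subgroup of the form $\left(\begin{smallmatrix} 1 & * \\ 0 & \chi \end{smallmatrix}\right)$, so its determinant equals the lower-diagonal character $\chi$. The analogue of the Weil pairing for rank-$2$ Drinfeld modules identifies $\det\rho_{\phi,\fp^\infty}$, up to an unramified twist coming from a rank-$1$ Drinfeld module isogenous to the Carlitz module, with the restriction to $G_L$ of the Carlitz cyclotomic character $\chi_C\colon G_F\to A_\fp^\times$. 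By Drinfeld's theory of cyclotomic function fields, $\Gal(F(C[\fp^k])/F)\cong(A/\fp^k)^\times$ is totally ramified at $\fp$ and unramified away from $\fp$ and $\infty$. Thus for any place $v$ of $L$ above $\fp$ with ramification index $e=e(v|\fp)\leq[L:F]\leq d$, the image of inertia $I_v$ under $\chi_C|_{G_L}$ contains $(1+\fp^e A_\fp)/(1+\fp^n A_\fp)$. Combined with the Borel containment, this forces the order of $\chi$ on $I_v$ to grow linearly with $n$, and since the Borel form produces an explicit relation between $\chi|_{I_v}$ and the (local) structure of $\phi[\fp^n]$ near $v$, a direct ramification computation yields a bound on $n$ depending only on $\fp$ and $e$, hence only on $\fp$ and $d$.

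The main obstacle will be handling the various local behaviours of $\phi$ at places $v|\fp$ uniformly. If $\phi$ has good ordinary or bad reduction at $v$, then $\phi[\fp^\infty]$ admits a canonical connected-\'etale filtration at $v$, the Borel above matches this filtration, and the argument runs smoothly via the Lubin--Tate description of the formal factor. Supersingular reduction is more delicate: the entire $\fp$-adic Tate module is "formal" and the inertia image at $v$ is an open subgroup of a non-split torus, which is incompatible with the Borel containment unless $n$ is small. Extracting a single uniform bound in this case requires Drinfeld's theory of formal $A_\fp$-modules and the fact that, over a finite extension of $F_\fp$ of degree $\leq d$, the $\fp$-power torsion of a rank-$2$ formal $A_\fp$-module of height $2$ is uniformly bounded. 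A secondary concern is controlling the unramified twist in the Weil-pairing identification, but since every rank-$1$ Drinfeld module over $F$ becomes isomorphic to the Carlitz module over an extension of degree dividing $q-1$, this twist can be absorbed at the cost of replacing $L$ by an extension of uniformly bounded degree, which only inflates the final constant $n_0(\fp,d)$ by a controlled factor.
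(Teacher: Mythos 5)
There is a genuine gap, and it sits exactly at the step you dismiss as running ``smoothly'': the case of bad reduction. A purely local, Galois-theoretic argument of the kind you propose cannot prove this theorem, because the $\fp$-primary torsion of rank-$2$ Drinfeld modules over a \emph{fixed} local field is unbounded. If $\phi$ has stable reduction of rank $1$ at a place $v$ (which may well lie over $\fp$), Drinfeld's analogue of Tate uniformization expresses $\phi$ through a rank-$1$ Drinfeld module $\psi$ and a rank-$1$ lattice $\Gamma$, with exact sequences $0\to\psi[\fp^n]\to\phi[\fp^n]\to\fp^{-n}\Gamma/\Gamma\to 0$; choosing $\Gamma$ generated by $\psi_{\fp^n}(\beta)$ for a suitable $\beta$ in the local field produces an $L_v$-rational point of exact order $\fp^n$ for arbitrary $n$ (just as the Tate curve with $q=p^{p^n}$ has a $\Q_p$-rational point of order $p^n$). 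In that situation your Borel containment and the ramification of $\det\rho_{\phi,\fp^n}$ (the Carlitz character) coexist with no contradiction: the ramification is carried entirely by the ``connected'' rank-$1$ piece $\psi[\fp^n]$, while the rational point lies in the unramified quotient $\fp^{-n}\Gamma/\Gamma$, so no ``direct ramification computation'' bounds $n$. (Incidentally, your worry about supersingular reduction is misplaced: that is precisely the case where the Borel containment does force $n$ to be small; the genuinely problematic case is the Tate-uniformized one.)

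Because of this local unboundedness, the statement is genuinely global, and Schweizer's proof, as outlined in the paper, is of a completely different nature. Following Kamienny--Mazur and Poonen, an $L$-rational $\fp^e$-torsion point with $[L:F]\le d$ gives an $F$-rational point on the $d$-fold symmetric power of a Drinfeld modular curve of level $\fp^e$; the key ingredient is that for a curve $C$ over a global field the symmetric power $C^{(d)}$ has only finitely many rational points as soon as $C$ admits no rational covering of $\p^1$ of degree $\le 2d$, which over function fields Schweizer deduces from Hrushovski's proof of the Mordell--Lang conjecture; combined with lower bounds on the gonality of $X_0(\fp^e)$ growing with $e$, this yields the uniform bound. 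None of this global input (symmetric powers, gonality, Mordell--Lang) appears in your proposal, and the local analysis you substitute for it breaks down at places of bad reduction.
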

As a consequence, Conjecture~\ref{conjPoonen} in the rank-$2$ case is reduced to the problem of bounding uniformly the number of primes $\fp$ in the primary decomposition of the torsion, equivalently to proving that for any $d\geq 1$, there exists a constant $C > 0$ such that if $\fp$ is a prime of $A$ with $\deg\fp \geq C$ and $L$ is an extension of $F$ of degree $\leq d$, then $Y_1(\fp)$ has no $L$-rational points. 

Recently and for arbitrary rank,  Ishii proved a version of Manin's uniformity result on the $\fp$-primary torsion for one-dimensional families of Drinfeld modules over a finitely generated extension of $F$, which is the analogue of a result of Cadoret and Tamagawa \cite{CaTa} for $1$-dimensional families of abelian varieties. More precisely, we have:
\begin{thm}[{\cite[Theorem 1.1]{ishii_-primary_2024}}]
Let $\fp$ be a prime of $A$, $L$ a finitely generated extension of $F$, $S$ a one-dimensional scheme which is of finite type over $L$, and $\phi$ a Drinfeld $A$-module over $S$. Then there exists an integer $N := N(\phi, S, L, \fp)\geq 0$ such that $\phi_s[\fp^\infty](L)\subset \phi_s[\fp^N](L)$ for every $s\in S(L)$.
\end{thm}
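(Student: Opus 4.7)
The plan is to transfer the method of Cadoret and Tamagawa~\cite{CaTa} from abelian varieties to Drinfeld modules. After passing to a connected component of $S$ and replacing $S$ by a finite \'etale cover (which only affects the eventual constant $N$), I may assume $S$ is a smooth geometrically connected curve over $L$ carrying a Drinfeld $A$-module $\phi$ of generic rank $r$ and constant height $h$ at $\fp$. Let $\eta$ denote the generic point of $S$; the Tate module $T_\fp(\phi_\eta)$ is a free $A_\fp$-module of rank $r-h$ carrying the Galois representation
$$
\rho_\fp\colon \pi_1(S,\bar\eta)\To \GL_{r-h}(A_\fp).
$$

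For each $n\geq 1$, let $S_n\to S$ be the \'etale cover parametrizing points of exact order $\fp^n$ on $\phi$; equivalently, it is the cover corresponding to the stabilizer, inside $\rho_\fp(\pi_1(S,\bar\eta))$, of a point of exact order $\fp^n$ in $T_\fp(\phi_\eta)/\fp^n T_\fp(\phi_\eta)$. An $L$-rational torsion point of exact order $\fp^n$ on some fibre $\phi_s$ (with $s\in S(L)$) lifts to an $L$-point of $S_n$, so the theorem reduces to showing that $S_n(L)=\emptyset$ for all $n$ sufficiently large.

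The strategy to achieve this is three-fold. First, I would invoke the Drinfeld-module analogue of Serre's open image theorem, proved in various levels of generality by Pink and Pink--R\"utsche, to conclude that the image of $\rho_\fp$ is open in its Zariski closure; this forces the degree $[S_n:S]$ to grow like $|\fp|^{(r-h)n}$ up to a bounded index. Second, a Riemann--Hurwitz computation — using the ramification of $S_n\to S$ above the divisor of bad reduction of $\phi$ (controlled by the analytic uniformization of~\S\ref{ssDM}) — shows that the geometric genus of the smooth compactification $\overline{S_n}$ tends to infinity with $n$. Third, to conclude that $S_n(L)$ is empty for large $n$, one splits into an isotrivial part, which is handled directly since an isotrivial Drinfeld-module family yields only finitely many torsion configurations, and a non-isotrivial part, where Samuel's function-field version of Mordell's conjecture applies once $g(\overline{S_n})\geq 2$.

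The main obstacle is precisely the \emph{uniformity} in $n$: Samuel's theorem, applied to a fixed $\overline{S_n}$, yields only a finite bound on $S_n(L)$ that a priori grows with $n$, whereas the statement requires $S_n(L)=\emptyset$ for all $n$ beyond a threshold. Overcoming this is the Cadoret--Tamagawa innovation and would here require a gap principle for the tower $(S_n)_{n\geq 1}$: using a Weil-height on a common projective model of $S$ together with the openness of $\rho_\fp$, one shows that an $L$-point of $S_n$ cannot be the image of an $L$-point of $S_{n'}$ for all $n'>n$ without forcing an infinite ascending chain of $L$-points, contradicting Northcott/Samuel in the non-isotrivial range and the isotrivial dispatch otherwise. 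Getting this specialization-and-gap argument to run cleanly over the finitely generated base field $L$ in positive characteristic, and handling the interaction with the places of bad reduction of $\phi$, is where the technical heart of the proof lies.
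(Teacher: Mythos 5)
The paper you are working from does not prove this statement at all: it is quoted as Ishii's theorem \cite[Theorem 1.1]{ishii_-primary_2024}, with only the remark that it is the Drinfeld-module analogue of Cadoret--Tamagawa \cite{CaTa}. So your sketch can only be measured against that strategy, and while your skeleton is the right one (a rational point of exact order $\fp^n$ on a fibre at $s\in S(L)$ lifts to an $L$-point of the \'etale cover $S_n\to S$, so it suffices to prove $S_n(L)=\emptyset$ for $n\gg 0$), the two places where you locate the difficulty are not where it lies, and one of your steps would not run as written.

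First, the endgame. The ``gap principle'' with Weil heights and Northcott that you present as the Cadoret--Tamagawa innovation is not how their argument concludes, and in characteristic $p$ it is actively problematic (Northcott-type finiteness fails for isotrivial points, and nothing in your setup produces the claimed infinite ascending chain). The actual mechanism is elementary once the geometry is in place: for $n\geq n_0$ every component of $\overline{S_n}$ has genus $\geq 2$ and (in the non-isotrivial case) Samuel gives that $S_n(L)$ is \emph{finite}; the images $B_n$ of $S_n(L)$ in $S(L)$ form a decreasing chain of finite sets, because multiplying a point of exact order $\fp^{n+1}$ by a generator of $\fp$ gives one of exact order $\fp^n$; hence the chain stabilizes, and the stable set is empty because for each fixed $s\in S(L)$ the $\fp$-primary part of $({^{\phi_s}}L)_\tor$ is finite, $L$ being finitely generated over $F$. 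No bound on $\#S_n(L)$ uniform in $n$ is needed, but the fibrewise finiteness of torsion over $L$ --- which you never invoke --- is indispensable.

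Second, the growth step. You get degree growth of $S_n\to S$ from the Pink--R\"utsche open image theorem applied to the generic fibre, but that theorem requires $\End_{\overline{L(S)}}(\phi_\eta)=A$ and, more seriously, it controls the image of the \emph{arithmetic} Galois group of $L(S)$, whereas the genus of the geometric covers $S_{n,\overline{L}}$ is governed by the image of $\pi_1(S_{\overline{L}})$, which can be small (finite in the isotrivial case) with no contradiction to openness of the arithmetic image. Establishing the dichotomy ``large geometric monodromy versus isotriviality/extra endomorphisms'', and then controlling the ramification of $\overline{S_n}\to\overline{S}$ (the covers are \'etale over all of $S$ since $\phi$ is a Drinfeld module over $S$, so ramification sits over $\overline{S}\setminus S$, not over a ``divisor of bad reduction'', and wild ramification must be handled in the Riemann--Hurwitz count) is the technical heart, which your sketch only asserts. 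Two further corrections: since $L\supseteq F$ the $A$-characteristic of every fibre is generic, so $T_\fp(\phi_\eta)$ has rank $r$, not $r-h$; and Samuel's theorem needs non-isotriviality of the \emph{curves} $\overline{S_n}$, not of the Drinfeld family, so the isotrivial-curve case (possible when $L$ is a global function field) and your one-line dismissal of isotrivial families both require genuine arguments.
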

Besides this result, not much is currently known towards Conjecture~\ref{conjPoonen} in ranks $r\geq 3$.

We make a few comments about the analogue of Conjecture \ref{conjSUQ}. In a series of papers culminating in \cite{PR}, Richard Pink and his collaborators proved the following analogue of Serre's Open Image Theorem:  

\begin{thm}[Open Image Theorem]\label{thmOIT} Let $\phi$ be a Drinfeld $A$-module of rank $r$ over a finite extension $L$ of $F$. Assume $\End_{L^\sep}(\phi)=A$. Then there is a constant $N(\phi, L)$ depending only of $\phi$ and $L$ such that 
$$
[\GL_r(A/\fn):\rho_{\phi, \fn}(G_L)]\leq N(\phi, L) \quad \text{for all nonzero}\ \fn\lhd A. 
$$
\end{thm}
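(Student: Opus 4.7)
The plan is to deduce the theorem from openness of the adelic Galois representation
$$\rho_\phi\colon G_L \To \GL_r(\widehat{A}), \qquad \widehat{A} := \prod_{\fp} A_\fp,$$
where $\fp$ runs over primes of $A$. Once the image is open, the finite index $N(\phi,L) := [\GL_r(\widehat{A}) : \rho_\phi(G_L)]$ furnishes the required uniform bound for all finite quotients $\GL_r(A/\fn)$, since the natural projection $\GL_r(\widehat{A}) \twoheadrightarrow \GL_r(A/\fn)$ takes $\rho_\phi(G_L)$ onto $\rho_{\phi,\fn}(G_L)$.

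\textbf{Step 1 (Openness at each finite prime).} For each $\fp \neq \chr_A(L)$, the $\fp$-adic Tate module $T_\fp(\phi)$ is a free $A_\fp$-module of rank $r$, yielding $\rho_{\phi,\fp^\infty}\colon G_L \to \GL_r(A_\fp)$. The first task is to show the image is open. The hypothesis $\End_{L^{\sep}}(\phi) = A$, combined with the Drinfeld-module analogue of the Tate conjecture (proved by Taguchi and Tamagawa), identifies the commutant of $\rho_{\phi,\fp^\infty}(G_L)$ in $\End_{A_\fp}(T_\fp(\phi))$ with $A_\fp$. Then I would analyse the algebraic monodromy group $G_\fp^{\alg} \subseteq \GL_{r,F_\fp}$, i.e.\ the Zariski closure of the image. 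Using the commutant computation to rule out reducible and parabolic cases, and invoking traces of Frobenius at primes of good reduction (which lie in $A$ and are governed by Weil-type bounds) to supply enough independent semisimple elements, one classifies $G_\fp^{\alg}$ and concludes $G_\fp^{\alg} = \GL_{r,F_\fp}$. Openness of the image in $\GL_r(A_\fp)$ then follows from standard $\fp$-adic Lie theory.

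\textbf{Step 2 (Residual surjectivity for almost all primes).} Next I would show that for all but finitely many primes $\fp$, the reduction $\rho_{\phi,\fp}\colon G_L \to \GL_r(A/\fp)$ is surjective. This is proved by an Aschbacher-style case analysis of the maximal subgroups of $\GL_r(\F_{|\fp|})$: reducible and parabolic subgroups are excluded by the endomorphism hypothesis and by characteristic polynomials of Frobenii; imprimitive and tensor-product subgroups are excluded by the fact that such a subgroup would force extra endomorphisms of $\phi$ mod $\fp$ that do not lift; subfield subgroups are excluded by showing that some Frobenius trace lies in $A/\fp$ but not in any proper subfield, for $\fp$ large; and the sporadic/almost-simple classes occur only for primes of bounded norm.

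\textbf{Step 3 (Patching by Goursat).} Finally, I would combine the local statements. Step 1 gives a bound on $[\GL_r(A_\fp):\rho_{\phi,\fp^\infty}(G_L)]$ for each of the finitely many ``exceptional'' primes identified in Step 2, and Step 2 gives full surjectivity at the remaining primes. A Goursat-lemma argument over these components, together with the fact that the simple factors $\PSL_r(\F_{|\fp|})$ are pairwise non-isomorphic for distinct large $\fp$, shows that the image in the full product is open. This yields openness of $\rho_\phi$ in $\GL_r(\widehat{A})$.

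\textbf{Main obstacle.} Step 2 is the principal difficulty, exactly as in Serre's classical theorem for elliptic curves. Ruling out the tensor-indecomposable and almost-simple maximal subgroups of $\GL_r(\F_{|\fp|})$ uniformly in $\fp$, while extracting enough arithmetic information from Frobenii of $\phi$ at primes of good reduction, is where Pink and his collaborators expended their most delicate work; it is also the step most sensitive to the rank $r$ and to the precise class of endomorphism rings allowed.
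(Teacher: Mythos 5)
The paper itself contains no proof of this statement: it is quoted as the culmination of the work of Pink and his collaborators (the reference \cite{PR}), so there is no internal argument to compare yours against. Your reduction of the theorem to adelic openness is correct (the projection $\GL_r(\widehat{A})\twoheadrightarrow\GL_r(A/\fn)$ carries $\rho_\phi(G_L)$ onto $\rho_{\phi,\fn}(G_L)$, and indices can only drop under surjections), and your three-step architecture does track the actual proof in the literature: openness at each single prime $\fp$ is Pink's Mumford--Tate-type theorem, resting on Taguchi's and Tamagawa's semisimplicity and Tate-conjecture results, while residual surjectivity for almost all $\fp$ and the adelic patching are precisely the content of Pink--R\"utsche.

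As a proof, however, the proposal has genuine gaps rather than merely compressed details. Steps 1 and 2 are essentially restatements of the main theorems of those papers, and the points you gesture at are exactly the hard ones: proving $G_\fp^{\alg}=\GL_{r}$ is not a matter of ``enough independent semisimple elements'' but requires Frobenius-torus/weight arguments combined with absolute irreducibility, and the uniform-in-$\fp$ classification of residual images in the literature is carried out with Larsen--Pink-type group theory rather than an Aschbacher list whose exclusion steps you assert (``extra endomorphisms mod $\fp$ that do not lift'') without argument. More importantly, Step 3 as stated does not yield openness: $\GL_r(A/\fp)$ is not perfect, so Goursat plus the pairwise non-isomorphy of the simple factors $\PSL_r(A/\fp)$ only controls the semisimple part of the image. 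You must separately prove that the determinant image is open in $\widehat{A}^\times$ --- this is where the rank-one theory (Carlitz--Hayes explicit class field theory applied to the determinant Drinfeld module of $\phi$) enters --- and then assemble the full product via the commutator-subgroup argument in the style of Serre's proof for elliptic curves. Without that abelian ingredient the patching step fails, even granting Steps 1 and 2.
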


The analogue of Conjecture \ref{conjSUQ} is the statement that the bound $N(\phi, L)$ 
in Theorem \ref{thmOIT} can be made uniform, i.e., independent of $\phi$. 
For $r=1$ this not hard to prove, but it remains a major open question for $r\geq 2$. 

\begin{rem}
\begin{enumerate}
    \item Let $\phi$ be the rank-$2$ Drinfeld module over $F$ defined by $\phi_T=Tx+x^q-\fp x^{q^2}$. 
    Using the Weil pairing for Drinfeld modules, it is possible to show that 
    $\rho_{\phi, \fp}$ is not surjective if $\fp$ is a prime of odd degree and $q\equiv 1\Mod{4}$, 
    so the direct analogue of Conjecture \ref{conjSUQ} does not hold; see \cite[$\S$4.2]{ChenThesis}. 
    \item In \cite{ChenThesis}, extending a construction of Zywina for $r=2$, Chen proved that 
    for the rank-$r$ Drinfeld module over $F$ defined by $\phi_T=Tx+x^{q^{r-1}}+T^{q-1}x^{q^r}$ the representations $\rho_{\phi, \fn}\colon G_F\to \GL_r(A/\fn)$ are surjective for all $\fn\neq 0$, assuming $r$ is prime, $q\equiv 1\Mod{r}$, and the characteristic of $F$ is sufficiently large compared to $r$. 
\end{enumerate}
\end{rem}

\smallskip
We now turn to known results on Conjectures~\ref{conjSchweizer1} and \ref{conjSchweizer2}, some of which have consequences for Conjecture \ref{conjPoonen}. In $2010$, P\'al made a major progress by proving Conjecture~\ref{conjSchweizer2} for the case $q=2$, i.e., the field $F = \F_2(T)$.
\begin{thm}[{\cite[Theorem 1.2]{PalCrelle}}]\label{thm-pal}
Assume that $q=2$. If $\fp$ is any prime of $A$ with $\deg \fp\geq 3$ then $Y_0(\fp)$ has no $F$-rational points.
\end{thm}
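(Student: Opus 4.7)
The plan is to mirror Mazur's proof of Conjecture \ref{conjCJ-p}, adapted to Drinfeld modular curves. Suppose for contradiction that a non-cuspidal point $P\in Y_0(\fp)(F)$ exists; it corresponds to a pair $(\phi, C_\fp)$, where $\phi$ is a rank-$2$ Drinfeld $A$-module over $F$ and $C_\fp\subset \phi[\fp]$ is an $F$-rational cyclic $A$-submodule of order $\fp$. Since $[0]$ and $[\infty]$ are the two $F$-rational cusps of $X_0(\fp)$, consider the divisor class $D=[(P)-(\infty)]\in J_0(\fp)(F)$; the aim is to show $D=0$, forcing $P=[\infty]$ and contradicting the non-cuspidality of $P$.

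The argument proceeds in two main steps. First, one shows $D$ is torsion of bounded order. Using Drinfeld's Reciprocity Law and the Shimura construction from Section \ref{subsection-HC}, one decomposes $J_0(\fp)$ up to isogeny into newform quotients, and all $F$-rational points from $X_0(\fp)$ project to torsion on the ``Eisenstein quotient'' of order dividing the order of the cuspidal divisor group $\cC(\fp)$. By a computation along the lines of $\dlog(\Delta/\Delta_\fp)$ sketched in Example \ref{exampleEis}, $\cC(\fp)$ is cyclic of order $n=(|\fp|-1)/(q-1)=|\fp|-1$ (using $q=2$), generated by $[(0)-(\infty)]$. So $nD=0$ in the Eisenstein quotient. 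Second, one analyses reduction at $\infty$: $J_0(\fp)$ has purely toric reduction there with component group $\Phi_{\fp,\infty}$ cyclic of order $n$ and generator the image of $[(\infty)-(0)]$. Injectivity of the reduction map on prime-to-characteristic torsion (the function-field analogue of the fact cited in \cite{Katz}) at an auxiliary place of good reduction of $\phi$, combined with Hecke compatibility and the multiplicative formulas for $T_\fq$ (Section \ref{subsection-HC}), shows that the image of $D$ in $\Phi_{\fp,\infty}$ lies in the subgroup generated by the image of $\cC(\fp)$.

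To conclude $D=0$, one invokes Mazur's formal immersion criterion. This will be the main obstacle: it requires an injectivity statement for the induced map from a formal neighbourhood of $[\infty]$ in $X_0(\fp)$ to the Eisenstein quotient modulo a suitably chosen auxiliary place, which classically rests on the Gorenstein property of the completion of $\T(\fp)$ at an Eisenstein maximal ideal $\fm$. In the Drinfeld setting, this translates into showing that the action of $\T(\fp)/\fE(\fp)$ on $M^{0,0}_{2,1}(\fp)$ (Section \ref{subsection-DMF}) makes it a free module of rank $1$, to be verified on $t$-expansions at $[\infty]$, building on Tamagawa's analysis \cite{Tamagawa95} of the Eisenstein ideal for Drinfeld modular Jacobians. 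The restriction to $q=2$ is crucial here: it eliminates the $\F_q^\times$-character decomposition of Drinfeld modular forms and keeps the local structure of $\T(\fp)_\fm$ simple enough that the Gorenstein property, and hence the formal immersion, can be established by direct computation, which is the technical heart of the proof.
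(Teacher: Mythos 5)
Your overall template (a finite optimal quotient of $J_0(\fp)$ plus a formal immersion along the cuspidal section at an auxiliary place) is the right Mazur-style skeleton, but the step your whole argument rests on is precisely the one that is not available, and it is not how P\'al's proof works. You propose to establish the formal immersion at a finite place by verifying, on $t$-expansions at $[\infty]$, that $M_{2,1}^{0,0}(\fp)$ is (locally at the Eisenstein maximal ideal) free of rank one over the Hecke algebra, i.e.\ a function-field analogue of the duality coming from the classical formula $a_n(f)=a_1(f|T_n)$ in \eqref{eq-a1Tnf}. No such formula is known for the action of $T_\fm$ on $t$-expansions of Drinfeld modular forms; this is exactly the obstruction described in Subsection~\ref{ssProofs}, it is the reason Theorem~\ref{thm-armana2} is conditional on its hypothesis (ii), and Armana has in fact shown that the pairing $(f,u)\mapsto b_1(f|u)$ is \emph{not} perfect when $\fp$ is prime of degree $\geq 5$ \cite{Armanapairing}, while multiplicity one fails for the related groups $\Gamma_1(T)$ and $\Gamma_1(T^n)$ \cite{BoeckleES}, \cite{HattoriTriviality}. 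Your claim that $q=2$ rescues this ``by direct computation'' is unsupported: the disappearance of the type/$\F_q^\times$-decomposition for $q=2$ does not produce the missing Hecke-expansion formula, so the technical heart of your proof has no justification as written.

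P\'al circumvents this obstruction by a genuinely different device: following Merel and Parent, he proves a variant of the formal immersion property at the place $\infty$ of $F$ rather than at a finite place of $A$, using a regular model of $X_0(\fp)$ at $\infty$, the incidence graph of its special fibre, and harmonic cochains on that graph --- a setting where the Hecke action \emph{is} well understood via Fourier coefficients (cf.\ \eqref{eqfTm} and the pairing \eqref{eqPairing}) --- together with the winding quotient, whose Mordell--Weil group is finite by Schneider's inequality \cite{SchneiderBSD}; the isogeny-character part of Mazur's method is replaced by an ad hoc argument, and the hypothesis $q=2$ is used extensively there, not through any simplification of $t$-expansions. Two further points would need repair even if a formal immersion were granted: the formal immersion does not directly yield $D=0$ and hence $P=[\infty]$ --- in Mazur's scheme it rules out certain degenerate (potentially bad) reduction behaviour of the underlying object at the chosen place, after which a separate argument must eliminate the remaining cases; and the order of $\cC(\fp)$ is $(\abs{\fp}-1)/\gcd(q^2-1,\abs{\fp}-1)$ as in \eqref{eqGekCp}, so for $q=2$ it equals $(\abs{\fp}-1)/\gcd(3,\abs{\fp}-1)$, not $\abs{\fp}-1$.
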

He derives that Conjecture \ref{conjSchweizer1} and Conjecture \ref{conjPoonen} are verified for $q = 2$ and $L = F = \F_2(T)$; cf. Theorems~1.4 and 1.6 in \cite{PalCrelle}.
In a different direction and more recently,  Ishii made another progress towards Conjecture~\ref{conjSchweizer2} for arbitrary~$q$ and levels of small degree.
\begin{thm}[{\cite[Theorem 0.4]{Ishii-isogenychar}}]\label{thm-ishiideg4}
Let $\fp$ be a prime of $A$ of degree $4$. Then $Y_0(\fp)$ has no $F$-rational points.
\end{thm}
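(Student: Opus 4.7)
The plan is to argue by contradiction using the isogeny character strategy of Mazur as adapted to Drinfeld modules by P\'al. Suppose $P \in Y_0(\fp)(F)$ for a prime $\fp$ of degree $4$; then $P$ corresponds to a pair $(\phi, C)$ with $\phi$ a rank-$2$ Drinfeld $A$-module over $F$ and $C \cong A/\fp$ an $F$-rational $A$-submodule of $\phi[\fp]$. The absolute Galois group $G_F = \Gal(F^{\sep}/F)$ acts on $C$ via an isogeny character $\chi \colon G_F \to (A/\fp)^\times$, and on $\phi[\fp]/C$ via a second character $\eta$, with $\chi \cdot \eta$ equal to a determinant-type character $\omega$ computable from the Weil pairing on $\phi[\fp]$.

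First I would carry out a place-by-place analysis of $\chi$. At primes of good reduction of $\phi$ distinct from $\fp$, the representation $\rho_{\phi, \fp}$ of \eqref{eqResRep} is unramified, so $\chi$ is too. After replacing $\phi$ by a suitable twist, one may assume the set of bad primes is minimized; since $\phi$ always admits a Tate uniformization at $\infty$, the restriction of $\chi$ to the decomposition group at $\infty$ is controlled by the $\infty$-adic valuation of the Tate parameter. At $\fp$ itself, in the ordinary-reduction case the restriction $\chi|_{I_\fp}$ is one of the two characters appearing in the connected-\'etale sequence of $\phi[\fp]$, while in the supersingular case it is a fundamental character of level $2$ of $(A/\fp)^\times$. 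These local constraints, together with the global identity $\chi \eta = \omega$, cut the admissible characters down to a short explicit list.

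Next I would embed $P$ into $J_0(\fp)(F)$ via $P \mapsto [P] - [\infty]$ and show the resulting class lies in the cuspidal subgroup. For torsion-ness, the analogue of Merel's winding-quotient argument in the Drinfeld setting --- relying on non-vanishing of special values of Hecke $L$-series for cuspidal harmonic cochains with non-Eisenstein eigenvalues --- produces a quotient of $J_0(\fp)$ of Mordell--Weil rank zero to which $[P]-[\infty]$ maps trivially, forcing the class to be torsion. Then, applying the Drinfeld analogue of Mazur's Eisenstein-ideal theorem (as in \cite{PalCrelle}), one identifies $J_0(\fp)(F)_{\tor}$ with (a subgroup of) the cuspidal divisor class group $\cC(\fp)$; for $\deg\fp = 4$, the order of $[0]-[\infty]$ in $\cC(\fp)$ is an explicit polynomial in $q$.

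Finally, I would cross-check the two lists: the characters $\chi$ surviving the local analysis must match the characters arising from the cyclic group generated by $[0]-[\infty]$, and matching these forces $P$ to be a cusp, contradicting $P \in Y_0(\fp)$. The main obstacle I anticipate is the local analysis at $\fp$ in the supersingular case: ruling out level-$2$ fundamental characters for $\deg \fp = 4$ requires detailed information about the component group of the N\'eron model of $J_0(\fp)$ at $\fp$ and the image of $\cC(\fp)$ therein. Such computations are manageable but technical in the $\deg \fp = 3$ case treated by P\'al, and they become genuinely more intricate at $\deg \fp = 4$, where the cuspidal divisor group and the Eisenstein quotient carry richer structure reflecting the higher genus of $X_0(\fp)$.
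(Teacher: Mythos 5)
Your scaffolding (isogeny character attached to $(\phi,C)$, winding quotient, P\'al's theorem identifying $J_0(\fp)(F)_\tor$ with the cuspidal group) does mirror the actual route, which is Ishii's isogeny-character analysis combined with Armana's results. But there is a genuine gap at the decisive final step. From ``the class of $[P]-[\infty]$ (or its Atkin--Lehner symmetrization) is torsion and lies in $\cC(\fp)$'' you conclude that ``matching characters forces $P$ to be a cusp.'' No such implication exists: a non-cuspidal point $P$ could a priori satisfy a linear equivalence such as $[P]+[w_\fp P]-[0]-[\infty]\sim m([0]-[\infty])$, and ruling this out is precisely where Mazur needs the formal immersion along the cuspidal section. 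As explained in Subsection~\ref{ssProofs}, that formal immersion is exactly the ingredient that is \emph{not} available over $F$ (the analogue of the formula $a_n(f)=a_1(f|T_n)$, hence the needed surjectivity on cotangent spaces, is unknown for $\Gamma_0(\fp)$). The proofs of Theorems~\ref{thm-armana1} and \ref{thm-ishiideg4} replace it by a concrete geometric argument: if $P$ were non-cuspidal, the cuspidal linear equivalence would produce a rational function on $X_0(\fp)$ of degree $\leq 2$ with poles controlled by $[P]$ and $[w_\fp P]$, contradicting Schweizer's lower bound on the gonality of $X_0(\fp)$ together with the fact that $[\infty]$ is not a Weierstrass point (cf.\ \cite[Proposition~7.6]{ArmanaANT}); Ishii then feeds this into his congruence conditions on isogeny characters, which for $\deg\fp=4$ do not suffice on their own \cite{Ishii-isogenychar}. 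Your proposal supplies neither the formal immersion nor this substitute, so the contradiction is never actually reached.

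Two smaller points. First, the claim that $[P]-[\infty]$ ``maps trivially'' to the rank-zero winding quotient, ``forcing the class to be torsion,'' is off: its image is torsion, not necessarily trivial, and torsionness of the class in $J_0(\fp)$ itself only follows because for $\deg\fp\in\{3,4\}$ the winding quotient is $J_0(\fp)$ or isogenous to $J_0(\fp)/(1+w_\fp)J_0(\fp)$ (forcing one to work with the symmetrized divisor $[P]+[w_\fp P]-[0]-[\infty]$); this is where nonvanishing of central $L$-values enters and must be justified. Second, the obstacle you anticipate (supersingular fundamental characters and the component group at $\fp$) is not the bottleneck in this setting; the documented difficulty, which your argument must confront, is the absence of the formal immersion property and the need for Armana's gonality/Weierstrass-point workaround.
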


\begin{rem}Even though Conjecture~\ref{conjSchweizer2} is not formulated for composite levels, we mention additional results of Schweizer for small degree: the curve $Y_0(\fn)$ has no $F$-rational point when 
$\fn \in \{T(T^2 + T + 1), T^3, T^2(T + 1)\}$ in $\F_2[T]$, $\fn \in \{T(T - 1)(T + 1), T^2(T - 1)\}$ in $\F_3[T]$, and when $\fn$ is the product of three distinct linear factors in $\F_4[T]$; cf. \cite{Schweizer2004, Schweizer2011}.
\end{rem}

For Conjecture~\ref{conjSchweizer1}, Schweizer proved that we always have $\deg \fm \leq 1$ in $(^\phi F)_\tor\cong A/\fm\oplus A/\fn$; cf. Proposition~4.4 in \cite{SchweizerMZ}. 
Moreover since any $F$-rational point on $Y_1(\fp)$ naturally provides an $F$-rational point on $Y_0(\fp)$, Theorems~\ref{thm-pal} and \ref{thm-ishiideg4} remain valid for the curve $Y_1(\fp)$.

\begin{rem}If $q=2$, it is also known that the curve $Y_1((T^2+T+1)^2)$ with composite level has no $F$-rational points; cf.  \cite[Theorem~10.8]{PalCrelle}.
\end{rem}

For finite extensions of~$F$, Armana has obtained partial or conditional results towards Conjectures~\ref{conjSchweizer1} and \ref{conjPoonen}. The first one focuses on levels of small degree.
\begin{thm} [{\cite[Theorem 7.5]{ArmanaANT}}]\label{thm-armana1}
Let $\fp$ be a prime of $A$.
\begin{itemize}
    \item[(i)] If $\fp$ is of degree $3$, the curve $Y_1(\fp)$ has no $L$-rational point for any extension $L/F$ of degree $\leq 2$.
    \item[(ii)]\label{thm-armana1-ii} Suppose $\fp$ is of degree $4$. Let $d\geq 1$ with  $d = 1$ if $q$ is arbitrary, $d = 2$ if $q = 5$, and $d = 3$ if $q \geq 7$. Then the curve $Y_1(\fp)$ has no $L$-rational point for any extension $L/F$ of degree $\leq d$.
    \item[(iii)] Let $\fp$ be a prime of $A$ such that 
    for any normalized Hecke eigenform~$f \in \cH_{0}(\fp,\C)$, we have $\ord_{s=1}L(f,s) \leq 1$. Then the curve $Y_1(\fp)$ has no $L$-rational points for any extension $L/F$ of degree $< \min (\deg \fp, |\fp|/(2(q^2+1)(q+1)))$. 
\end{itemize}
\end{thm}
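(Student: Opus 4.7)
The plan is to adapt the Mazur--Kamienny--Merel formal-immersion method to the Drinfeld setting, in the spirit of P\'al's proof of Theorem~\ref{thm-pal}. Suppose for contradiction that $Y_1(\fp)(L)\neq\emptyset$ for some extension $L/F$ of degree $d$ in the stated range, yielding a rank-$2$ Drinfeld module $\phi$ over $L$ equipped with an $L$-rational torsion point of exact order $\fp$. Summing the $d$ Galois conjugates of the corresponding moduli point produces an $F$-rational effective divisor of degree $d$ on $X_1(\fp)$, equivalently an $F$-point $x$ of the symmetric power $X_1(\fp)^{(d)}$ lying outside the cuspidal locus.

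I would then form the Abel--Jacobi map $\iota^{(d)}\colon X_1(\fp)^{(d)}\to J_1(\fp)$ based at the $F$-rational cusp $[\infty]$ and compose with a carefully chosen quotient $\pi\colon J_1(\fp)\to A$ such that $A(F)$ is finite. For parts~(i) and (ii) take $A$ to be built from the Eisenstein quotient $J_0(\fp)/\fE(\fp) J_0(\fp)$, whose $F$-points are finite because $\T(\fp)/\fE(\fp)$ is finite and because of the Drinfeld analogue of Mazur's descent due to Tamagawa and P\'al. For part~(iii) take $A$ to be the winding quotient associated to normalized eigenforms $f$ with $\ord_{s=1} L(f,s)\leq 1$; the hypothesis ensures, via a function-field Gross--Zagier--Kolyvagin argument for $X_0(\fp)$, that $A(F)$ is finite. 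Choose a prime $\fl\neq \fp$ of $A$ of small degree at which $X_1(\fp)$ has good reduction, and reduce $x$ modulo a place of $L$ above $\fl$. A standard geometric argument using the structure of supersingular Drinfeld modules in characteristic $\fl$, together with the fact that $\pi(\iota^{(d)}(x))\in A(F)$ is torsion and that torsion of $A(F)$ injects into the special fibre, shows that $\pi(\iota^{(d)}(x))$ and $\pi(\iota^{(d)}(d\cdot[\infty]))$ have the same reduction modulo $\fl$.

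The heart of the argument is Kamienny's formal-immersion criterion applied to $\pi\circ \iota^{(d)}$ at the point $d\cdot [\infty]$ of the special fibre: if this is a formal immersion, then the equal reductions force $x=d\cdot [\infty]$ as sections over a DVR, contradicting the non-cuspidality of the original moduli point. Via the Teitelbaum isomorphism of Section~\ref{subsection-HC}, the cotangent space of $J_0(\fp)$ at the origin is identified with $\cH_{00}(\fp,\Ci)$, and by \eqref{eqfTm} the formal-immersion condition translates into the linear independence, on the $\T(\fp)$-isotypic component of $\cH_{00}(\fp,\Ci)$ cut out by $A$, of the $d$ functionals $f\mapsto f^\ast(\fm)$ indexed by monic $\fm$ with $\deg\fm<d$. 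For~(i) and~(ii) I would verify this by a direct determinant computation on an integral basis of cuspidal harmonic cochains of level $\fp$ in degree $3$ or $4$, using Armana's explicit Fourier coefficient formulas for Drinfeld modular forms. For~(iii), the inequality $d<|\fp|/(2(q^2+1)(q+1))$ is precisely the numerical threshold beneath which Weil-type bounds on the Fourier coefficients of normalized cuspidal eigenforms of weight~$2$ force the requisite $d$-dimensional independence on the winding quotient.

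The main obstacle is the formal-immersion step in parts~(i) and~(ii) when $\fl$ must be chosen of small degree: the Hecke algebra on $\cH_0(\fp,\Z)$ is not known to be semisimple in positive characteristic (cf.\ the end of Section~\ref{subsection-DMF}), and the pairing \eqref{eqPairing} is only perfect after inverting $p$, so one must carefully control $p$-part obstructions in the Fourier determinant. The case division on $q$ and $d$ in (ii) reflects exactly when the explicit determinant can be shown to be nonzero. For~(iii) the principal technical input is the function-field analogue of Kolyvagin's theorem in the precise form needed to establish rank-zero descent on the winding quotient carved out by the analytic-rank-$\leq 1$ eigenforms.
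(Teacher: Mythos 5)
The central step of your plan---verifying Kamienny's formal-immersion criterion at a finite place $\fl$ by translating it into linear independence of the functionals $f\mapsto f^\ast(\fm)$ for $\deg\fm<d$---is precisely the step that is \emph{not} available in the Drinfeld setting, and it is the reason Theorem~\ref{thm-armana2} is only conditional. The formal immersion has to be checked on the cotangent space of the special fibre at $\fl$, i.e.\ on a space of algebraic Drinfeld modular forms of weight~$2$ in characteristic~$p$; the Teitelbaum isomorphism with $\cH_{00}(\fp,\Ci)$ and the identity \eqref{eqfTm} live over $\Ci$ at the place $\infty$ and say nothing about the Hecke action on $t$-expansions in characteristic~$p$. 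No analogue of \eqref{eq-a1Tnf} is known for Drinfeld modular forms, the relevant pairing $(f,u)\mapsto b_1(f|u)$ is not known to be perfect (this is exactly hypothesis~(ii) of Theorem~\ref{thm-armana2}), and Armana has shown it fails to be perfect for prime level of degree $\geq 5$ \cite{Armanapairing}. So your proposed ``direct determinant computation'' in (i)--(ii) and the claimed ``Weil-bound'' independence in (iii) have no known foundation; this is the very obstruction the surrounding discussion is about, and your proof does not get around it.

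The actual proof in \cite{ArmanaANT} avoids the formal immersion altogether. Under the hypotheses of (i)--(iii) the winding quotient is $J_0(\fp)$ itself or isogenous to $J_0(\fp)/(1+w_\fp)J_0(\fp)$, and finiteness of its Mordell--Weil group follows unconditionally from Schneider's inequality in the Birch and Swinnerton-Dyer conjecture over function fields \cite{SchneiderBSD}; no function-field Gross--Zagier--Kolyvagin machinery is needed (nor is it available in the form your part~(iii) requires). The role of the formal immersion is then played by two geometric inputs: the cusp $[\infty]$ is not a Weierstrass point of $X_0(\fp)$, and Schweizer's lower bound on the gonality of $X_0(\fp)$ (cf.\ \cite[Proposition~7.6]{ArmanaANT}); the latter, not Fourier-coefficient estimates, is the true source of the threshold $|\fp|/\bigl(2(q^2+1)(q+1)\bigr)$ in~(iii). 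Your Kamienny-style skeleton (symmetric powers, quotient with finite Mordell--Weil group, specialization of torsion) matches the general strategy, but without replacing the formal-immersion step by these geometric arguments the argument as written fails.
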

Given a prime $\fp$, the assumption in (iii) on the vanishing order of $L$-series at the central value is not always satisfied. However, if one believes in the philosophy that elliptic curves over~$\Q$ with rank greater than $1$ are expected to be rare and in its counterpart for elliptic curves over $F$, we should get many examples of pairs $(q,\fp$) to which statement (iii) applies. 
For instance it applies to curves $Y_1(\fp)$ for all prime levels $\fp$ of degree~$5$ in $\F_2[T]$ with $L=F$, and for $\fp=T^5-T^4-T^2-1 \in \F_3[T]$ with $[L:F]\leq 3$. There is also a similar result for $F$-rational points on $Y_0(\fp)$ when $q\geq 5$, under the same hypothesis as in (iii); cf. \cite[Theorem~7.8]{ArmanaANT}. 

The second result is conditional but with a more general conclusion towards Conjecture~\ref{conjSchweizer1}.
To state it we need some notation. Similarly to classical modular forms, it is possible to develop a theory of algebraic Drinfeld modular forms of weight~$2$ using sections of the sheaf of relative differentials on $X_0(\fp)$. Let $\mathcal{M}_{\fp}$ be the $A[1/\fp]$-module of doubly cuspidal algebraic Drinfeld modular forms of weight~$2$ and type~$1$ for $\Gamma_0(\fp)$ and let $\T_{\fp}$ be the Hecke algebra generated by all Hecke operators acting on $\mathcal{M}_{\fp}$. For a prime $\fl$ of $A$, let $\F_\fl = A/\fl$ and $\mathcal{M}_{\fp}(\F_\fl) = \mathcal{M}_\fp\otimes_{A[1/\fp]} \F_\fl$. By extending the results of Section~\ref{subsection-DMF}, any $f \in \mathcal{M}_{\fp}(\F_\fl)$ has an expansion at the cusp $[\infty]$ of the form $\sum_{i\geq 0} b_i(f) t^{1+i(q-1)}$ with coefficients $b_i(f) \in \F_\fl$. In the Hecke algebra~$\T_{\fp}$, let $I_e$ (resp. $\widetilde{I_e}$) be the annihilator ideal  analogue of the winding element (resp. the winding element modulo the characteristic of $F$); cf. \cite{ArmanaANT} for the definitions. Let $\mathcal{M}_{\fp}(\F_\fl)[\widetilde{I_e}]$ be the subspace of $\mathcal{M}_{\fp}(\F_\fl)$ annihilated by $\widetilde{I_e}$.

\begin{thm} [{\cite[Theorem 1.5]{ArmanaANT}}]\label{thm-armana2}
Let $\fp$ be a prime of degree $\geq 3$. Suppose there exist:
\begin{itemize}
    \item[(i)] A saturated ideal $I$ of $\T_{\fp}$, with annihilator denoted $\hat{I}$, satisfying $I_e \subset I \subset \tilde{I_e}$ and $\hat{I} + \tilde{I_e} = \T_{\fp}$.
    \item[(ii)] A prime $\fl$ of $A$ of degree $1$ such that the $\F_\fl$-linear map
    \[ (\T_{\fp}/\tilde{I_e}) \otimes_{\Z} \F_\fl \longrightarrow \Hom(\mathcal{M}_{\fp}(\F_\fl)[\tilde{I_e}], \F_\fl), \]
    which maps $u \in \T_{\fp}$ to the linear form $f \mapsto b_1(f|u)$, is an isomorphism.
\end{itemize}
Then:
\begin{itemize}
    \item[1)] If $\deg \fp \geq \max(q + 1, 5)$, the curve $Y_1(\fp)$ has no $L$-rational point for any extension $L/F$ of degree $\leq q$.
    \item[2)] Conjecture 
    ~\ref{conjSchweizer1} is true for the ideal $\fp$, namely $Y_1(\fp)$ has no $F$-rational points.
\end{itemize}
\end{thm}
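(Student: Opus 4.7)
The plan is to adapt Mazur's celebrated method for bounding torsion on elliptic curves, in the refined Kamienny--Merel--Parent form, to the Drinfeld setting. Suppose for contradiction that there exists a point $(\phi, P) \in Y_1(\fp)(L)$ for some extension $L/F$ of degree $d$, where $P$ generates an $A$-submodule of $\phi[\fp]$ isomorphic to $A/\fp$. Taking the sum of the $\Gal(L^{\sep}/F)$-conjugates of $[P] - [\infty]$ produces an $F$-rational divisor class on the Jacobian $J_1(\fp)$, and pushing forward along $J_1(\fp) \to J_0(\fp)$ gives an $F$-rational class $x \in J_0(\fp)(F)$. Equivalently, composing the natural morphism $X_1(\fp)^{(d)} \to J_0(\fp)$, $D \mapsto [D] - d[\infty]$, with the quotient by the ``winding quotient'' $J_e \colonequals J_0(\fp)/\hat{I} J_0(\fp)$ yields an $F$-rational morphism
\[
\psi_d : X_1(\fp)^{(d)} \longrightarrow J_e,
\]
and the point produced above lies in $\psi_d\bigl(X_1(\fp)^{(d)}(F)\bigr)$.

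The first key input is finiteness of $J_e(F)$. Using the construction of $J_e$ via an ideal $I$ sitting between $I_e$ and $\tilde{I_e}$ with $\hat{I} + \tilde{I_e} = \T_\fp$ (hypothesis (i)), one shows, by combining Drinfeld's Reciprocity Law with the function field analogue of the Kolyvagin--Logachev / Gross--Zagier philosophy for $L$-series of non-zero central value, that $J_e(F)$ is finite; the analyticity/rationality of relevant $L$-values forces the Mordell--Weil group of this quotient to be torsion. Consequently, the image $\psi_d(D)$ lies in a fixed finite group depending only on $\fp$.

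The second, and geometrically decisive, step is a formal immersion argument at the cusp $[\infty]^{(d)}\in X_1(\fp)^{(d)}$, made at a prime $\fl$ of degree~$1$ of good reduction. Using the $t$-expansion at $[\infty]$ developed in Section~\ref{subsection-DMF}, the cotangent map $(\psi_d)^{*}_{[\infty]^{(d)}}$ reduces modulo $\fl$ to the $\F_\fl$-linear pairing
\[
(\T_\fp / \tilde{I_e}) \otimes_{\Z} \F_\fl \longrightarrow \Hom(\mathcal{M}_\fp(\F_\fl)[\tilde{I_e}], \F_\fl), \qquad u \longmapsto (f \mapsto b_1(f|u)),
\]
of hypothesis (ii), and elementary symmetric-function manipulations on the first $d$ Fourier coefficients $b_1(f|T_\fm)$ reduce the injectivity of $(\psi_d)^{*}_{[\infty]^{(d)}}\otimes \F_\fl$ to this assumption — provided the degrees $\deg \fm$ of the relevant Hecke operators are bounded suitably, which is exactly where the numerical constraints $\deg \fp \geq \max(q+1, 5)$ in (1) and $d=1$ in (2) enter. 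Hence $\psi_d$ is a formal immersion at $[\infty]^{(d)}$ along the fibre over $\fl$.

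With the formal immersion in hand the contradiction is standard. The $F$-rational divisor $D$ reduces modulo $\fl$ to $d \cdot [\infty]$: indeed, the reduction at a place of good reduction of $\phi$ injects $\phi[\fp](L)$ into $\bar\phi[\fp](\F_{\fl'})$, and a careful analysis of the Drinfeld module at the bad prime $\fp$ (via the component group analogue and the Tate uniformization of $\phi$) forces the closure of $[P]$ to meet the cuspidal section $[\infty]$ modulo every degree-one prime $\fl \neq \fp$. Then $\psi_d(D)$ and $\psi_d([\infty]^{(d)})$ agree in $J_e(F) \subset J_e(\cO_{\fl})$, and since $J_e(F)$ is finite they agree in the formal group at $[\infty]^{(d)}$; the formal immersion now implies $D = d[\infty]$ in $X_1(\fp)^{(d)}(F)$, contradicting that $P$ is a non-cuspidal point. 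The main obstacle is verifying that the closure of $[P]$ really does specialize to the cusp $[\infty]$ modulo every suitable degree-one prime $\fl\neq\fp$: the Drinfeld analogue of Mazur's ``Eisenstein descent'' to control this reduction is delicate because the Drinfeld $\fl$-adic component groups behave subtly in positive characteristic, and it is precisely the necessity of avoiding $\fl = \fp$ and of working with a degree-one place that forces the bound $[L:F] \leq q$ in conclusion~(1).
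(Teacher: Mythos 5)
Your overall architecture --- pass to the winding-type quotient $J_e$ cut out by the ideal in hypothesis (i), prove its Mordell--Weil group over $F$ is finite, and run a Kamienny--Mazur formal immersion at a degree-one place $\fl$ whose cotangent criterion is exactly the pairing in hypothesis (ii) --- is the same strategy the paper outlines in Subsection \ref{ssProofs} and that Armana follows. However, two of your justifications do not hold up. First, over $F$ the finiteness of $J_e(F)$ is not obtained from any ``function field analogue of Kolyvagin--Logachev / Gross--Zagier''; no such input is used (nor needed). The tool, as the paper states, is Schneider's unconditional inequality in the Birch and Swinnerton-Dyer conjecture for abelian varieties over function fields: the Mordell--Weil rank is bounded by the order of vanishing of the $L$-function, so nonvanishing of the relevant central $L$-values (built into the construction of the quotient via $I_e$) forces finiteness. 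This is a genuinely different, and much more elementary, mechanism than the one you invoke, and citing Euler-system/height machinery here would not compile into a proof.

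Second, and more seriously, your specialization step is misjustified. You assert that the closure of $[P]$ meets the cuspidal section modulo \emph{every} degree-one prime $\fl\neq\fp$, and you justify this by an analysis ``at the bad prime $\fp$'' via component groups and Tate uniformization. Specialization to a cusp at $\fl$ is governed by the reduction of $\phi$ at the places of $L$ above $\fl$, not at $\fp$, and it only occurs when that reduction is potentially Tate-type. The missing ingredient is the exclusion of (potentially) good reduction: if $\phi$ had good reduction at a place $\fl'$ of $L$ above $\fl$, the injectivity of torsion under reduction (cf. Remark \ref{rem5.1}(2)) would embed a module isomorphic to $A/\fp$, of size $\abs{\fp}=q^{\deg\fp}$, into ${}^{\bar\phi}\F_{\fl'}$, whose underlying set has at most $q^{d}$ elements; this is absurd once $\deg\fp>d$. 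It is precisely here (together with controlling \emph{which} cusp of $X_1(\fp)$ the point hits, since there are several) that the numerical constraints $d\leq q$ and $\deg\fp\geq\max(q+1,5)$ --- compare the bound $d<\deg\fp$ in Theorem \ref{thm-armana1}(iii) --- enter the argument, and not, as you claim, merely in bounding the degrees of the Hecke operators occurring in the formal immersion criterion (that difficulty is exactly what hypothesis (ii) is designed to bypass, as a substitute for the missing analogue of \eqref{eq-a1Tnf}). Without the counting step your final contradiction never gets started, because you have not shown that $\psi_d(D)$ specializes to $0$ at $\fl$.
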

The assumption (i) on the existence of the ideal $I$ is likely of technical nature. The assumption (ii) is deeper: it is related to complications that arise when adapting Mazur's formal immersion argument to the Drinfeld setting. This will be discussed in Subsection~\ref{ssProofs}.


\subsection{Outline of the proofs}\label{ssProofs}

We review the main ideas behind the proofs of Theorems~\ref{thm-schweizeruniformpprimary}, \ref{thm-pal}, \ref{thm-ishiideg4}, \ref{thm-armana1} and \ref{thm-armana2}, which are mostly inspired by work of Manin and Mazur, with a focus on the differences from the classical setting and complications that arise.

\smallskip
Schweizer's uniform bound for the $\fp$-primary torsion, Theorem~\ref{thm-schweizeruniformpprimary}, is an analogue of a theorem of Kamienny and Mazur \cite{KM95}, itself a stronger version of Manin's theorem \cite{Manin} for the $p$-primary part of the torsion of elliptic curves over $\Q$. The proof of Schweizer, after Poonen \cite{PoonenUBT}, follows the same approach. The key ingredient essentially states that for any curve~$C$ over a global field $K$, the $d$-fold symmetric power $C^{(d)}$ has only finitely many $K$-rational points if $C$ does not admit a $K$-rational covering of $\mathbb{P}^1_{K}$ of degree $\leq 2d$. When $K$ is a number field, this is a theorem of Frey which derives from Faltings's theorem on rational points of subvarieties of abelian varieties. When $K$ is a function field, Schweizer has obtained a similar criterion from the Mordell-Lang conjecture for abelian varieties over function fields, proved by Hrushovski \cite{Hrushovski96}. He applied it to the Drinfeld modular curves $X_0(\fp^e)$ for primes~$\fp$ to obtain Theorem~\ref{thm-schweizeruniformpprimary}.

\smallskip
To discuss the proofs of Theorems~\ref{thm-pal}, \ref{thm-ishiideg4}, \ref{thm-armana1} and \ref{thm-armana2}, we need to recall Mazur's approach from \cite{Mazur1978} for the study of $\Q$-rational points on the classical modular curve $X_0(p)$, $p$ prime (see also \cite{BMOgg} in this volume and the overview \cite{EdixhovenBourbaki95} for additional details). Handling points defined over a number field $K$ of degree~$d$ requires Kamienny's generalized approach using the $d$-fold symmetric power of $X_0(p)$. For simplicity we focus here only on $d=1$. 

Let $P=(E,C_p) \in X_0(p)(\Q)$ be a non-cuspidal point for a prime number $p$. Assume that the genus of $X_0(p)$ is positive. The objective of Mazur's formal immersion argument is to prove that the elliptic curve $E$ has potentially good reduction at all primes $\ell\neq 2p$; cf. Corollary~4.4 in \cite{Mazur1978}. Consider the Abel-Jacobi map
$$ \begin{array}{rcl} X_0(p) & \longrightarrow & J_0(p) \\ Q & \longmapsto & (Q)-(\infty)\end{array}$$
where $J_0(p)$ is the Jacobian variety of $X_0(p)$. The main ingredient is an optimal quotient $A$ of $J_0(p)$ defined over $\Q$ which satisfies the following two properties:
\begin{enumerate}
	\item[(1)] The Mordell-Weil group $A(\Q)$ is finite;
	\item[(2)] Let $\varphi:X_0(p) \to J_0(p) \to A$, which extends over $\Z$ to $\varphi:X_0(p)_{\mathrm{sm}} \to \mathcal{A}$ where $X_0(p)_{\mathrm{sm}}$ is the largest open subset of $X_0(p)$ smooth over $\Z$ and $\mathcal{A}$ is the N\'{e}ron model of $A$ over $\Z$. When looking at the fibres at any prime $\ell$ such that $\ell \nmid 2p$, the map $\varphi_{\ell} : X_0(p)_{\F_\ell} \to \mathcal{A}_{\F_\ell}$ is a formal immersion along the cuspidal section  $[\infty]$.
\end{enumerate}
Optimal quotients satisfying (1) exist: the idea is to construct them with the property that their Hasse-Weil $L$-function does not vanish at the central value and deduce that their Mordell-Weil group is finite, in the spirit of the Birch and Swinnerton-Dyer conjecture. For $A$, one may take Mazur's Eisenstein quotient \cite{Mazur1977} or Merel's winding quotient \cite{Merel} (the latter one being the largest quotient of $J_0(p)$ satisfying this property). 

Condition~(2) is equivalent to the surjectivity of  corresponding map $\varphi_{\ell}^*:\Cot_0(A_{\F_\fl}) \to \Cot_\infty(X_0(p)_{\F_\ell})$ on the cotangent spaces, equivalently here that $\varphi_{\ell}^*$ is nonzero. It is instructive to reformulate this condition for the Abel-Jacobi map $X_0(p)_{\mathrm{sm}} \to \mathcal{J}_0(p)$ over $\Z$ in terms of $q$-expansions of cusps forms:  on the cotangent spaces at $[\infty]$, it is  simply the map
$$\begin{array}{ccc}
	S_2(\Gamma_0(p),\Z) & \longrightarrow & \Z \\
	\sum_{n\geq 1} a_n q^n & \longmapsto & a_1
\end{array}$$
where $S_2(\Gamma_0(p),\Z)$ is the space of cusp forms of weight $2$ with coefficients in $\Z$. This map is nonzero, as a consequence of the classical formula
\begin{equation}\label{eq-a1Tnf}
	\forall n\geq 1,\quad	a_n(f)=a_1(f|T_n)
 \end{equation}
 for the action of the Hecke operators $(T_n)_{n\geq 1}$ on cusp forms. For the map $\varphi_{\ell}^*$, there is a similar description involving cusp forms annihilated 
 by the Eisenstein ideal, resp. the winding ideal of the Hecke algebra. One then uses a Hecke eigenvector in $\Cot_0(A_{\F_\fl})$, which necessarily has $a_1\neq 0$. This ensures Condition~(2) is satisfied. 

Using (1) and (2), Mazur was able to prove that the elliptic curve $E$ necessarily has potentially good reduction at all primes $\ell \neq 2p$ as follows. If $E$ has potentially multiplicative reduction at~$\ell$, the point $P$ will specialize on $X_0(p)$ at $\ell$ to one of the cusps. By applying the Atkin-Lehner involution $w_p$, we may assume that it specializes to $[\infty]$, therefore the point $\varphi(P)$ specializes to $0$ at $\ell$. By the formal immersion property (2) for $\varphi$ at $\ell$, we also have $\varphi(P)\neq 0$. By (1) we also know that $\varphi(P)$ is a torsion point. This contradicts a specialization lemma for points of finite order in group schemes.

The last part of Mazur's proofs of Conjectures~\ref{conjOgg1} and \ref{conjOgg2} is to discard the cases where $E$ does not have potentially good reduction at~$\ell$ when $p$ is large enough. If the point $P$ on $X_0(p)$ comes from a torsion point of order~$p$ on $E$, this follows from known bounds for the order of the specialization of a torsion point at a prime $\ell$ with $p \neq \ell$, when $E$ has good or additive reduction at $\ell$. If $P$ comes from a rational cyclic subgroup $C_p$ of order~$p$, Mazur studies the \emph{isogeny character}, i.e., the representation $\Gal(\Q^\alg/\Q) \to \GL_1(\Z/p\Z)$ coming from the natural Galois action on $C_p$. Constraints coming from the existence of this isogeny character with the Riemann hypothesis for the reduction of~$E$ modulo $\ell \notin\{ 2,p \}$ ultimately provide a finite list of possible values for $p$.

\smallskip
In this strategy, most of the steps adapt more or less easily to the Drinfeld modular curve $X_0(\fp)$. As an optimal quotient $A$ of the Jacobian $J_0(\fp)$, one may take the Eisenstein quotient introduced by Tamagawa using the Eisenstein ideal $\fE(\fp)$, cf. Definition~\ref{defEI}; another possibility is the winding quotient of \cite{PalCrelle,ArmanaANT}. In both cases their Mordell-Weil group are finite, by Schneider's inequality in the Birch and Swinnerton-Dyer conjecture for abelian varieties over function fields \cite{SchneiderBSD}, so there is an analogue of condition~(1). The main obstacle, however, resides in the function field counterpart of the formal immersion property in condition~(2). Indeed the action of Hecke operators on the expansion of Drinfeld modular forms is not well understood and a formula similar to~\eqref{eq-a1Tnf} is lacking (on this topic see also \cite{Armana2011}). This is the reason why Theorems~\ref{thm-pal}, \ref{thm-ishiideg4}, \ref{thm-armana1} and \ref{thm-armana2} do not provide complete answers to the analogues of Ogg's conjectures. This also explains the assumption~(ii) in Theorem~\ref{thm-armana2}: if one removes the ideal $\widetilde{I_e}$ and the prime place $\fl$ from its formulation, the assumption becomes  equivalent to the perfectness of the $\Ci$-pairing between the space of Drinfeld modular forms $M_{2,1}^{0,0}(\fn)$ and its Hecke algebra, defined  by $(f,u) \mapsto b_1(f|u)$. If true, it would imply a ``multiplicity one" result in $M_{2,1}^{0,0}(\fn)$ which at the moment is open in general (it is known to fail for other congruence subgroups; cf. Böckle \cite[Example~15.4]{BoeckleES} for $\Gamma_1(T)$ and Hattori \cite[Theorem~1]{HattoriTriviality} for $\Gamma_1(T^n)$ -- these are strong indications that multiplicity one could fail for $\Gamma_0(\fn)$). Moreover Armana has recently established that this pairing is not perfect in a quite general case, namely when $\fn$ is prime of degree~$\geq 5$; cf. \cite{Armanapairing}. Although it is not enough to show that the version of this perfectness stated in the assumption~(ii) is not satisfied, it confirms the severity of the obstruction that arises when adapting Mazur's method.

Because of this obstruction, the unconditional statements that we currently know, such as Theorems~\ref{thm-pal}, \ref{thm-ishiideg4} and \ref{thm-armana1}, employ workarounds or variants of the formal immersion property at $\ell$. They are essential of two types:
\begin{itemize}
\item Following an idea of Merel and Parent for classical modular curves, P\'{a}l establishes a variant of the formal immersion property at the place $\infty$ of $F$, instead of a finite place~$\fl$ of $A$; cf. Proposition~7.14 in \cite{PalCrelle}. To prove this variant, he constructs a regular model of the curve $X_0(\fp)$ at $\infty$, uses the incidence graph of the fiber of this model and harmonic cochains on this graph. The study of isogeny characters is replaced with an ad hoc argument. P\'{a}l's strategy uses extensively the hypothesis $q=2$ and ultimately provides Theorem~\ref{thm-pal}.
\item 
If $\fp$ has degree $3$ or $4$, or if the prime $\fp$ is such that for any normalized Hecke eigenform~$f \in \cH_{0}(\fp,\C)$, we have $\ord_{s=1}L(f,s) \leq 1$, the situation becomes simpler. The winding quotient is then $J_0(\fp)$ or isogenous to $J_0(\fp)/(1+w_\fp) J_0(\fp)$ where $w_\fp$ denotes the Atkin-Lehner involution. In this situation, Armana is able to avoid the formal immersion property thanks to the fact that the cusp $[\infty]$ is not a Weierstrass point on $X_0(\fp)$ and to a lower bound of Schweizer on the gonality of $X_0(\fp)$; cf. \cite[Proposition~7.6]{ArmanaANT}. These arguments lead to Theorem~\ref{thm-armana1}.
\par Ishii's work \cite{Ishii-isogenychar}, which is 
a general study of isogeny characters coming from rank-$2$ Drinfeld modules as in Mazur's, is also based on this workaround. Using congruences coming from these characters, he obtains several families of conditions which ensure that $Y_0(\fp)$ has no $F$-rational points; cf. \cite[Theorems~0.1, 0.2, 0.3]{Ishii-isogenychar}. The conditions he combines are of three different types: on $q$, on the prime $\fp$ and on the set of finite places where the Drinfeld module has potentially good reduction. Here again, it is not possible to reach the full potential of Mazur's approach because the formal immersion argument is missing in general. Ishii obtains Theorem~\ref{thm-ishiideg4} for rational points on $Y_0(\fp)$ if $\fp$ has degree~$4$ by combining his work on isogeny characters and Armana's replacement for the formal immersion property in this case. 
\end{itemize}


\section{Torsion of the Jacobian of $X_0(\fn)$}\label{sDJT=C}

\subsection{Cuspidal divisor group} 
For a nonzero ideal $\fn\lhd A$, let $X_0(\fn)$ denote the Drinfeld modular curve introduced in Section \ref{sDM}. Its set of cusps is $X_0(\fn)(\C_\infty)-Y_0(\fn)(\C_\infty)$. Let $J_0(\fn)$ be the Jacobian variety of $X_0(\fn)$. Similar to $\cC_N$ appearing in Conjecture \ref{conjCJ-N}, one defines \textit{cuspidal divisor group} $\cC_\fn$ as the subgroup of $J_0(\fn)$ generated by the divisor classes $[c]-[c']$ of differences of all cusps.

One interesting feature of the theory of Drinfeld modules is that Drinfeld modules of rank $r\geq 3$ do not really have classical analogues (Drinfeld modules of rank $1$ are similar to the multiplicative group of a field and Drinfeld modules of rank $2$ are similar to elliptic curves). Such Drinfeld modules can be defined and studied using a single equation $\phi_T$ like elliptic curves, but the Galois representations arising from their torsion points have images in $\GL_r$ and their modular varieties are $(r-1)$ dimensional. Hence, Drinfeld modules of rank $r\geq 3$ are more complicated than elliptic curves but simpler than abelian varieties. This presents the intriguing possibility of proving results for these Drinfeld modules which are known for elliptic curves but perhaps unknown or very hard for abelian varieties. One such result is the finiteness of the cuspidal divisor group of their modular varieties.

For $r\geq 3$, there are different possible compactifications of $Y_0^r(\fn)$ with desirable properties; cf. \cite{Kapranov} and \cite{FKS}. The Satake compactifications of Drinfeld modular varieties were constructed (at different levels of generality and details of proof) by Gekeler \cite{GekelerSatake}, \cite{GekelerDMHR4}, Kapranov \cite{Kapranov}, Pink \cite{Pink}, and H\"aberli \cite{Haberli}. The constructions by Gekeler, H\"aberli, and Kapranov are rigid-analytic, whereas Pink's construction is algebro-geometric. H\"aberli also proved that the analytic and algebraic Satake compactifications give the same variety. 
The Satake compactification 
can be described analytically as follows. For $\boz=(z_1, \dots, z_r)\in \p^{r-1}(\Ci)$, define $d(\boz)=\dim_F(Fz_1+\cdots+Fz_r)$ and $d_\infty(\boz)=\dim_{\Fi}(\Fi z_1+\cdots+\Fi z_r)$. Then $1\leq d_\infty(\boz)\leq d(\boz)\leq r$ and $\Omega^r=\{\boz\mid d_\infty(\boz)=r\}$. More generally, for $1\leq i\leq r$, put 
$$
\Omega^{i, r} = \{\boz\mid d_\infty(\boz)=d(\boz)=i\},
$$
and $\overline{\Omega^r}=\cup_{1\leq i\leq r} \Omega^{i, r}$. The space $\overline{\Omega^r}$ 
is invariant under the action of $\G_0(\fn)$ by linear fractional transformations. 
The quotient 
$X_0^r(\fn)(\C_\infty)=\G_0(\fn)\bs \overline{\Omega^r}$ is a projective connected normal variety over $\Ci$ of dimension $r-1$ 
containing $Y_0^r(\fn)$ as an open subvariety, which has a canonical model over $F$.
The \textit{cusps} of $X_0^r(\fn)$ are the (geometrically) irreducible components of 
$X_0^r(\fn)(\C_\infty)-Y_0^r(\fn)(\C_\infty)$ of dimension $r-2$.  The \textit{cuspidal divisor group} $\cC^r_\fn$ of $X^r_0(\fn)$ is the subgroup of the divisor class group of $X^r_0(\fn)$ generated by the Weil divisors $[c]-[c']$, where $c$ and $c'$ run over the cusps of $X^r_0(\fn)$. The analogue of the result of Manin and Drinfeld in this setting is the following: 

\begin{thm}
    $\cC_\fn^r$ is a finite group.  
\end{thm}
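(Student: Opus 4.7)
The plan is to generalize the classical Manin--Drinfeld argument via modular units to the higher-dimensional setting. First I would note that $X_0^r(\fn)$, being the projective normal variety obtained from the Satake compactification, has only finitely many cuspidal components (the quotient $\G_0(\fn)\bs(\overline{\Omega^r}-\Omega^r)$ is a finite complex of dimension $r-2$ by reduction theory for the arithmetic group $\G_0(\fn)\subset \GL_r(A)$). Consequently $\cC_\fn^r$ is a finitely generated abelian group, and it suffices to show that every generator $[c]-[c']$ is torsion in the divisor class group of $X_0^r(\fn)$.

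Next, I would invoke the Gekeler--van der Put sequence \eqref{eqGvdP}, restricted to the $\G_0(\fn)$-invariants, which produces $\G_0(\fn)$-invariant modular units on $\Omega^r$ from $\G_0(\fn)$-invariant $\Z$-valued harmonic $1$-cochains on $\sB^r$. Any such unit $u$ descends to a rational function on $X_0^r(\fn)$ whose divisor is supported on the cuspidal boundary, giving a homomorphism
$$
\mathrm{div}\colon \cO(Y_0^r(\fn))^\times/\Ci^\times \To \cC_\fn^r.
$$
The image lies in the degree-zero part of $\cC_\fn^r$, and the cokernel is precisely what must be shown to be torsion; a useful reformulation is that the cokernel of $\mathrm{div}$ is a finitely generated subquotient, so finiteness of the cokernel already implies the desired statement.

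To produce enough modular units, I would use the Drinfeld discriminant $\Delta_r(\boz)$, which is nowhere vanishing on $\Omega^r$ and transforms as a Drinfeld modular form of weight $q^r-1$ for $\GL_r(A)$. Its $\GL_r(F)$-translates $\Delta_r(\gamma\boz)$ yield modular units after forming multiplicative ratios $\prod_i \Delta_r(\gamma_i\boz)^{n_i}$ with $\sum_i n_i=0$, so that the weight cancels and one obtains genuine $\G_0(\fn)$-invariant rational functions on $X_0^r(\fn)$. For example, for a prime $\fp\mid \fn$ the ratio $\Delta_r(\boz)/\Delta_r(\fp\boz)$ is such a unit, and its divisor is a non-trivial cuspidal divisor by the analysis of $\Delta_r$ near the boundary of $\overline{\Omega^r}$. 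The decisive step is to show that, as $\gamma$ varies over a suitable system of coset representatives, the resulting divisors span a finite-index subgroup of the degree-zero part of $\cC_\fn^r$. I would do this by a Hecke-theoretic argument: these discriminant-ratio divisors are Hecke eigenvectors with Eisenstein eigenvalues of the form $|\fq|^{r-1}+|\fq|^{r-2}+\cdots+1$ (polynomials in $|\fq|$ with positive coefficients strictly larger in absolute value than any cuspidal Hecke eigenvalue), and by Weil-type bounds coming from the Drinfeld--Langlands correspondence in the function-field setting, no cuspidal class can satisfy the same Eisenstein system of eigenvalue equations; hence the Eisenstein part of the relevant cohomology is finite-dimensional over $\Q$ and the span of the discriminant divisors has finite index.

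The main obstacle I anticipate is precisely this last step in higher rank: the Hecke theory on $\G_0(\fn)$-invariant harmonic $1$-cochains of $\sB^r$ for $r\geq 3$ is considerably less developed than in rank $2$, and the Eisenstein-versus-cuspidal separation requires Weil-type bounds that, in full generality, rely on deep inputs from the work of Lafforgue and others. A more hands-on alternative would be to bypass this by an explicit construction of Siegel-type units $g_\alpha$ indexed by $\alpha\in (A/\fn)^r\smallsetminus\{0\}$ via specialization of the exponential $e_\La$ at $\fn$-torsion sections of the universal Drinfeld module, together with their distribution relations; the combinatorics of cusps at higher rank, parametrized by double cosets $\G_0(\fn)\bs \GL_r(F)/P$ for standard parabolics $P$, is more intricate than in rank $2$, and carefully tracking the divisors of these units through that combinatorics would be the technical heart of this alternative route.
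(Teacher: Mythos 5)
Your starting point---finitely many cusps, reduction to showing the classes $[c]-[c']$ are torsion, and the construction of modular units from ratios of Drinfeld discriminants such as $\Delta_r(\boz)/\Delta_r(\fm\boz)$ for $\fm\mid\fn$---is exactly the route taken in the sources the paper cites (Gekeler for $r=2$, Kapranov and later treatments for $r\geq 3$). But the decisive step there is not the Hecke-theoretic separation you propose: one computes the orders of vanishing of $\Delta_r$ along the cuspidal components (in practice via the Gekeler--van der Put map \eqref{eqGvdP}, i.e., by evaluating $\dlog$ of the discriminant ratios on the building) and verifies directly, by an explicit combinatorial analysis of the boundary, that the divisors of these units generate a finite-index subgroup of the group of degree-zero cuspidal divisors. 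You defer precisely this verification to an Eisenstein-eigenvalue/Weil-bound argument, and that is where the proposal has a genuine gap. For $r\geq 3$ the variety $X_0^r(\fn)$ is a normal projective variety of dimension $r-1$, in general singular; there is no Jacobian, and the classical Manin--Drinfeld mechanism (``the cuspidal class is a $T_\fq$-eigenvector with eigenvalue $|\fq|^{r-1}+\cdots+1$, which exceeds cuspidal eigenvalues, so $T_\fq-(|\fq|^{r-1}+\cdots+1)$ kills it up to torsion'') has no off-the-shelf meaning for the divisor class group of such a variety. To run it you would need to place the relevant subgroup of the divisor class group inside an algebraic group carrying a Hecke action whose ``cuspidal part'' is governed by Ramanujan-type bounds; invoking the Drinfeld--Langlands correspondence and Lafforgue does not by itself connect automorphic eigenvalue bounds to the Picard group of the Satake compactification. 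In short, the step carrying all the content of the theorem is the one you have not supplied, and the mechanism you propose in its place is both unexecuted and, as stated, unfounded in this generality.

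A smaller but real slip: the homomorphism you write, $\mathrm{div}\colon \cO(Y_0^r(\fn))^\times/\Ci^\times\to\cC_\fn^r$, is the zero map, since the divisor of a unit is principal and $\cC_\fn^r$ lives in the divisor class group. What you want is $\mathrm{div}$ landing in the group of degree-zero cuspidal \emph{divisors}, with $\cC_\fn^r$ the quotient of that group by (its intersection with) the principal divisors; finiteness of $\cC_\fn^r$ is then equivalent to the image of $\mathrm{div}$ having finite index there, which is again exactly the explicit computation at the cusps that the cited proofs carry out and that your sketch skips.
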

\begin{proof}
    For $r=2$, this was proved by Gekeler in \cite{GekelerCrelle1984}. For $r\geq 3$ this is a result of Kapranov \cite{Kapranov}; see also \cite[Theorem 7.8]{PW-MA} and \cite[Theorem 10.7]{GekelerVII}. 
The general idea of the proof is to construct sufficiently many modular units on $X_0^r(\fn)$, i.e., functions whose divisors are supported on the cusps, so that the subgroup of degree-$0$ divisors of these functions has finite index in the group generated by all $[c]-[c']$. A natural method for constructing modular units uses the Drinfeld discriminant functions $\Delta_{r}(z_1, \dots, z_r)$. Recall that $\Delta_{r}$ does not vanish on $\Omega^r$. One easily shows that $\Delta_{r}(z_1, \dots, z_r)/\Delta_{r}(\fm z_1, \dots, z_r)$ is invariant under $\G_0^r(\fn)$ for any $\fm\mid \fn$, hence defines a modular unit on $X_0^r(\fn)$ whose divisor can be computed from the order of vanishing of $\Delta_{r}(\boz)$ at the cusps. 
\end{proof}

Computing the group structure of $\cC_\fn^r$ is much harder than proving that this group is finite. For classical modular curves an important tool for solving this problem is a result of Ligozat \cite[Proposition 3.2.1]{Ligozat1975}, which gives necessary and sufficient conditions for a function of the form $\prod_{m\mid N}\eta(m z)^{s_m}$, $s_m\in \Z$, to be a modular unit on $X_0(N)$, where $\eta(z)$ is the 
Dedekind eta function. (Recall that $\eta(z)$ is the $24$-th root of the classical discriminant function.) 
However, such a strong result does not seem to hold over function fields; for example, 
$\Delta_r$ only has a $(q-1)$-th root in $\cO(\Omega^r)^\times$, but the analogue of $24$ in this context is  $(q-1)(q^2-1)$. 

Instead, one follows a different strategy, specific to function fields, which was initiated by 
Gekeler in \cite{gekeler_1997}. The key result here is the exact sequence \eqref{eqGvdP}. 
Given a modular unit $u(\boz)$ on $X_0^r(\fn)$, 
to determine the maximal root that can be extracted from $u(\boz)$ (and thus to determine the order of the underlying cuspidal divisor) one applies $\mathrm{dlog}$ to $u(\boz)$ to get a combinatorial $\Z$-valued function $\mathrm{dlog}(u)\in \Har^1(\sB^r, \Z)$. If one is able to compute the value of $\mathrm{dlog}(u)$ on a few well-chosen edges of $\sB^r$, then $u$ has an $m$-th root only if $m$  divides the gcd of these values. Gekeler utilized this strategy in \cite{gekeler_1997} to compute the order of the divisor $[0] - [\infty]$ on $X_0^2(\fn)$, where $\fn$ is either a prime or a square of a prime. Since $[0]$ and $[\infty]$ are the only cusps when $\fn=\fp$ is prime and these cusps are rational, Gekeler's result implies that $\cC^2_\fp$ is cyclic of order 
\begin{equation}\label{eqGekCp}
    \frac{\abs{\fp}-1}{\gcd(q^2-1, \abs{\fp}-1)}. 
\end{equation}
Ho \cite{ho_rational_2024} extended Gekeler's computation to arbitrary prime power levels $\fp^s$. We note that the formula for the order of $[0] - [\infty]$ on $X_0^2(\fp^s)$ when $s \geq 3$ does not specialize to the cases $s = 1$ or $s = 2$.  

In higher ranks the only result so far is the following generalization of \eqref{eqGekCp} proved in \cite{PW-MA}: $\cC^r_\fp$ is cyclic of order 
\begin{equation}\label{eqPapWei}
    \frac{\abs{\fp}^{r-1}-1}{\gcd(q^r-1, \abs{\fp}-1)}. 
\end{equation}

In analogy with $\cC(N)$, define the \textit{rational cuspidal divisor class group} $\cC(\fn) := \cC^2(\fn)$ of $X_0(\fn)$ to be the subgroup of $J_0(\fn)$ generated by the linear equivalence classes of the degree $0$ rational cuspidal divisors on $X_0(\fn)$. The group $\cC(\fn)$ is explicitly computed in the following cases: 
\begin{itemize}
    \item $\deg(\fn)=3$; see \cite{papikian_eisenstein_2016}. 
    \item $\fn=\fp_1\fp_2$ is a product of two distinct primes; see \cite[$\S$6]{papikian_eisenstein_2015}. 
    \item $\fn=\fp_1\cdots \fp_s$ is square-free, but $\cC(\fn)_\ell$ is determined only for $\ell\nmid q-1$; see \cite{papikian_rational_2017}. In this case, $\cC(\fn)_p=0$, where $p$ is the characteristic of $F$. Moreover, $\cC(\fn)_\ell$ naturally decomposes into a direct sum of $2^{s-1}$ cyclic 
    subgroups each of which is an eigenspace with respect to the Atkin-Lehner involutions of $J_0(\fn)$. 
    \item $\fn=\fp^s$ is a prime power; see \cite{ho_rational_2024}. An interesting fact in this case is that $\cC(\fn)$ is ``mostly" $p$-primary and quite large.
\end{itemize}

\begin{rem}
When $\deg(\fn)=3$ or $\fn$ is square-free, all the cusps of $X_0(\fn)$ are rational by \cite[Proposition 6.7]{gekeler_invariants_2001} and \cite[Lemma 3.1]{papikian_eisenstein_2016}, so $\cC(\fn) = \cC_\fn$, but generally $\cC(\fn)$ can be strictly smaller than $\cC_\fn$.
\end{rem}


\subsection{Analogue of Ogg's conjecture} 
Let $J_0(\fn)(F)_\tor$ be the rational torsion subgroup of the Jacobian variety $J_0(\fn)$. By the Lang-N\'eron theorem, $J_0(\fn)(F)_\tor$ is a finite abelian group. Let, as earlier, 
$\cC_\fn$ be the cuspidal subgroup of $J_0(\fn)$, $\cC_\fn(F)$ be the subgroup of rational points on $\cC_\fn$, and $\cC(\fn)$ be the rational cuspidal divisor class group of $X_0(\fn)$. We have $$\cC(\fn) \subseteq \cC_\fn(F) \subseteq J_0(\fn)(F)_\tor.$$ 
The analogue of Conjecture \ref{conjCJ-N} in this setting is the following: 

\begin{conj*}{CJD-$\fn$} \label{conjCJ-fn}
For any nonzero $\fn \in A$, $$\cC(\fn)= \cC_\fn(F)= J_0(\fn)(F)_\tor.$$
\end{conj*}

\begin{rem}
    In view of the formula \eqref{eqPapWei}, which is valid for all $r\geq 2$, 
        one might wonder whether Ogg's conjecture extends to higher rank Drinfeld modular varieties: 
        Is the $F$-rational torsion subgroup of the Picard group of $X_0^r(\fn)$ 
        cuspidal, i.e., generated by the $F$-rational elements of $C^r_\fn$? 
\end{rem}

There is a natural morphism $X_1(\fn)\to X_0(\fn)$, which, by Picard functoriality, induces a 
morphism $J_0(\fn)\to J_1(\fn)$. The kernel of this morphism, denoted $\cS(\fn)$, is finite; cf. 
\cite[$\S$8]{papikian_eisenstein_2015}. This is 
the \textit{Shimura subgroup} in this context.  
Moreover, when $\fn$ is square-free, $\cS(\fn)$ is a $\mu$-type  
\'etale group-scheme over $F$, so its order is coprime to $p$; 
for the proof of this claim, as well as the calculation of the group structure of $\cS(\fn)$, 
we refer to \cite[$\S$8]{papikian_eisenstein_2015}. 
The analogue of Conjecture \ref{conjSJ-p} is the following: 

\begin{conj*}{SJD-$\fp$}\label{conjSJ-fp} For a prime $\fp\in A$, the maximal $\mu$-type subgroup $\cM(\fp)$ of $J_0(\fp)$ is $\cS(\fp)$. 
\end{conj*}

\begin{rem} For general square-free $\fn$, the maximal $\mu$-type subgroup $\cM(\fn)$ of $J_0(\fn)$ 
can be strictly larger than $\cS(\fn)$; we refer to \cite[$\S$8]{papikian_eisenstein_2015} 
for an explicit example. Also, if $\fn$ is not square-free, then $\cS(\fn)$ has a non-trivial 
connected subgroup-scheme. Thus, Conjecture \ref{conjSJ-fp} does not extend to general $\fn$. But we expect, in analogy with Vatsal's result, that $\cM(\fn)_\ell = \cS(\fn)_\ell$ for any square-free $\fn$ and any prime $\ell\nmid (q-1)$. 
\end{rem}

Conjecture \ref{conjCJ-fn} for prime $\fn=\fp$, as well as Conjecture \ref{conjSJ-fp}, were proved by Ambrus P\'al in \cite{PalDoc}. Overall, P\'al's approach is modeled on Mazur's approach, but with some interesting differences that we will explain below. Combining P\'al's approach with some ideas from  \cite{ohta_eisenstein_2014} and \cite{YOO_torsion_2023}, one can prove partial results towards  
Conjecture \ref{conjCJ-fn}:
\begin{itemize}
    \item When $\fn$ is square-free, we have $\cC(\fn)_{\ell} = (J_0(\fn)(F)_\tor)_{\ell}$ for any prime $\ell\nmid q(q-1)$; see \cite{papikian_rational_2017}.
    \item When $\fn=\fp^s$ is a prime power, we have $\cC(\fp^s)_{\ell} = (J_0(\fp^s)(F)_\tor)_{\ell}$ for any prime $\ell\nmid q(q-1)$; see \cite{ho_torsion_2024}.
\end{itemize}

\subsection{Outline of the proofs}
It is instructive to first recall Mazur's strategy for proving Conjecture \ref{conjCJ-p}, up to $2$-primary torsion. Let $\T(p)$ be the Hecke algebra acting on $J_0(p)$ and $\fE(p)\subset \T(p)$ be the Eisenstein ideal generated by the elements $1+\ell-T_\ell$ for all prime $\ell\neq p$ and by $w_p+1$, where $w_p$ is the Atkin-Lehner involution. 
Using properties of modular forms, Mazur proved that $\T(p)/\fE(p)\cong \Z/n\Z$, where $n=(p-1)/\gcd(12, p-1)$. Let $\cJ$ denote the N\'eron model of $J_0(p)$ over $\Z$, and $\cJ_{\F_\ell}$ denote its fibre at $\ell$. Let $\cG_\ell\colonequals \cJ_{\F_\ell}(\overline{\F}_\ell)[\ell^\infty]$ be the \'etale part of the $\ell$-divisible group of $\cJ_{\F_\ell}$. The Hecke algebra $\T(p)$ acts on $\cJ_{\F_\ell}$. By a theorem of Cartier and Serre, for odd $\ell\neq p$ there is an injection 
$$
\cG_\ell[\ell]\otimes_{\F_\ell} \overline{\F}_\ell\longhookrightarrow H^0(X_0(p)_{\F_\ell}, \Omega^1_{X_0(p)_{\F_\ell}})
$$
compatible with the action of $\T(p)$. Suppose $\ell$ divides $n$, and denote $\fP_\ell\lhd \T(p)$ 
the Eisenstein maximal ideal $(\ell, \fE(p))$. From the above injection, one deduces that the kernel 
$\cG_\ell[\fP_\ell]$ is contained in $H^0(X_0(p)_{\F_\ell}, \Omega^1_{X_0(p)_{\F_\ell}})[\fP_\ell]$. 
Now using the duality between the Hecke algebra and the space of cusp forms, one deduces that this latter space is one dimensional over $\T(\fp)/\fP_\ell\cong \F_\ell$, thus the dimension of $\cG_\ell[\fP_\ell]$ is also at most one over $\T(\fp)/\fP_\ell\cong \F_\ell$. This can be extended to show that $\cG_\ell[\fE(p)]$ is a cyclic module over $\T(p)\otimes_{\Z}\Z_\ell$, so $\cG_\ell[\fE(p)]$ 
is isomorphic to a subgroup of $\Z_\ell/n\Z_\ell$. Let $\cT\colonequals J_0(p)(\Q)_\tor$. The Eichler-Shimura congruence relations imply that $\fE(p)$ annihilates $\cT$; hence, if $\ell\mid \#\cT$, then $\ell\mid n$. From the N\'eron mapping property, we obtain an injection $\cT_\ell\hookrightarrow \cG_\ell[\fE(p)]$. Thus, $\# \cT_\ell\leq \Z_\ell/n\Z_\ell$. On the other hand, 
$\Z_\ell/n\Z_\ell\cong \cC(p)_\ell\hookrightarrow \cT_\ell$, so we conclude that $\cT_\ell= \cC(p)_\ell$. 

\vspace{0.1in}

Before proceeding to the proof in the function field case, we need to discuss a few more facts 
about the Jacobians of Drinfeld modular curves. 
The Jacobian $J_0(\fn)$ has a rigid-analytic uniformization over $\Fi$ as a quotient 
of a multiplicative torus by a discrete lattice; this is closely related with the fact that 
$Y_0(\fn)$ has a rigid-analytic uniformization $\G_0(\fn)\bs \Omega^2$, so $X_0(\fn)$ is a Mumford curve. The connection between the analytic uniformization of Mumford curves and their Jacobians was first explicated by Manin and Drinfeld \cite{MDCrelle1973}. In the setting of Drinfeld modular curves this was done by Gekeler and Reversat in \cite{GekelerREVERSAT}. Denote by $\overline{\G_0(\fn)}$
the maximal torsion-free abelian quotient of $\G_0(\fn)$. Using analytic theta functions, Gekeler and Reversat construct a pairing 
\begin{equation}\label{eqGRpair}
    \langle \cdot, \cdot\rangle \colon \overline{\G_0(\fn)}\times \overline{\G_0(\fn)}\to \Fi
\end{equation}
and show that it induces an exact sequence 
\begin{equation}\label{eqGRs}
0\to\overline{\G_0(\fn)}\xrightarrow{\alpha\mapsto \langle \alpha, \cdot\rangle}\Hom(\overline{\G_0(\fn)}, \C_\infty^\times)\to J_0(\fn)(\C_\infty)\to 0. 
\end{equation}
One can define Hecke operators $T_\fp$ as endomorphisms of $\overline{\G_0(\fn)}$ 
in purely group-theoretical terms as some sort of Verlagerung (see \cite[(9.3)]{GekelerREVERSAT}). 
The above exact sequence then becomes compatible with the action of $\T(\fn)$ on its three terms. 
Also, by \cite[(3.3.3)]{GekelerREVERSAT} and \cite{GN}, there is a canonical isomorphism 
$$\overline{\G_0(\fn)}\xrightarrow{\sim}\cH_0(\fn,\Z)
$$
compatible with the action of Hecke operators, so in \eqref{eqGRs} one can replace $\overline{\G_0(\fn)}$ with $\cH_0(\fn,\Z)$. 

Let $\cJ_0(\fn)$ be the N\'eron model of $J_0(\fn)$ over $\p^1_{\F_q}$. Let 
$\cJ_0(\fn)_{\F_\infty}$ be the fibre of $\cJ_0(\fn)$ over the closed point $\infty$ of $\p^1_{\F_q}$, 
let $\cJ_0(\fn)_{\F_\infty}^0$ be the connected component of the identity, and let $\Phi_\infty(\fn)\colonequals \cJ_0(\fn)_{\oF_\infty}/\cJ_0(\fn)_{\oF_\infty}^0$ be the group of 
connected components of $\cJ_0(\fn)_{\F_\infty}$. 

The valuation of the pairing \eqref{eqGRpair},  
$\ord_\infty \langle \cdot, \cdot\rangle \colon \cH_0(\fn,\Z) \times \cH_0(\fn,\Z)\to \Z$, 
is a weighted version of the cycle pairing on 
$\cH_0(\fn,\Z)$ (cf. \cite[Theorem 5.7.1]{GekelerREVERSAT}), and the uniformization sequence 
\eqref{eqGRs} induces the exact sequence  
\begin{equation}\label{eqGpairPhi}
    0\to \cH_0(\fn,\Z)\xrightarrow{f \mapsto \ord_\infty \langle f, \cdot\rangle}\Hom(\cH_0(\fn,\Z), \Z)\to \Phi_\infty(\fn)\to 0,
\end{equation}
compatible with the action of $\T(\fn)$ on its three terms; see \cite[Corollary 2.11]{GekelerJNTB2}. 
(The pairing $\ord_\infty \langle \cdot , \cdot \rangle$ can also be identified with Grothendieck's 
``monodromy pairing", so the sequence \eqref{eqGpairPhi} is a special case of a theorem of Grothendieck from Expos\'e IX in SGA 7.)

  With these preliminaries out of the way, we can now explain P\'al approach to 
  Conjecture~\ref{conjCJ-fn} for prime $\fn=\fp$, which was extended to square-free $\fn$ 
  and prime power $\fn$ in \cite{papikian_rational_2017} and \cite{ho_rational_2024}, respectively. Let $\cT(\fn)\colonequals J_0(\fn)(F)_\tor$. Let $\ell$ be a prime 
  not equal to $p$. The Eichler-Shimura congruence relations imply that $\fE(\fn)$ annihilates $\cT(\fn)_\ell$. On the other hand, by the N\'eron mapping property, there is a canonical 
  injective morphism $\cT(\fn)_\ell \hookrightarrow \cJ_0(\fn)_{\F_\infty}(\F_\infty)$. 
  Since $\cJ_0(\fn)^0_{\F_\infty}$ is a split torus, we have $\cJ_0(\fn)^0_{\F_\infty}(\F_\infty)\cong (\F_q^\times)^g$, where $g=\dim(J_0(\fn))$. Hence, if we assume that $\ell\nmid (q-1)$, then we get 
  an injective homomorphism $\cT(\fn)_\ell\hookrightarrow \Phi_\infty(\fn)_\ell[\fE(\fn)]$, where 
  the latter group is the subgroup of $\Phi_\infty(\fn)_\ell$ annihilated by the Eisenstein ideal. 
  Fix 
  some $n$ large enough so that $\ell^n$ annihilates $\Phi_\infty(\fn)_\ell$. Multiplying the 
  sequence \eqref{eqGpairPhi} by $\ell^n$ and applying the snake lemma, we get an injection 
  $$
\cT(\fn)_\ell \longhookrightarrow \cE_{00}(\fn, \Z/\ell^n\Z),
  $$
  where $\cE_{00}(\fn, \Z/\ell^n\Z)$ is the submodule of $\cH_{00}(\fn, \Z/\ell^n\Z)$ annihilated 
  by $\fE(\fn)$. 
  Using the Fourier expansions of harmonic cochains, one shows that if $f\in \cH_{00}(\fn, \Z/\ell^n\Z)$ 
  is annihilated by the Eisenstein ideal, then it is a scalar multiple of the reduction modulo $\ell^n$ 
  of a $\Z$-valued Eisenstein series (this is a ``multiplicity-one" statement). From this one deduces that $\cE_{00}(\fn, \Z/\ell^n\Z)\cong \Z_\ell/N(\fn)\Z_\ell$, where $N(\fn)$ is essentially the constant Fourier coefficient of an Eisenstein series. 
  The number $N(\fn)$ can be explicitly computed and it matches the size of $\cC(\fn)_\ell$; cf. Example \ref{exampleEis}. Since 
  $\cC(\fn)_\ell\subseteq \cT(\fn)_\ell \subseteq \cE_{00}(\fn, \Z/\ell^n\Z)\cong \Z_\ell/N(\fn)\Z_\ell$, 
  we conclude that $\cC(\fn)_\ell\cong \cT(\fn)_\ell$. Thus, overall, this argument is similar to Mazur's argument but instead of specializing $\cT(\fn)$ into the fibres of $\cJ(\fn)$ at finite primes, one uses the fibre over~$\infty$ (which does not have a direct analogue over $\Q$). 

  For primes $\ell$ dividing $q-1$, proving $\cC(\fp)_\ell=\cT(\fp)_\ell$ is much harder and 
  is closely linked to the fact that $\T(\fp)$ is locally Gorenstein at the prime ideals containing 
  $\fE(\fp)$. This is also a key fact used in the proof of Conjecture \ref{conjSJ-fp} in \cite{PalDoc}. 
  The Gorenstein  property was proved 
  by P\'al in \cite{PalDoc} by adopting Mazur's Eisenstein descent argument. 
  This property implies that $\cT(\fp)_\ell$ and $\cM(\fp)_\ell$ are dual to each other 
  for $\ell\nmid (q-1)$, and $J_0(\fp)[\fE(\fp)]_\ell=\cC(\fp)_\ell\oplus \cS(\fp)_\ell$. 
  When $\ell\mid (q-1)$, the groups $\cC(\fp)_\ell$ and $\cS(\fp)_\ell$ intersect in $J_0(\fp)$ 
  and the proof requires the construction of an auxiliary ``dihedral subgroup". 
  We will not discuss 
  the details of these arguments due to their more technical nature.

  \begin{rem}
      \begin{enumerate}
      \item P\'al also gave a second proof of the Gorenstein property of the localizations of $\T(\fp)$ at Eisenstein primes in \cite{PalIJNT} by adopting an argument of Calegari and Emerton \cite{CE}. In this approach $\T(\fp)$ is identified with a universal deformation ring $R(\fp)$, which is then shown to be generated by a single element over $\Z_\ell$ using cohomological methods. 
          \item If $N=p_1p_2$ is a product of two distinct primes, then the classical Hecke algebra $\T(N)$ is generally not locally Gorenstein at the Eisenstein primes; cf. \cite{YooMZ}. 
          The same is most likely true for $\T(\fn)$. 
          Hence, some genuinely new ideas 
          might be needed to prove $\cC(\fn)_\ell=\cT(\fn)_\ell$ when $\fn$ 
          is not prime and $\ell\mid q-1$. 
          \item The embedding $\cT(\fn)_\ell\hookrightarrow \Phi_\infty(\fn)[\fE(\fn)]$ plays an important role in the argument outlined earlier. It is a well-known fact due to Ribet and Edixhoven that the component groups of the classical modular Jacobian $J_0(N)$ are annihilated by $T_\ell-(\ell+1)$ for all $\ell\nmid N$. Thus, one might wonder whether $\Phi_\infty(\fn)[\fE(\fn)]=\Phi_\infty(\fn)$ and try to estimate the size of $\Phi_\infty(\fn)$. 
          However, it turns out that $\Phi_\infty(\fn)$ is exponentially larger than its subgroup $\Phi_\infty(\fn)[\fE(\fn)]$; cf. \cite{PapikianMJM}. 
      \end{enumerate}
  \end{rem}
  
 Finally, we discuss the $p$-primary rational torsion subgroup $\cT(\fn)_p$ of $J_0(\fn)$. 
 Let $P\in \cT(\fn)_p$ be an element of of order $p$, and let $G\cong \Z/p\Z$ be the 
 constant subgroup-scheme of $J_0(\fn)$ generated by $P$. 
 By the extension property for \'etale points of N\'eron models, $G$ extends 
 to a finite flat subgroup scheme $\cG$ of $\cJ_0(\fn)$. For a closed point $x$ of $\p^1_{\F_q}$, 
 denote the special fibre of $\cG$ at $x$ by $\cG_{\F_x}$. Suppose $J_0(\fn)$ has purely toric reduction at $x$, i.e., $\cJ^0_{\overline{\F}_x}$ is isomorphic to a product of copies of the multiplicative group $\gm_{m, \overline{\F}_x}$. In that case, if $\cG_{\F_x}$ is a subgroup scheme of $\cJ^0_{\overline{\F}_x}$, then $\cG_{\F_x}\cong \mu_p$. Since the Cartier dual of $\mu_p$ is $\Z/p\Z$ and vice versa, the Cartier dual of $\cG$ would have connected generic fibre but \'etale closed fibre, which impossible. Therefore, $\cG_{\F_x}$ is isomorphic to $\Z/p\Z$  and is a subgroup of $\Phi_x(\fn)$. 
 
 \begin{prop}[{\cite[Corollary 7.15]{PalDoc}}]\label{propTpp}
     If $\fp$ is prime, then $\cT(\fp)_p=0$. 
 \end{prop}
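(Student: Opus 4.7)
The plan is as follows. Assume, for contradiction, that $\cT(\fp)_p$ contains an element $P$ of order $p$. The discussion preceding the statement already produces the constant \'etale subgroup scheme $G=\langle P\rangle\cong \Z/p\Z$ of $J_0(\fp)$, its flat closure $\cG\subset \cJ_0(\fp)$, and, combining Cartier duality with the fact that $\cJ_0(\fp)$ has purely toric reduction at $\infty$ (so that $\cJ_0(\fp)^0_{\overline{\F}_\infty}[p]\cong \mu_p^g$ is connected), identifies $\cG_{\overline{\F}_\infty}$ with an \'etale subgroup scheme of order $p$ embedded in the component group $\Phi_\infty(\fp)$. Thus the problem reduces to showing that $\Phi_\infty(\fp)[\fE(\fp)]_p = 0$.

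To secure the annihilation by the Eisenstein ideal, I would invoke the Eichler--Shimura relation $T_\fq \equiv \Frob_\fq + |\fq|\,\Frob_\fq^{-1}$ on $J_0(\fp)[\ell^\infty]$ for primes $\fq\neq \fp,\ell$: together with the trivial action of $\Frob_\fq$ on any $F$-rational point, it yields $T_\fq P = (|\fq|+1)P$ for every prime $\fq\neq \fp$. Therefore $\cT(\fp)$, and with it the image of $\cG_{\overline{\F}_\infty}$ in $\Phi_\infty(\fp)$, is annihilated by $\fE(\fp)$.

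The key computational input is that for a prime $\fp$, the ring $\T(\fp)/\fE(\fp)$ is finite cyclic of order $N(\fp)=|\cC(\fp)|=(|\fp|-1)/\gcd(q^2-1,|\fp|-1)$; this follows from the perfectness (after inverting~$p$) of the pairing \eqref{eqPairing} together with the explicit Fourier expansion of the normalized Eisenstein harmonic cochain on $\G_0(\fp)\bs \sT$ (compare Example~\ref{exampleEis}), and is made precise in P\'al's work \cite{PalDoc}. Since $|\fp|-1=q^{\deg \fp}-1$ is coprime to $p$, the integer $N(\fp)$ is a unit in $\Z_p$. Because $N(\fp)\cdot 1=0$ in $\T(\fp)/\fE(\fp)$, the element $N(\fp)$ lies in $\fE(\fp)$, and extending scalars to $\Z_p$ gives $\fE(\fp)\otimes \Z_p = \T(\fp)\otimes \Z_p$. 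Consequently every $\Z_p$-module annihilated by $\fE(\fp)$ vanishes, so $\Phi_\infty(\fp)[\fE(\fp)]_p=0$, contradicting the nontrivial $p$-torsion element produced above.

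The main obstacle, bypassed here because $\fp$ is prime, lies in the structure of $\T(\fn)/\fE(\fn)$ at composite levels: it is no longer cyclic, and the matching of its order with $|\cC(\fn)|$ at the prime~$p$ is delicate (it would follow from a multiplicity-one statement for mod-$p$ Eisenstein harmonic cochains, which is already known to fail in some related settings), which is precisely why the analogue of Ogg's conjecture on $p$-primary torsion remains open for general $\fn$.
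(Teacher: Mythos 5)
Your reduction to a component group is the right general move, but you have routed it through the fibre at $\infty$, which is precisely where the argument cannot be closed, and both Hecke-theoretic inputs you invoke to compensate have gaps at the prime $p$ itself. First, the Eichler--Shimura step: for a point $P$ of order $p$ (the characteristic of $F$), deducing $T_\fq P=(|\fq|+1)P$ requires the reduction map at $\fq$ to be injective on $p$-power torsion, and since every place of $F$ has residue characteristic $p$ this is not automatic; it is only known at places of good \emph{ordinary} reduction (this is exactly the content of the lemma following Proposition \ref{propTpp}, and it is why the paper treats ``$\cT(\fn)_p$ is Eisenstein'' as a property to be verified, not a fact). So the blanket claim that $\fE(\fp)$ annihilates $\cT(\fp)_p$ is unjustified. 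Second, the claim $\fE(\fp)\otimes\Z_p=\T(\fp)\otimes\Z_p$: the pairing \eqref{eqPairing} is perfect only after inverting $p$, so together with the Eisenstein Fourier expansion it controls the prime-to-$p$ part of $\T(\fp)/\fE(\fp)$ and says nothing about its $p$-primary part --- which is the only part your argument needs (the survey states explicitly that computing the Eisenstein index via Fourier expansions of harmonic cochains requires inverting $p$). Finally, the remark right after Proposition \ref{propTpp} notes that $\Phi_\infty(\fn)$ usually has nontrivial $p$-torsion even for $\fn=\fp$ prime, so the embedding $\Z/p\Z\hookrightarrow\Phi_\infty(\fp)$ by itself yields no contradiction; the Eisenstein machinery you bring in to force one is exactly what is unavailable at $p$.

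The paper's proof avoids Hecke theory entirely and specializes at the prime $\fp$ instead of $\infty$: by Gekeler's Deligne--Rapoport-type analysis in \cite{GekelerUber}, $J_0(\fp)$ has purely toric reduction at $\fp$, so your Cartier-duality argument embeds $\Z/p\Z$ into $\Phi_\fp(\fp)$; Gekeler's explicit computation shows $\Phi_\fp(\fp)$ is cyclic of order $(|\fp|-1)/\gcd(|\fp|-1,q^2-1)$, which is prime to $p$, and the contradiction is immediate. To salvage your route you would need to prove both that $\cT(\fp)_p$ is Eisenstein and that the $p$-part of the Eisenstein index vanishes --- statements that are at least as hard as the proposition itself.
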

 \begin{proof} 
$J_0(\fp)$ has purely toric reduction at $\fp$, as is shown in \cite{GekelerUber} using Deligne-Rapoport type arguments. Moreover, using the structure of $X_0(\fp)_{\F_\fp}$ and Raynaud's methods, Gekeler computed in \cite{GekelerUber} that $\Phi_\fp(\fp)\cong (\abs{\fp}-1)/\gcd(\abs{\fp}-1, q^2-1)$. In particular, $\Phi_\fp(\fp)$ has no $p$-torsion. The claim of 
the proposition now follows from our earlier argument. 
 \end{proof}

The Jacobian $J_0(\fn)$ has purely toric reduction at $\fp\mid \fn$ in a few other cases when $\fn$ is the product of $\fp$ and a square-free polynomial of degree $\leq 2$. The proof of Proposition \ref{propTpp} extends to these cases to show that $\cT(\fn)_p=0$. Generally, we expect that $\cT(\fn)_p=0$ when $\fn$ is square-free, but this is currently open.
In the other extreme, when $\fn=\fp^s$ is a prime power with $s\geq 3$, we expect that $\cT(\fn)$ 
is ``mostly" $p$-primary, as this follows from Conjecture \ref{conjCJ-fn} and the computation of $\cC(\fp^s)_p$ in \cite{ho_rational_2024}; for example, $\cT(T^s)$ must be a $p$-primary group. 

\begin{rem}
    $J_0(\fn)$ has purely toric reduction at $\infty$ for any $\fn$. Unfortunately, the group $\Phi_\infty(\fn)$ usually has non-trivial $p$-torsion, even when $\fn=\fp$ is prime, so $\Phi_\infty(\fn)$ cannot be used to show that $\cT(\fn)_p=0$ for square-free $\fn$. 
    Moreover,  the structure of $\Phi_\infty(\fn)$ depends on the actual prime divisors of $\fn$, and not only on their degrees, so there is no uniform way of describing this group. 
\end{rem}

\begin{defn}
    We say that $\cT(\fn)_p$ is \textit{Eisenstein} if it is annihilated by  
    $\eta_\fp\colonequals T_\fp-(\abs{\fp}+1)$ for all prime $\fp\nmid \fn$. 
\end{defn}

\begin{lem}
If $\fp$ is a prime of good ordinary reduction of $J_0(\fn)$ then $\eta_\fp$ annihilates 
$\cT(\fn)_p$. 
\end{lem}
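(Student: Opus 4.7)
The idea is to combine two ingredients: the Eichler--Shimura congruence at $\fp$, and the fact that, under ordinariness, specialisation at $\fp$ is injective on $p$-primary torsion.

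Let $F_\fp$ be the completion of $F$ at $\fp$ with ring of integers $\cO_\fp$, and let $\cJ$ be the N\'eron model of $J_0(\fn)$ over $\Spec \cO_\fp$, which is an abelian scheme by the good-reduction hypothesis. The N\'eron mapping property gives $\cT(\fn)_p \subset J_0(\fn)(F_\fp) = \cJ(\cO_\fp)$, so each $P \in \cT(\fn)_p$ has a specialisation $\bar P \in \cJ_{\F_\fp}(\F_\fp)$. The first step I would carry out is to show that $P \mapsto \bar P$ is injective on $\cT(\fn)_p$. The kernel of specialisation is $\widehat{\cJ}(\fm_\fp)$, the $\fm_\fp$-valued points of the formal group. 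Under ordinary reduction, the connected part of the $p$-divisible group $\cJ[p^\infty]$ over $\cO_\fp$ is of multiplicative type; passing to the strict Henselisation if needed, $\widehat{\cJ}$ becomes a $g$-fold product of formal multiplicative groups, where $g = \dim J_0(\fn)$. In equicharacteristic $p$ the identity $(1+x)^{p^n} = 1 + x^{p^n}$, together with the fact that $\cO_\fp$ is a reduced DVR, forces $(1+\fm_\fp)[p^\infty] = 0$, and hence $\widehat{\cJ}(\fm_\fp)[p^\infty] = 0$, giving the desired injectivity.

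The second step is the Eichler--Shimura congruence: as endomorphisms of $\cJ_{\F_\fp}$ for $\fp \nmid \fn$, one has $T_\fp = F_\fp + V_\fp$, where $F_\fp$ is the $|\fp|$-power geometric Frobenius and $V_\fp$ its dual isogeny, satisfying $V_\fp F_\fp = |\fp|$. This is the function-field analogue of the classical relation and follows from the modular-correspondence description of $T_\fp$ recalled in Section~\ref{subsection-HC}. Every $\F_\fp$-rational point $\bar P$ is fixed by $F_\fp$, so $V_\fp \bar P = V_\fp F_\fp \bar P = |\fp| \bar P$, and therefore $T_\fp \bar P = (|\fp|+1)\bar P$. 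Applying this to $\bar P$ for $P \in \cT(\fn)_p$ shows that $\eta_\fp P$ lies in the kernel of specialisation, and the injectivity established above forces $\eta_\fp P = 0$ in $\cT(\fn)_p$.

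The conceptually substantive step is the first one: the injectivity of specialisation on $p$-primary torsion is precisely what would fail without the ordinary hypothesis, and it is what makes the argument go through. I do not anticipate serious technical obstacles, since both the structure of the formal group under ordinary reduction and the Eichler--Shimura relation for Drinfeld modular curves at primes of good reduction away from the level are standard.
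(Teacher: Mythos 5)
Your proof is correct and follows essentially the same route as the paper: injectivity of the specialisation map on $\cT(\fn)_p$ forced by ordinary reduction, combined with the Eichler--Shimura relation at $\fp$ and the fact that Frobenius fixes $\F_\fp$-rational points. The only (harmless) differences are that you justify the injectivity in more detail via the multiplicative formal group, and you phrase Eichler--Shimura as $T_\fp=F_\fp+V_\fp$ rather than the equivalent quadratic relation $\Frob_\fp^2-T_\fp\Frob_\fp+\abs{\fp}=0$ used in the paper.
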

\begin{proof}
    By assumption, $\cJ_0(\fn)_{\F_\fp}(\overline{\F}_\fp)[p^s] \cong (\Z/p^s\Z)^g$ 
    for all $s\geq 1$, where $g=\dim J_0(\fn)$. This implies that the reduction 
    $\cT(\fn)_p \to \cJ_0(\fn)_{\F_\fp}(\F_\fp)$ is injective. Let $\Frob_\fp$ 
    denote the Frobenius endomorphism of the abelian variety $\cJ_0(\fn)_{\F_\fp}$. 
    Since $\fp\nmid \fn$, the Hecke operator $T_\fp$ satisfies the Eichler-Shimura relation,
    $$
\Frob_\fp^2-T_\fp\cdot \Frob_\fp+\abs{\fp} =0,  
    $$
    in the endomorphism ring of $\cJ_0(\fn)_{\F_\fp}$. Since $\Frob_\fp$ fixes the reduction 
    of $\cT(\fn)_p$, the endomorphism $1-T_\fp+\abs{\fp}$ annihilates this group. As the reduction 
    map commutes with the action of the Hecke algebra, we get that $\eta_\fp$ annihilates $\cT(\fn)_p$. 
\end{proof}

\begin{prop}
    $J_0(\fn)$ has good ordinary reduction at all but finitely many places of $F$.  
\end{prop}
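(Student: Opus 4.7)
My plan proceeds in two steps. First, I would show that $J_0(\fn)$ has good reduction at all places of $F$ outside the finitely many primes dividing $\fn$ together with $\infty$. This follows directly from the moduli interpretation of $X_0(\fn)$: it admits a smooth projective model over $\Spec(A[1/\fn])$ coming from the compactified moduli scheme of rank-$2$ Drinfeld modules with $\Gamma_0(\fn)$-level structure, and hence $J_0(\fn)$ extends to an abelian scheme over this base.

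Second, for ordinariness at all but finitely many of these good primes, I would invoke Grothendieck's specialization theorem for Newton polygons of $F$-crystals. Over the one-dimensional base $\Spec(A[1/\fn])$ in characteristic $p$, the function $\fp \mapsto \NP(\cJ_0(\fn)^0_{\F_\fp})$ is lower semi-continuous, so the generic Newton polygon is attained on a Zariski-open subset whose complement is a finite set of closed points. It therefore suffices to verify that the generic Newton polygon of $J_0(\fn)$ is ordinary, which by semi-continuity reduces to exhibiting a single good prime $\fp_0$ at which $J_0(\fn)$ has ordinary reduction.

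To produce such $\fp_0$, I would combine the Eichler-Shimura congruence (as used in the proof of the previous lemma) with an explicit analysis of the Hecke action. At any good $\fp$, the characteristic polynomial of $\Frob_\fp$ on the $\ell$-adic Tate module ($\ell \neq p$) factors as $\prod_f (X^2 - a_f(\fp) X + |\fp|)$, where $f$ runs over Hecke eigenforms in $\cH_0(\fn, \overline{\Q})$; ordinariness of the reduction at $\fp_0$ is equivalent to each $a_f(\fp_0)$ being a unit at every prime above $p$ in the integer ring generated by its eigenvalues. One may attempt to establish this either by a direct computation on a basis of $\cH_0(\fn, \Z)$ for a small prime $\fp_0$, or via a function-field analog of Hida's theory of the ordinary Hecke algebra, showing that the ordinary quotient of $\T(\fn)$ is nonzero.

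\textbf{Main obstacle.} The semi-continuity machinery is standard; the hard part is the final step, namely ruling out the pathological possibility that the generic Newton polygon of $J_0(\fn)$ is everywhere non-ordinary. This requires nontrivial control on the mod-$p$ reduction of the Hecke action on $\cH_0(\fn, \Z)$ --- one must certify that, for at least one good prime, no factor $X^2 - a_f(\fp_0)X + |\fp_0|$ of the Frobenius characteristic polynomial has both roots with positive $p$-adic valuation.
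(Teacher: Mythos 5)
Your skeleton --- good reduction away from $\fn$ and $\infty$, Grothendieck specialization of Newton polygons over $\Spec(A[1/\fn])$, and the reduction of generic ordinarity to ordinarity at a single good prime $\fp_0$ --- is logically sound, but the argument has a genuine gap at exactly the point you label the main obstacle: you never actually produce such a $\fp_0$, nor verify generic ordinarity by any other means. Neither of your suggested routes closes this for arbitrary $\fn$: checking that the Hecke eigenvalues $a_f(\fp_0)$ are units at the primes above $p$ by direct computation on a basis of $\cH_0(\fn,\Z)$ can only treat specific small levels, and a function-field analogue of Hida theory guaranteeing that the ordinary quotient of $\T(\fn)$ is nonzero is not available and would itself be a substantial theorem (recall that the mod-$p$ behaviour of the Hecke action in this setting is poorly understood, as discussed in Section \ref{sDMF}). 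So as written the proof is conditional on an unproved input.

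The paper closes precisely this gap with a soft argument that avoids Hecke eigenvalues entirely: the generic fibre is ordinary because of the rigid-analytic uniformization at $\infty$. From the Gekeler--Reversat sequence \eqref{eqGRs} one has $J_0(\fn)(\Ci)\cong(\Ci^\times)^g/\La$ with $\La$ a lattice of rank $g=\dim J_0(\fn)$; since $\Ci^{\times}$ is $p$-divisible and has no $p$-torsion in characteristic $p$, the snake lemma gives $J_0(\fn)(\Ci)[p]\cong\La/p\La\cong(\Z/p\Z)^g$, so the $p$-rank of $J_0(\fn)$ equals $g$ and $J_0(\fn)$ is ordinary over $F$. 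Semicontinuity of the $p$-rank (via the Hasse--Witt matrix, or equivalently your Newton-polygon specialization, which says the same thing here) then gives good ordinary reduction at all but finitely many places, as in \cite[Proposition 8.3]{BoeckleCM}. If you replace your final step by this observation --- which uses the toric reduction at $\infty$, a feature with no analogue over $\Q$ but freely available in this setting --- the rest of your argument goes through.
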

\begin{proof}
    As we discussed, $J_0(\fn)(\Ci)\cong (\Ci^\times)^g/\La$, where $\La$ is a lattice of rank $g$. 
    Thus, $\dim_{\F_p} J_0(\fn)(\Ci)[p] = \dim_{\F_p}\La/p\La=g$, so 
    $J_0(\fn)$ is an ordinary abelian variety over $F$. Now 
    an argument using the Hasse-Witt matrix implies that $\cJ_0(\fn)$ has good ordinary reduction 
    at all but finitely many places of $F$; cf. \cite[Proposition 8.3]{BoeckleCM}. 
\end{proof}

\begin{example}
    \begin{enumerate}
        \item $J_0(T^s)$ has good ordinary reduction at all primes of $A$ different from~$T$; see \cite[Proposition 3.5]{GekelerDuke}. Thus, $\cT(T^s)_p$ is Eisenstein, so can be studied using the machinery of the Eisenstein ideal. Unfortunately, the standard method for computing the index of the Eisenstein ideal in $\T(\fn)$ relies on the Fourier expansions of harmonic cochains, which requires inverting $p$. 
        \item Let $q=2$ and $\fn=T(T^2+T+1)$. In this case, $J_0(\fn)$ is isogenous to a product 
        of two elliptic curves which can be analyzed using their explicit equations. One concludes that 
        $J_0(\fn)$ has supersingular reduction at $T+1$, and good ordinary reduction at any prime 
        of $A$ not equal to $T$, $T+1$, or $T^2+T+1$. In this case, $\cT(\fn)\cong \cC(\fn)\cong \T(\fn)/\fE(\fn)\cong\Z/15\Z$; cf. \cite{papikian_eisenstein_2016}. 
    \end{enumerate}
\end{example}

Let $J_0(\fn)^\new$ be the ``new" quotient of $J_0(\fn)$, i.e., the quotient of $J_0(\fn)$ by its abelian subvariety generated by the images $J_0(\fm)\to J_0(\fn)$ for all $\fm\supsetneq \fn$ under the morphisms induces by the degeneracy maps $X_0(\fn)\to X_0(\fm)$. 

\begin{prop}
Suppose $\fp$ is a place of good non-ordinary reduction for $J_0(\fn)^\new$. If the image of 
$\cT(\fn)_p$ in $J_0(\fn)^\new$ is non-zero, then $\cT(\fn)_p$ is not Eisenstein. 
\end{prop}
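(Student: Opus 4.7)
The plan is to derive a contradiction by assuming $\cT(\fn)_p$ Eisenstein and combining the Eichler--Shimura congruence with the non-ordinary hypothesis. Let $\overline{\cT}\neq 0$ denote the image of $\cT(\fn)_p$ in $J^\new:=J_0(\fn)^\new$. If $\cT(\fn)_p$ were Eisenstein, then so would $\overline{\cT}$, hence it would be annihilated by the image $\fE^\new\subset \T^\new$ of the Eisenstein ideal; in particular $T_\fp$ acts on $\overline{\cT}$ as $|\fp|+1 \equiv 1\pmod{p}$. Choose a maximal ideal $\fP \subset \T^\new$ containing $p$ and $\fE^\new$ with $\overline{\cT}[\fP] \neq 0$, so that $T_\fp \equiv 1$ and $|\fp|\equiv 0$ modulo $\fP$.

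Next I would apply the Eichler--Shimura congruence $\Frob_\fp^2 - T_\fp\Frob_\fp + |\fp| = 0$ in $\End\,\cJ^\new_{\F_\fp}$, and reduce it modulo $\fP$ on the $\fP$-localized $p$-divisible group $\cJ^\new_{\F_\fp}[\fP^\infty]$. It becomes $\Frob_\fp(\Frob_\fp - 1) \equiv 0 \pmod{\fP}$. On any biconnected (intermediate-slope) component of a $p$-divisible group in characteristic $p$, Frobenius reduces to a nonzero nilpotent operator, and such an operator can never be idempotent; hence no such component can contribute to the $\fP$-part. Therefore $\cJ^\new_{\F_\fp}[\fP^\infty]$ is ordinary: a direct sum of an étale piece (Frobenius $\equiv 1$) and a multiplicative piece (Frobenius $\equiv 0$), with no intermediate-slope summand.

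The non-ordinary hypothesis on $J^\new$ furnishes a nontrivial biconnected component of the full $p$-divisible group $\cJ^\new_{\F_\fp}[p^\infty]$. To conclude, I would argue that this biconnected piece lives in the $\fP$-localization --- this is the crucial compatibility step. The natural tool is the action of $\T^\new$ on the cotangent space $\Cot_0(\cJ^\new_{\F_\fp})$: the Eichler--Shimura relation there reads $T_\fp = V_\fp$ (since $F_\fp$ is purely inseparable and vanishes on cotangents), so non-ordinarity is equivalent to $T_\fp$ being a zero-divisor on $\Cot_0(\cJ^\new_{\F_\fp})$, hence $T_\fp$ lies in some maximal ideal $\fP'$ of $\T^\new\otimes\F_\fp$. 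If $\fP' = \fP$, then $T_\fp \equiv 0 \equiv 1 \pmod{\fP}$ is the sought contradiction.

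The hard part is precisely showing $\fP' = \fP$, i.e., localizing the non-ordinary behaviour at the Eisenstein prime. This amounts to a Gorenstein / multiplicity-one property for $\T^\new$ at Eisenstein primes --- the function-field analogue of what P\'al established for prime level in \cite{PalDoc}. In its absence, one may try to argue directly: every Eisenstein Hecke eigenform $f$ satisfies $a_\fp(f) \equiv 1 \pmod{p}$ and so is ordinary at $\fp$, while any non-ordinary simple factor of $J^\new$ contributes an eigenform with $a_\fp \equiv 0 \pmod{p}$, which is therefore non-Eisenstein. The challenge is then to force these two contributions into the same residual Galois/Hecke representation, thereby closing the argument.
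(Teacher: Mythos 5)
Your contradiction mechanism is the right one, and it is exactly the paper's: from a nonzero Eisenstein image in $J_0(\fn)^\new$ one gets a proper maximal ideal $\fM$ of $\T(\fn)^\new$ containing $p$ and the image of $\fE(\fn)$, hence containing $\eta_\fp=T_\fp-(\abs{\fp}+1)$, so $T_\fp\equiv 1$ modulo $\fM$; if non-ordinarity also puts $T_\fp$ in $\fM$, then $\abs{\fp}+1\in\fM$, and since $\abs{\fp}$ is a power of $p\in\fM$ one gets $1\in\fM$, a contradiction. The genuine gap is the step you yourself flag as ``the hard part'': you never show that the non-ordinary behaviour at $\fp$ is detected by the Eisenstein maximal ideal, i.e.\ that $T_\fp$ lies in $\fP$ itself rather than only in some other maximal ideal $\fP'$ above $p$ (which is all that the zero-divisor argument on $\Cot_0(\cJ^\new_{\F_\fp})$ gives). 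As written, the proof is therefore incomplete. Worse, the two rescue routes you propose are unpromising here: a Gorenstein/multiplicity-one property for the Hecke algebra at Eisenstein primes is precisely what the paper expects to \emph{fail} for composite $\fn$ (by analogy with the classical situation for $N=p_1p_2$), and your ``direct'' eigenform argument is, by your own admission, not closed.

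The paper does not need any of this machinery: it simply invokes a theorem of B\"ockle, \cite[Theorem B.13(b)]{BoeckleCM}, which in this setting converts the hypothesis of good non-ordinary reduction of $J_0(\fn)^\new$ at $\fp$ into the containment $T_\fp\in\fM$, and then runs the three-line ideal-theoretic contradiction described above. So the missing input in your proposal is exactly this external result, not a Gorenstein or multiplicity-one statement; and once it is available, your second paragraph (the analysis of the $\fP$-part of the $p$-divisible group being \'etale-plus-multiplicative via Eichler--Shimura) is superfluous, since no geometry of the $p$-divisible group beyond the containment $T_\fp\in\fM$ is used.
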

\begin{proof}
    If $\cT(\fn)_p$ is Eisenstein and its image in $J_0(\fn)^\new$ is non-zero, then 
    in the quotient $\T(\fn)^\new$ by which $\T(\fn)$ acts on $J_0(\fn)^\new$ there 
    is a proper maximal ideal $\fM$ containing $p$ and the image of $\fE(\fn)$. In particular,  
    $\eta_\fp\in \fM$. On the other hand, because of the non-ordinary reduction assumption, $T_\fp\in \fM$ by \cite[Theorem B.13 (b)]{BoeckleCM}. This implies that $\abs{\fp}+1\in \fM$. Since $p\in \fM$, 
    we get $1\in \fM$, a contradiction. (This type of arguments also appear in the proof of Theorem~1.2 in \cite{PalIJNT}.)
\end{proof}

\bibliographystyle{amsalpha}
\bibliography{bibliography.bib}

\end{document}